\newcommand{\aquatre}{%
\stockaiv\pageaiv%
\settypeblocksize{22cm}{13cm}{*}%
\setlrmargins{4cm}{*}{1}%
\setmarginnotes{0pt}{0pt}{0pt}%
\setulmargins{3.5cm}{*}{1}%
\setheadfoot{3\baselineskip}{3\baselineskip}%
\setheaderspaces{2\baselineskip}{*}{1}%
\checkandfixthelayout%
}
\newcommand{\addpoint}[1]{#1\ ---\ }
\newtheoremstyle{thm}%
     {1.5ex plus .3ex minus .1ex}%
     {1ex plus .3ex minus .1ex}%
     {\itshape}%
     {}%
     {\sffamily}%
     {---}%
     {0em}%
     {$\bullet$\hbox{\ }#1\hbox{\ }#2}%
\theoremstyle{thm}
\newtheorem{definition}{Definition}[section]
\newtheorem{theorem}[definition]{Theorem}
\newtheorem{lemma}[definition]{Lemma}
\newtheorem{proposition}[definition]{Proposition}
\newtheorem{corollary}[definition]{Corollary}
\newtheoremstyle{note}%
     {1ex plus .3ex minus .1ex}%
     {1ex plus .3ex minus .1ex}%
     {}%
     {}%
     {\itshape}%
     {.}%
     {1em}%
     {}%
\theoremstyle{note}
\newtheorem{remark}[definition]{Remark}
\newlength{\remaining}
\newenvironment{intermediate}[1]
	{\par\medskip\noindent\makebox[2em][t]{\hfill#1\hfill}%
          \setlength{\remaining}{\textwidth}\addtolength{\remaining}{-2em}%
          \begin{minipage}[t]{\remaining}\itshape
          }
          {\end{minipage}\par\medskip}
\newcommand{\labelu}[1]{\POS[]\POS!U\drop{\raisebox{1em}{\strut$#1$}}}
\newcommand{\labelr}[1]{\POS[]\POS!R\drop{\strut\hspace{2ex}\rlap{$#1$}}}
\newcommand{\verticale}{\ar@{--}[d]}
\newcommand{\tr}[1]{*+[F]{#1}}
\newsavebox{\labelz}\newsavebox{\labelun}
\savebox{\labelz}{$\xymatrix@M=0.2ex{*++[F]{0}}$}
\savebox{\labelun}{$\xymatrix@M=0.2ex{*++[F]{1}}$}
\def\rebar#1{\expandafter\def\csname #1bar\endcsname{\overline{\csname
      #1\endcsname}}}		%
\newcommand{\M}{{\mathcal{M}}}
\newcommand{\FFF}{\mathfrak{F}}
\newcommand{\bbR}{\mathbb{R}}
\newcommand{\bbZ}{\mathbb{Z}}
\newcommand{\C}{\mathscr{C}}
\renewcommand{\SS}{\mathcal{S}}
\DeclareMathOperator{\Id}{Id}
\newcommand{\DSC}{{\text{\normalfont\sffamily DSC}}}
\newcommand{\ADSC}{\text{\normalfont\sffamily ADSC}}
\newcommand{\MCSC}{{\text{\normalfont\sffamily MCSC}}}
\newcommand{\up}[1]{\,\uparrow #1}
\newcommand{\card}{\#}
\newcommand{\Cstar}{\mathfrak{C}}
\newcommand{\slgb}{\mbox{$\sigma$-al}\-ge\-bra}
\newcommand{\BM}{\partial\M}
\newcommand{\Mtilde}{\widetilde{\M}}
\newcommand{\nutilde}{\widetilde{\nu}}
\newcommand{\Ttilde}{\widetilde{T}}
\newcommand{\vd}{\varepsilon}
\newcommand{\mutilde}{\widetilde{\mu}}
\newcommand{\Gtilde}{\widetilde{G}}
\newcommand{\rtilde}{\widetilde{r}}
\newcommand{\Sigmatilde}{\widetilde{\Sigma}}
\newcommand{\tq}{\;\big|\;}
\newcommand{\tqs}{\,\colon\,}
\newcommand{\ie}{\textsl{i.e.}}
\newcommand{\eg}{\textsl{e.g.}}
\newcommand{\un}{\mathbf{1}}
\newcommand{\height}{\tau}
\numberwithin{equation}{section}
\newcommand{\accessible}{accessible}
\newcommand{\Accessible}{Accessible}
\newcommand{\accessibility}{accessibility}
\begin{document}

\begin{center}
  {\huge\bfseries
  A spectral property for concurrent systems and some probabilistic applications}

\Large
\bigskip
Samy Abbes (\texttt{abbes@irif.fr})\\
{\normalsize Universit\'e de Paris --- IRIF (UMR 8243)}\\[1em]
\bigskip
Jean Mairesse (\texttt{jean.mairesse@lip6.fr})\\
{\normalsize CNRS --- LIP6 (UMR 7606)}\\[1em]
Yi-Ting Chen  (\texttt{yi-ting.chen@lip6.fr)}\\
{\normalsize Sorbonne Universit\'e --- LIP6 (UMR 7606)}\\
[1em]
\bigskip

December 2019--April 2020
\end{center}

\bigskip

\begin{abstract}
  We study trace theoretic concurrent systems. This setting encompasses safe (1-bounded) Petri nets. We introduce a notion of irreducible concurrent system and we prove the equivalence
 between irreducibility and a ``spectral property''. The spectral property states a strict inequality between radii of convergence of  certain growth series associated with the system. The proof that we present relies on Analytic combinatorics techniques. The spectral property is the cornerstone of our theory, in a framework where the Perron-Frobenius theory does not apply directly. This restriction is an inherent difficulty in the study of concurrent systems.

  We apply the spectral property to the probabilistic theory of concurrent systems. We prove on the one hand the uniqueness of the uniform measure, a question left open in a previous paper. 
 On the other hand,  we prove that this uniform measure can be realized as a Markov chain of states-and-cliques on a state space that can be precisely characterized. 
\end{abstract}

\section{Introduction}
\label{sec:introduction}

A trace monoid is a presented monoid of the form $\M=\langle\Sigma\;|\; ab=ba\text{ for $(a,b)\in I$}\rangle$ where $\Sigma$ is a finite alphabet and $I$ is an irreflexive and symmetric relation on~$\Sigma$.  From the concurrency theory viewpoint, elements of $\Sigma$ represent actions and pairs of actions $(a,b)\in I$ correspond to \emph{concurrent} actions. The concurrent actions of a pair $(a,b)\in I$ are such that the two successive occurrences $ab$ and $ba$ have the same effects, which corresponds to the identity $ab=ba$ in the quotient monoid~$\M$.

The elements of a trace monoid are called \emph{traces}. Trace monoids are already rich models of concurrency, used for instance for models of databases with parallel access~\cite{diekert90}. However trace monoids alone lack another feature essential for real life models: a notion of \emph{state}. Therefore, we enrich the trace monoid models by considering a monoid action $X\times \M\to X$ of a trace monoid $\M$ on a finite set of states~$X$. This yields a powerful model with two key ingredients: a build-in notion of concurrency \emph{and} a notion of state. How does the ``current state'' influence the future dynamics of the system? This is adequately rendered by introducing a particular sink state, which is to be avoided by all trajectories. In other words, an action leading to the sink state is actually a forbidden action, and this might depend on the current state. This setting encompasses popular concurrency models such as safe (a.k.a.,~$1$-bounded) Petri nets~\cite{desel04}.

\paragraph*{Dynamics of trace monoids and of concurrent systems.}
\label{sec:dynam-trace-mono}

Concurrent systems can be given a dynamics which naturally arises as a byproduct of their combinatorics~\cite{abbes15}. Hence, in order to equip concurrent systems with their ``natural'' dynamics, a first task is to extend to concurrent systems well known results on the combinatorics of trace monoids \cite{cartier69,viennot86,diekert90,diekert95}, a task already investigated in~\cite{abbes19:_markov}. We first recall the basic facts on how the combinatorics of a trace monoid induces a dynamics on traces, and then discuss their extension to concurrent systems.

Two objects collect most information on the combinatorics of traces. The first one is the M\"obius polynomial $\mu_\M(z)$ of the monoid, the definition of which is recalled in Section~\ref{sec:trace-monoids}. The second one is the \emph{digraph of cliques} of the monoid. Traces admit a normal form, similar to the Garside normal form for braids. Normal forms of traces correspond bijectively to the paths of the digraph of cliques. The trace monoid is \emph{irreducible} if the graph $\bigl(\Sigma,(\Sigma\times\Sigma)\setminus I\bigr)$ is connected, and in this case, the digraph of cliques is strongly connected and aperiodic. The \emph{growth rate} of the monoid, defined as the exponential rate of its growth series, is given by the inverse~$r^{-1}$, where $r$ is the unique root of smallest modulus of the M\"obius polynomial.


The compactification of the trace monoid $\M$ by its \emph{boundary at infinity} $\BM$ provides a topological space carrying a unique uniform measure, which extends the uniform Bernoulli measure on infinite sequences of letters~\cite{abbes15}. The characterisation of this measure involves again the root~$r$. Furthermore, this uniform measure can be realized by a Markov chain on the digraph of cliques whose parameters admit a combinatorial description. 

The extension to concurrent systems of the different elements introduced above for trace monoids goes as follows. The M\"obius polynomial has to be replaced by a matrix polynomial, the \emph{M\"obius matrix}. The root $r$ becomes the root of smallest modulus of the determinant of the M\"obius matrix. 
The uniform measure can still be realized by a Markov chain but the digraph of cliques has to be replaced by the digraph of \emph{states-and-cliques} (\DSC), 
 which describes the combinatorics of traces \emph{and}  the action of the monoid. Quite surprinsingly, the \DSC\ has no reason to be strongly connected. Whereas, a cornerstone of trace monoids theory is that the digraph of cliques of an irreducible trace monoid is strongly connected and aperiodic.

Hence, the notion of irreducibility for concurrent systems is one of the keys for understanding their dynamics. And, once a correct notion of irreducibility is identified, one has to accept the fact that it does not imply the strong connectedness of the \DSC---which, incidently, prevents from a straightforward application of Perron-Frobenius theory. The interplay between an ``irreducibility'' of a concurrent system, the structure of its \DSC, and the dynamics it induces, is thus more involved than for trace monoids alone and deserves a detailled study.

\paragraph*{Targeted applications.}
\label{sec:targ-appl}

Different research fields may have an interest in a theory of probabilistic concurrent systems, and for different reasons. The probabilistic model checking of concurrent systems lacks such a theory. Instead, computer scientists rely on \emph{sequential} probabilistic models including nondeterminism to take into account concurrency~\cite{baier08,kwiatkowska12}, which are not well suited for all situations. Hence a trace theory of probabilistic concurrent systems fills a gap in discrete events systems theory with applications in probabilistic simulation and in probabilistic model checking for real life models. The detailed study of the uniform measure is of particular interest in this respect.
%
%

\paragraph*{Description of the results.}
\label{sec:description-results}

As mentioned above, a first milestone is to find an adequate notion of irreducibility for concurrent systems. Our notion of irreducibility is introduced  in Section~\ref{sec:concurrent-systems}, it asks for the graph of the action to be strongly connected and for the trace monoid to be irreducible. The spectral property, that we now describe, confirms that this is indeed an interesting definition. 
A concurrent system, resulting from the action of a trace monoid $\M(\Sigma,I)$ on a set of states~$X$, has the ``spectral property'' if: for any action $a\in\Sigma$, the new concurrent system obtained by restriction after removing the action $a$ is ``significantly smaller'' than the original concurrent system. The latter notion is rigorously formalized through the notion of radius of convergence of the growth series of the system. We show in Theorem~\ref{thr:1} that irreducible concurrent systems have the spectral property.


This spectral property is, of course, also true for irreducible trace monoids. One way to prove it for trace monoids is to use the Perron-Frobenius theory for primitive matrices.
But the straightforward application of the Perron-Frobenius theory is unavailable for concurrent systems since their \DSC\ is not strongly connected in general. Instead, we rely on other techniques, namely on an \emph{ad hoc} construction with the help of elementary, yet powerful tools from Analytic combinatorics. 

Based on the spectral property that we prove for irreducible concurrent systems, we are able to further investigate the structure of the $\DSC$. In particular, we introduce the notion of positive and of null node of the $\DSC$, and we show that null nodes can be safely removed without affecting, asymptotically, the combinatorics of the system. The existence of null nodes is both a difficulty and a specificity of concurrent systems; they are absent from irreducible trace monoids theory.

The combinatorics results that we obtain have natural applications in the theory of probabilistic concurrent systems. We develop the probabilistic material previously introduced in~\cite{abbes19:_markov}, where a notion of uniform measure for concurrent systems was constructed, in the light of our new results. In particular, we prove the uniqueness of the uniform measure, a result which was left open in \cite{abbes19:_markov}. Furthermore, 
we show that the associated Markov chain of states-and-cliques only visits ``positive nodes'' of the $\DSC$. This result provides a natural probabilistic interpretation of our combinatorial results. 

\paragraph*{Uniform measure and uniform measures.}
\label{sec:unif-meas-unif}

The definition of uniform measure adopted in this paper may seem arbitrary: it is defined as a probability measure on the boundary at infinity of the trace modnoid of the form $\mu_\alpha(x)=s^{|x|}\Gamma(\alpha,\alpha\cdot x)$ for some real $s>0$ and some positive \emph{cocycle}~$\Gamma$, \ie, a positive function $\Gamma: X\times X\to\bbR_{>0}$ satisfying $\Gamma(\alpha,\gamma)=\Gamma(\alpha,\beta)\Gamma(\beta,\gamma)$ for all states~$\alpha,\beta,\gamma$ (see Section~\ref{sec:appl-spectr-prop}).

Other definitions might be considered. For instance, one could consider for each non negative integer~$k\geq0$ and for each initial state~$\alpha\in X$, the set of executions of length~$k$ starting from~$\alpha$, and the uniform distribution $\mu_{\alpha,k}$ on this finite set; then consider the weak limit of the family~$(\mu_{\alpha,k})_{k\geq0}$. Such a limit, if it exists, is a legitimate candidate for a notion of \emph{uniform meaure at infinity}.

It turns out that both notions actually coincide. The proof of this result involves a general study of the uniform distributions on paths in general finite graphs, a topic outside the scope of this paper. The formulation of convergence results in the framework of general graphs is the topic of a separate paper currently in preparation. In the present paper, a glimpse to this aspect of the theory is introduced in Section~\ref{sec:uniq-unif-meas}, with a reference to a result from the literature on reducible non negative matrices.

\section{Preliminaries}
\label{sec:preliminaries}

We shall use throughout the paper the following notions and elementary properties. A \emph{digraph} is a pair $(N,E)$ where $N$ is a finite set, called the set of \emph{nodes}, and~$E$, the set of \emph{arcs}, is a subset of $N\times N$. A \emph{graph} is a digraph $(N,E)$ where $E$ is symmetric, \ie, $(x,y)\in E\implies (y,x)\in E$. For brevity, we sometimes denote graphs and digraphs by their sets of nodes, the sets of arcs being understood and conventionally denoted by~$E$.

A \emph{path} in a digraph $N$ is a sequence $(x_1,\dots,x_p)$ of nodes, maybe empty, such that $(x_i,x_{i+1})\in E$ for $i=1,\dots,p-1$. The \emph{length} of the path is the integer~$p$. In particular, single nodes correspond to paths of length~$1$, and the empty path is the unique path of length~$0$. We say that the path $(x_1,\dots,x_p)$ \emph{leads from $x_1$ to~$x_p$}. If $x$ and $y$ are two nodes, we write $x\to^* y$ if there exists a path leading from $x$ to~$y$.

A digraph $N$ is \emph{strongly connected} if $x\rightarrow^* y$ for every two nodes $x$ and~$y$. If $N$ is a digraph and if $N'$ is a subset of~$N$, the digraph \emph{induced by~$N'$} is $\bigl(N',E\cap(N'\times N')\bigr)$. We simply denote it by~$N'$. A \emph{subgraph} of $N$ is any digraph induced by some subset of~$N$. A \emph{strongly connected component} of $N$ is any maximal strongly connected subgraph of~$N$, where the maximality is understood with respect to inclusion. If $N_1,\dots, N_p$ are the strongly connected components of~$N$, then the collection of sets $\{N_1,\dots,N_p\}$ is a partition of the set~$N$.

Let $S=\{N_1,\dots,N_p\}$ be the collection of strongly connected components of a digraph~$N$. Then for all $i,j\in\{1,\dots,p\}$ and for all nodes $x,x'\in N_i$ and $y,y'\in N_j$, one has: $x\to^* y\iff x'\to^*y'$. We write $N_i\preceq N_j$ if there are nodes $x\in N_i$ and $y\in N_j$ such that $x\to^*y$. This relation is a partial order on~$S$. We say that $N_j$ is \emph{terminal} if $N_j$ is maximal in~$(S,\preceq)$. Since we assume $N$ to be finite, $S$~is itself finite and non empty if $N\neq\emptyset$. Therefore, in this paper, non empty digraphs always have at least one terminal strongly connected component.

The \emph{spectral radius} $\rho(A)$ of a real or complex matrix $A$ is the maximal modulus of its complex eigenvalues. A \emph{nonnegative matrix} is a real matrix, the entries of which are all non negative. We write $A\leq B$ for two matrices of the same size if $B-A$ is nonnegative.

Let $(N,E)$ be a digraph. Its \emph{adjacency matrix} is the $\{0,1\}$-matrix $F$ indexed by~$N$, and with $F_{x,y}=\un\bigl((x,y)\in E\bigr)$, where, for some predicate~$P$, we denote by $\un(P)$ the characteristic function of~$P$. For $z$ a node, let $\un_z$ denote the vector indexed by~$N$ and defined by $\un_z(x)=\un(x=z)$, and let $\un'_z$ denote its transpose vector. Then, for every integer $n\geq1$, and for every two nodes $x$ and~$y$, the number $\lambda_{x,y}(n)$ of paths of length $n$ leading from $x$ to $y$ is given by:
\begin{gather}
  \label{eq:15}
  \lambda_{x,y}(n)=\un'_x F^{n-1}\un_y\,,
\end{gather}
after identification of the above $1\times1$ matrix with its unique entry.

The \emph{spectral radius} $\rho(N)$ of a digraph $N$ is the spectral radius of its adjacency matrix. For each pair $(x,y)\in N\times N$, let $G_{x,y}(z)$ be the series:
\begin{gather*}
  G_{x,y}(z)=\sum_{n\geq0}\lambda_{x,y}(n)z^n,
\end{gather*}
of radius of convergence $r_{x,y}\in(0,+\infty]$. We define:
\begin{gather*}
r(N)=\min\{r_{x,y}\tqs(x,y)\in N\times N\}  
\end{gather*}
which is a positive number, maybe equal to~$+\infty$. Alternatively, $r$ is the radius of convergence of the matrix series $\sum F^nz^n$.

Next proposition is a folklore result. The first part is the classical result which relates the spectral radius of an operator $A$ with the radius of convergence of the series $\sum A^nz^n$. The second part is based on the strong results of the Perron-Frobenius theory for irreducible matrices, see for instance~\cite{seneta81}. 

\begin{proposition}
  \label{prop:6}
  Let $N$ be a digraph, and let $r=r(N)$ and $\rho=\rho(N)$. Then $r=\rho^{-1}$ if $\rho\neq0$ and $r=+\infty$ if $\rho=0$. Furthermore, there exists a node $x$ and a positive integer $d$ such that, for any positive multiple $d'$ of~$d$, the series $\sum_{n\geq0}\lambda_{x,x}(nd')z^n$ has $r^{d'}$ as radius of convergence.
\end{proposition}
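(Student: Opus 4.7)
The plan is to handle the two parts separately: the identity $r=\rho^{-1}$ follows from Gelfand's formula and the non-negativity of $F$, while the ``Furthermore'' part requires extracting a dominant strongly connected component and applying Perron-Frobenius there.

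For the first part, I would use Gelfand's formula $\rho = \lim_n \|F^n\|^{1/n}$ with the max-entry norm, which equals $\max_{x,y}(F^n)_{x,y}$ since $F\geq 0$. Because $\limsup$ commutes with a maximum indexed by a finite set, this rewrites as
\[
\rho \;=\; \max_{x,y}\ \limsup_n\, (F^n)_{x,y}^{1/n}.
\]
Applying Cauchy--Hadamard to each $G_{x,y}$, and using the relation $\lambda_{x,y}(n+1)=(F^n)_{x,y}$ from~\eqref{eq:15}, identifies $r_{x,y}^{-1}$ with this very $\limsup$ (the shift by one in the exponent is harmless for the $n$-th root). Hence $r(N)^{-1}=\max_{x,y} r_{x,y}^{-1}=\rho$, with the convention $r=+\infty$ when $\rho=0$.

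For the second part, I would decompose $N$ into its strongly connected components $N_1,\dots,N_p$ and put $F$ in block upper-triangular form via a topological ordering consistent with the partial order~$\preceq$ from the preliminaries; the diagonal blocks $F_i$ are then the adjacency matrices of the induced subgraphs~$N_i$, so $\rho(N)=\max_i \rho(N_i)$. Pick an SCC $N^*$ attaining this maximum and any node $x\in N^*$. Any closed walk from $x$ to~$x$ must remain inside $N^*$, so $\lambda_{x,x}(n)$ depends only on $F^*:=F|_{N^*\times N^*}$, an irreducible non-negative matrix. Perron--Frobenius then provides a period $d\geq1$ such that, for any positive multiple $d'=kd$, the restriction of $(F^*)^{d'}$ to each cyclic class is primitive with spectral radius $\rho^{d'}$. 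The limit theorem for primitive matrices yields $\lambda_{x,x}(nd')^{1/n}\to \rho^{d'}$ as $n\to\infty$, and Cauchy--Hadamard gives the radius of convergence $\rho^{-d'}=r^{d'}$. When $\rho=0$ the graph is acyclic, $\lambda_{x,x}(n)$ vanishes for $n$ large, and any $x$ together with $d=1$ produces a polynomial series whose radius $+\infty$ coincides with $r^{d'}$.

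The main technical point will be the reduction $\rho(N)=\max_i \rho(N_i)$, which rests on the block-triangular structure of~$F$ after topological reordering of the SCCs. Once this is in hand, the rest is a routine invocation of classical Perron--Frobenius results; the subtlety to watch is the role of the period~$d$, since without restricting to a cyclic class the power $(F^*)^{d'}$ is only irreducible (not primitive), and the clean asymptotic giving $r^{d'}$ could degenerate on the ``wrong'' diagonal entries.
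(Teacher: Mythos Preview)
Your strategy matches the paper's sketch (Gelfand for the first claim, Perron--Frobenius on a basic strongly connected component for the second; the paper gives no detailed proof). Part~1 is fine. In Part~2, however, there is a genuine gap caused by the paper's convention that the length of a path is its number of \emph{nodes}, so that $\lambda_{x,x}(m)=(F^{m-1})_{x,x}$. If the basic component $N^*$ you select has period $d>1$, then for every multiple $d'$ of $d$ and every $n\geq1$ the exponent $nd'-1$ is not a multiple of~$d$, hence $\lambda_{x,x}(nd')=((F^*)^{nd'-1})_{x,x}=0$; the series is a constant with radius $+\infty\neq r^{d'}$. Your remark that the shift by one ``is harmless for the $n$-th root'' is correct in Part~1, where only a $\limsup$ over all $n$ is needed, but here the shift moves you entirely off the residue class modulo~$d$ on which the diagonal entries of powers of $F^*$ are positive. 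The $2$-cycle $a\to b\to a$ already shows that no choice of $x$ and $d$ rescues the statement as written.

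This is in fact a defect of the proposition's formulation rather than of your method: the paper's own downstream use (proof of Lemma~\ref{lem:1}) silently works with $(F^{nd'})_{u,u}$ in place of $\lambda_{u,u}(nd')$. With that corrected formulation your argument goes through verbatim: take $d$ to be the period of~$N^*$, so that $(F^*)^{d'}$ restricted to the cyclic class of $x$ is primitive with spectral radius~$\rho^{d'}$, and the asymptotic $((F^*)^{nd'})_{x,x}\sim c\,\rho^{nd'}$ gives radius $r^{d'}$ by Cauchy--Hadamard.
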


\begin{remark}
  \label{rem:2}
  If $G(z)=\sum_{n\geq0}a_nz^n$ is a rational series with radius of convergence $r<\infty$, and if all the coefficients $a_n$ are non negative, then it is well known that $r$ is a pole of~$G(z)$. Indeed, it follows from Pringsheim's theorem \cite[Th.~IV.6]{flageolet09} that $r$ is a singularity of~$G(z)$, and for rational series, poles and singularities coincide.
\end{remark}

\section{Trace monoids and concurrent systems}
\label{sec:trace-mono-conc}

\subsection{Trace monoids}
\label{sec:trace-monoids}

A \emph{trace monoid} is a finitely generated monoid of the form
$$
\M=\langle\,\Sigma\tq ab=ba\quad\forall (a,b)\in I\rangle,
$$
where $\Sigma$ is a finite alphabet, and $I\subseteq\Sigma\times\Sigma$ is an irreflexive and symmetric relation on~$\Sigma$. We denote it by $\M=\M(\Sigma,I)$. Elements of $\Sigma$ are called \emph{letters} and elements of $\M$ are called \emph{traces}. We identify letters and their image in $\M$ through the canonical surjection $\Sigma^*\to\M$. More generally, according to the context, it should be clear whether a word $a_1\dots a_p$ represents an element of the free monoid $\Sigma^*$ or its image in~$\M$. The empty word and the unit element of $\M$ are both denoted~$\vd$. Elements of $\M$ distinct from $\vd$ are called \emph{non empty traces}. A trace is thus a congruence class of the free monoid~$\Sigma^*$, relatively to the smallest congruence containing all pairs of the form $(ab,ba)$, with $(a,b)\in I$. The \emph{length} of a trace~$x$, denoted by~$|x|$, is the length of any word representing~$x$. If $H$ is a subset of~$\Sigma$, we denote by $\langle H\rangle$ the submonoid of $\M$ generated by~$H$. Obviously: $\langle H\rangle=\M(H,I_H)$ with $I_H=(H\times H)\cap I$.

The alphabet $\Sigma$ is thought of as the alphabet of elementary actions of a system, some of them being concurrent. The concurrency of actions is encoded into the relation~$I$, which is called \emph{the independence relation}. Two actions $a$ and $b$ are concurrent if and only if $(a,b)\in I$, which translates as $ab=ba$ in the monoid. This identification characterizes the so-called \emph{trace models} or \emph{partial order models} of concurrency~\cite{diekert95}.

The \emph{dependence relation} on $\Sigma$ is the relation $D=(\Sigma\times\Sigma)\setminus I$. The \emph{Coxeter graph} of $\M$ is the graph $(\Sigma,D)$.  The monoid $\M$ is \emph{irreducible} if the graph $(\Sigma,D)$ is connected. Equivalently, $\M$~is irreducible if and only if, for every subsets $\Sigma_1,\Sigma_2\subseteq\Sigma$, if $\M$ is isomorphic to the direct product $\langle\Sigma_1\rangle\times\langle\Sigma_2\rangle$ then $\Sigma_1=\emptyset$ or $\Sigma_2=\emptyset$.

Much of the combinatorics results on trace monoids are based on the existence of a normal form for traces, which we describe now. A \emph{clique} of the monoid is a trace which can be written as a product of pairwise commuting letters, hence $c=a_1\ldots a_p$ with $(a_i,a_j)\in I$ for $i\neq j$.  This writing is unique up to the order of occurrences of the letters. Let $\C$ denote the set of cliques, and let $\Cstar=\C\setminus\{\vd\}$ be the set of non empty cliques. A pair $(c,c')$ of non empty cliques, where $c=a_1\cdot\ldots\cdot a_p$ and $c'=b_1\cdot\ldots\cdot b_q$ with $a_1,\dots,a_p,b_1,\dots,b_q\in\Sigma$, is said to be in \emph{normal form} if for every letter $b_j$ there exists a letter $a_i$ such that $(a_i,b_j)\notin I$. We denote this relation by $c\to c'$. A \emph{normal sequence} is a sequence $(c_1,\dots,c_h)$ of non empty cliques such that $c_i\to c_{i+1}$ holds for $i=1,\dots,h-1$. For every  non empty trace~$x$, there exists a unique positive integer $h$ and a unique normal sequence $(c_1,\dots,c_h)$ such that $x=c_1\cdot\ldots\cdot c_h$~\cite{cartier69}, \cite[Ch.~11]{lallement79}. This normal sequence is the \emph{(Cartier-Foata) normal form} of~$x$, and the integer $h$ is the \emph{height} of~$x$, which we denote by $h=\height(x)$. By convention, the normal form of the empty trace is the empty sequence, and $\height(\vd)=0$.

Consider the digraph with non empty cliques as nodes, and an arc from $c$ to $c'$ whenever $c\to c'$ holds. This is the \emph{digraph of cliques} of the monoid. Paths in this digraph correspond to normal sequences of non empty cliques. Their set is thus in bijection with~$\M$. If $\M$ is irreducible, then this digraph is strongly connected and aperiodic~\cite{krob03,abbes15}.

The growth series $G(z)$ of the trace monoid~$\M$, and the M\"obius polynomial $\mu(z)$ of~$\M$, are defined by:
\begin{align*}
  G(z)&=\sum_{n\geq0}\card\M(n)z^n \quad\text{with $\M(n)=\{x\in\M\tqs|x|=n\}$,}\\
\mu(z)&=\sum_{c\in\C}(-1)^{|c|}z^{|c|}.
\end{align*}

The series $G(z)$ is the formal inverse of~$\mu(z)$: $\mu(z)G(z)=1$ \cite{cartier69,viennot86}. In particular, it is a rational series.

The following results can be obtained as consequences of the above, combined with the Perron-Frobenius theory for primitive matrices~\cite{seneta81}. Assume that $\Sigma\neq\emptyset$. Then, among the complex roots of~$\mu(z)$, only one has smallest modulus~\cite{goldwurm00,krob03}. This root is real positive and lies in $(0,1]$. It is a simple root of $\mu(z)$ if $\M$ is irreducible~\cite{krob03}. This root is also the radius of convergence of~$G(z)$.

\subsection{Basics of concurrent systems}
\label{sec:concurrent-systems}

Recall that a monoid $M$ with unit element $\vd$ is said to \emph{act on the right} on a set $Y$ if there is a mapping $Y\times M\to Y$, denoted $(\alpha,y)\mapsto \alpha\cdot y$, satisfying the two following properties:
\begin{align*}
  \forall \alpha \in Y\quad\alpha\cdot\vd&=\alpha,&\text{and\quad}
                                                    \forall \alpha\in Y\quad\forall y,z\in M\quad(\alpha\cdot y)\cdot z=\alpha\cdot(yz).
\end{align*}

\begin{definition}
  \label{def:1}
  A \emph{concurrent system} is a triple $(\M,X,\bot)$, where $X$ is a finite set, $\bot$~is a distinguished symbol not in~$X$, and $\M$ is a trace monoid acting on $X\cup\{\bot\}$, such that $\bot\cdot x=\bot$ for all $x\in\M$.

  Elements of $X$ are called \emph{states}. A trace $x\in\M$ such that $\alpha\cdot x\neq\bot$ for some state~$\alpha$, is an \emph{execution of the system from~$\alpha$}, or simply an \emph{execution} if $\alpha$ is understood. The execution $x$ is said to \emph{lead} from $\alpha$ to the state~$\alpha\cdot x$.
  
  The concurrent system $(\M,X,\bot)$ is:
  \begin{itemize}
  \item \emph{Trivial} if:\quad$\forall(\alpha,x)\in X\times\M\quad \alpha\cdot x=\bot$; it is \emph{non trivial} otherwise.
  \item \emph{\Accessible} if:\quad $\forall (\alpha,\beta)\in X\times X\quad\exists x\in\M\quad \alpha\cdot x=\beta$.
  \item \emph{Alive} if:\quad for every state $\alpha$ and for every letter $a$ in the base alphabet of~$\M$, there exists an element $x\in\M$ with at least one occurrence of $a$ and such that $\alpha\cdot x\neq\bot$.
  \item \emph{Irreducible} if:\quad it is \accessible{} and alive and $\M$ is an irreducible trace monoid.
  \end{itemize}
\end{definition}

With each trace monoid, we canonically associate a concurrent system as follows.

\begin{definition}
  \label{def:4}
Let $\M$ be a trace monoid. Pick two distinct symbols $*$ and~$\bot$, and consider the unique monoid action of $\M$ on the singleton $X=\{*\}$, extended to $\bot\cdot x=\bot$ for all $x\in\M$. The concurrent system $\Mbar=(\M,X,\bot)$ thus defined is \emph{canonically associated with~$\M$}.
\end{definition}

The concurrent system $\Mbar$ associated with a trace monoid $\M$ is trivially \accessible{} and alive. Hence $\Mbar$ is irreducible as a concurrent system if and only if $\M$ is irreducible as a trace monoid.

\subsection{Combinatorics of concurrent systems}
\label{sec:comb-conc-syst}

\subsubsection{Definitions and notations}
\label{sec:defin-notat}

Given a concurrent system $(\M,X,\bot)$, we will use the following notations, defined for $\alpha,\beta$ ranging over~$X$, $n$~ranging over the non negative integers and where $z$ is a formal variable~:
  \begin{align*}
    \M_\alpha&=\{x\in\M\tqs \alpha\cdot x\neq\bot\}&\M_{\alpha,\beta}&=\{x\in\M_\alpha\tqs \alpha\cdot x=\beta\}\\
    \M_\alpha(n)&=\{x\in\M_\alpha\tqs|x|=n\}&    \M_{\alpha,\beta}(n)&=\{x\in\M_{\alpha,\beta}\tqs|x|=n\}\\
  G_{\alpha}(z)&=\sum_{n\geq0}\card\M_{\alpha}(n)z^n&
G_{\alpha,\beta}(z)&=\sum_{n\geq0}\card\M_{\alpha,\beta}(n)z^n\\
\C_{\alpha}&=\M_{\alpha}\cap\C&
\Cstar_\alpha&=\C_\alpha\setminus\{\vd\}\\
    \Sigma_\alpha&=\M_\alpha\cap\Sigma& \C_{\alpha,\beta}&=\M_{\alpha,\beta}\cap\C\\
\mu_{\alpha,\beta}(z)&=\sum_{c\in\C_{\alpha,\beta}}(-1)^{|c|}z^{|c|}
  \end{align*}

\begin{definition}
  \label{def:3}
Let $(\M,X,\bot)$ be a concurrent system. For each pair $(\alpha,\beta)\in X\times X$, we denote by $r_{\alpha,\beta}$ the radius of convergence of the series $G_{\alpha,\beta}(z)$, maybe equal to~$+\infty$. The number $r=\min\{r_{\alpha,\beta}\tqs(\alpha,\beta)\in X\times X\}$, maybe equal to~$+\infty$, is called the \emph{characteristic root} of the system.

  The matrix $\mu(z)=\bigl(\mu_{\alpha,\beta}(z)\bigr)_{(\alpha,\beta)\in X\times X}$ is the \emph{M\"obius matrix} of the concurrent system; the matrix $G(z)=\bigl(G_{\alpha,\beta}(z)\bigr)_{(\alpha,\beta)\in X\times X}$ is its \emph{growth matrix}.
\end{definition}

As an extension of the classical result for trace monoids recalled earlier, we have that the M\"obius matrix is the formal inverse of the growth matrix~\cite[Th.~5.10]{abbes19:_markov}:
\begin{gather}
  \label{eq:2}
\mu(z)G(z)=G(z)\mu(z)=\Id.  
\end{gather}
This formal identity translates as an identity in the algebra of real matrices if $z\in[0,r)$, where $r$ is the characteristic root of the system.

\begin{remark}
  \label{rem:1}
Let $(\M,X,\bot)$ be a concurrent system with $\M=\M(\Sigma,I)$.  Let $\Sigma=\Sigma^1+\Sigma^2$ be a partition of $\Sigma$ by two subsets such that $\Sigma^1\times\Sigma^2\subseteq I$; hence, any two actions of $\Sigma^1$ and of $\Sigma^2$ are concurrent. Let $\M^1=\langle\Sigma^1\rangle$ and $\M^2=\langle\Sigma^2\rangle$, and let $\mu^1(z)$ and $\mu^2(z)$ be the M\"obius matrices of the concurrent systems $(\M^1,X,\bot)$ and $(\M^2,X,\bot)$, where the action of $\M^i$ on $X\cup\{\bot\}$ is obtained by restriction of the action of~$\M$ on $X\cup\{\bot\}$ for $i=1,2$. Then for any two states $\alpha,\beta\in X$, the set $\C_{\alpha,\beta}$ can be written as follows, where ``$\sum$'' stands for the disjoint union: $\C_{\alpha,\beta}=\sum_{\gamma\in X}\bigl(\C_{\alpha,\gamma}^1\times\C_{\gamma,\beta}^2\bigr)$, where $\C_{\alpha,\gamma}^i=\C_{\alpha,\gamma}\cap\M^i$ for $i=1,2$. In turn, one obtains a product form for the M\"obius matrix: $\mu(z)=\mu^1(z)\mu^2(z)=\mu^2(z)\mu^1(z)$; in particular, $\mu^1(z)$~and $\mu^2(z)$ commute.
\end{remark}

The following result is elementary.

\begin{proposition}
  \label{prop:3}
  Let $(\M,X,\bot)$ be a non trivial and accessible concurrent system. Then its characteristic root $r$ satisfies $r<\infty$.
\end{proposition}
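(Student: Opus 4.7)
The plan is to exhibit, for some state $\alpha_0 \in X$, a non-empty trace $x$ satisfying $\alpha_0 \cdot x = \alpha_0$. Granted such an $x$, the iterates $(x^k)_{k \geq 1}$ are pairwise distinct elements of $\M_{\alpha_0, \alpha_0}$ (they have strictly increasing lengths $k|x|$), so the series $G_{\alpha_0, \alpha_0}(z) = \sum_n \#\M_{\alpha_0, \alpha_0}(n) z^n$ has infinitely many nonzero coefficients. Since these coefficients are nonnegative integers, one has $\limsup_n \#\M_{\alpha_0, \alpha_0}(n)^{1/n} \geq 1$, whence $r_{\alpha_0, \alpha_0} \leq 1$, and therefore $r \leq r_{\alpha_0, \alpha_0} \leq 1 < \infty$, which is the conclusion.

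To build the loop $x$, I combine the two hypotheses. Non-triviality produces a state $\alpha_0 \in X$ and a non-empty trace $x_0 \in \M$ with $\beta_0 := \alpha_0 \cdot x_0 \neq \bot$; up to replacing $x_0$ by one of its letters one may even take $x_0 = a \in \Sigma$ with $\alpha_0 \cdot a \neq \bot$. Accessibility applied to the pair $(\beta_0, \alpha_0) \in X \times X$ then supplies $y \in \M$ with $\beta_0 \cdot y = \alpha_0$. Setting $x = x_0 y$, the monoid action axiom gives $\alpha_0 \cdot x = (\alpha_0 \cdot x_0) \cdot y = \beta_0 \cdot y = \alpha_0$, while $|x| \geq |x_0| \geq 1$ ensures $x \neq \vd$. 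This $x$ is the required loop.

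Nothing in this argument is truly delicate; it amounts to concatenating a non-$\bot$ execution furnished by non-triviality with a return trace furnished by accessibility, and then iterating. The only point worth keeping in mind is that the word ``non-trivial'' must be read strongly enough to guarantee at least one non-empty execution escaping $\bot$: otherwise a one-state system with every letter killing would satisfy both hypotheses but yield $G_{\alpha_0, \alpha_0}(z) = 1$ and $r = +\infty$, contradicting the statement.
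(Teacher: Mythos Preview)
Your proof is correct and follows exactly the same route as the paper's: use non-triviality to get a letter $a$ with $\alpha_0\cdot a\neq\bot$, use accessibility to close the loop back to~$\alpha_0$, and iterate to force $r_{\alpha_0,\alpha_0}\leq1$. Your closing caveat about the reading of ``non-trivial'' is well taken---the paper's literal definition is satisfied whenever $X\neq\emptyset$ (since $\alpha\cdot\vd=\alpha$), so both arguments implicitly rely on the intended meaning that some \emph{non-empty} execution avoids~$\bot$.
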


\begin{proof}
  Since the system is non trivial, there exists a state $\alpha\in X$ and a letter $a$ such that $\alpha\cdot a\neq\bot$. Let $\beta=\alpha\cdot a$ and, since the system is accessible, let $x$ be an execution leading from $\beta$ to~$\alpha$. Put $p=|a x|$. Then $\M_{\alpha,\alpha}(kp)\geq1$ for every $k\geq0$, hence $r_{\alpha,\alpha}\leq1$ and thus $r<\infty$.
\end{proof}



The following result is proved in~\cite{abbes19:_markov}. We give a proof here for the sake of completeness.

\begin{proposition}
  \label{prop:4}
  Let $(\M,X,\bot)$ be a non trivial and accessible concurrent system of characteristic root~$r$. Then all growth series $G_{\alpha,\beta}(z)$ are rational series with same radius of convergence~$r$, and $r$ is a root of smallest modulus of the polynomial $\theta(z)=\det\mu(z)$. 
\end{proposition}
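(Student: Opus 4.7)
The plan rests on three ingredients: the formal matrix identity $\mu(z)G(z) = \Id$ from equation~\eqref{eq:2}, the cancellativity of trace monoids, and Pringsheim's theorem as packaged in Remark~\ref{rem:2}.

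For rationality, I would first evaluate at $z=0$: since the only clique of length zero is the empty trace, $\mu_{\alpha,\beta}(0) = \un(\alpha=\beta)$, so $\mu(0) = \Id$. Hence $\theta(z) = \det\mu(z)$ is a polynomial with constant term~$1$, and in particular nonzero as an element of $\mathbb{R}[z]$. Cramer's rule applied to $\mu(z)G(z) = \Id$ then yields $G_{\alpha,\beta}(z) = N_{\alpha,\beta}(z)/\theta(z)$ where $N_{\alpha,\beta}(z)$ is (up to sign) a cofactor of $\mu(z)$, hence a polynomial in~$z$. Every $G_{\alpha,\beta}(z)$ is therefore rational, and every singularity of $G_{\alpha,\beta}(z)$ is a root of~$\theta(z)$.

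Next, to see that all the radii $r_{\alpha,\beta}$ coincide, I would exploit accessibility. Given two pairs $(\alpha,\beta)$ and $(\gamma,\delta)$, accessibility provides traces $u,v\in\M$ with $\gamma\cdot u = \alpha$ and $\beta\cdot v = \delta$. The map $x\mapsto uxv$ sends $\M_{\alpha,\beta}$ into $\M_{\gamma,\delta}$ and raises length by the constant $k = |u|+|v|$. Because trace monoids are both left- and right-cancellative, this map is injective, so $\card\M_{\alpha,\beta}(n) \leq \card\M_{\gamma,\delta}(n+k)$, which forces $r_{\alpha,\beta} \geq r_{\gamma,\delta}$. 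By symmetry all radii agree, and their common value $r$ is finite by Proposition~\ref{prop:3}. This is the step where accessibility is used essentially, and I expect it to be the main conceptual obstacle, since the rest of the argument is just formal manipulation of the M\"obius identity combined with Pringsheim's theorem.

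Finally, to identify $r$ as a root of smallest modulus of $\theta(z)$, I would combine the two previous steps. Since $G_{\alpha,\beta}(z)$ is rational with nonnegative coefficients and finite radius of convergence~$r$, Remark~\ref{rem:2} ensures that $r$ is a pole of~$G_{\alpha,\beta}(z)$, so $\theta(r)=0$. For minimality, I would observe that on the open complex disk $\{|z|<r\}$ every entry of the matrix series $G(z)$ converges absolutely, so the formal identity $\mu(z)G(z) = \Id$ promotes to a genuine identity in the algebra of complex $X\times X$ matrices. This forces $\mu(z)$ to be invertible there and hence $\theta(z)\neq 0$ for $|z|<r$, completing the proof.
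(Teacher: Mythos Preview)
Your proposal is correct and follows essentially the same approach as the paper: rationality via Cramer's rule applied to the M\"obius identity, equality of radii via the injective map $x\mapsto uxv$ exploiting accessibility and cancellativity, and identification of $r$ as a minimal-modulus root of $\theta(z)$ via Pringsheim together with the matrix identity on $\{|z|<r\}$. The only cosmetic difference is that the paper concludes $\theta(r)=0$ by contradiction (if $\mu(r)$ were invertible then $G(z)$ would be bounded near~$r$), whereas you argue directly from the Cramer representation that a pole of $G_{\alpha,\beta}$ must be a zero of~$\theta$; these are the same argument phrased two ways.
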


\begin{proof}
  The inversion formula $G(z)\mu(z)=\Id$ recalled in~(\ref{eq:2}) shows, \emph{via} the determinant formula for the inverse of a matrix, that all growth series $G_{\alpha,\beta}(z)$ are rational.

  Since the growth series $G_{\alpha,\beta}(z)$ has non negative coefficients,  its radius of convergence $r_{\alpha,\beta}$ is one of its poles (see Remark~\ref{rem:2}). Let $(\alpha,\beta)$ and $(\alpha',\beta')$ be two pairs of states. Let also $x$ and $y$ be executions such that $\alpha\cdot x=\alpha'$ and $\beta'\cdot y=\beta$, and put $p=|x|$ and $q=|y|$. Then $\M_{\alpha,\beta}(n+p+q)\supseteq \{xuy\tqs u\in\M_{\alpha',\beta'}(n)\}$ for every integer $n\geq0$. Since trace monoids are left and right cancellative, it implies:
  \begin{gather*}
    \card\M_{\alpha,\beta}(n+p+q)\geq \card\M_{\alpha',\beta'}(n),
  \end{gather*}
and thus for all reals $z$ where the right-hand series converges:
\begin{gather}
  \label{eq:16}
G_{\alpha',\beta'}(z)\leq \frac1{z^{p+q}}  G_{\alpha,\beta}(z).
\end{gather}
Hence $r_{\alpha',\beta'}\geq r_{\alpha,\beta}$. Inverting the roles of $(\alpha,\beta)$ and of $(\alpha',\beta')$, we also obtain the converse inequality, and finally $r_{\alpha,\beta}=r_{\alpha',\beta'}=r$.

The formula $G(z)\mu(z)=\Id$, valid in the algebra of complex matrices for all complexes $z$ with $|z|<r$, shows that all root of $\theta(z)$ have their modulus at least equal to~$r$. It is thus enough, to complete the proof, to show that $\theta(r)=0$. Seeking a contradiction, assume that $\theta(r)\neq0$, and thus that $\mu(r)$ is invertible. It implies that $G(z)$ is bounded on $[0,r)$, which contradicts that $r$ is a pole of all the series~$G_{\alpha,\beta}(z)$.
%
\end{proof}

If $\M$ is a trace monoid, the M\"obius matrix of the concurrent system $\Mbar$ associated with $\M$ as in Definition~\ref{def:4} is the $1\times1$ matrix with the M\"obius polynomial $\mu_\M(z)$ of $\M$ as unique entry. The characteristic root of $\Mbar$ is thus the unique root of smallest modulus of~$\mu_\M(z)$ (see Section~\ref{sec:trace-monoids}).

\subsubsection{Graph of states of a concurrent system}
\label{sec:digr-stat-cliq}

The first natural object to consider is the ``graph of states'' of a concurrent system, defined as follows. It is actually a labeled multigraph. However this object will be of little use for us, except for graphical representations purposes. Therefore we do not insist on the adapted notion of multigraph.

\begin{definition}
\label{def:9}
  Let $(\M,X,\bot)$ be a concurrent system with $\M=\M(\Sigma,I)$. The \emph{multigraph of states} of the concurrent system is the labeled multigraph $(X,E)$, with a node for each state of the concurrent system, and where there is an arc labeled by $a\in\Sigma$ from a state $\alpha$ to a state $\beta$ whenever $\alpha\cdot a=\beta$.
\end{definition}

In particular, the multigraph of states must have a diamond shape for at least every pair $(a,b)\in I$ and every state $\alpha$ such that $\alpha\cdot (ab)\neq\bot$, since then $(\alpha\cdot a)\cdot b$ and $(\alpha\cdot b)\cdot a$ correspond to two paths in the multigraph with the same origin and the same destination. See illustrations in Section~\ref{sec:examples}.

\subsubsection{Digraphs of states-and-cliques and its augmented version (\/$\DSC$\ and\/ $\ADSC$)}
\label{sec:digr-stat-cliq-1}

For combinatorics purposes, and for instance for counting the executions of a concurrent system, the multigraph of states is of little help. Indeed, two different paths in the multigraph of states, of the form $(\alpha\cdot a)\cdot b$ and $(\alpha\cdot b)\cdot a$ with $ab=ba$, count for only one execution.

Instead, one must rely on the normal form of traces, and thus of executions. For this purpose, we introduce two digraphs related to the normal form of traces, adapted to the framework of concurrent systems. The \emph{digraph of states-and-cliques} (\DSC) is the analogous of the digraph of cliques for trace monoids; the extension to the $\ADSC$ in the following definition mimics the analogous introduced for trace monoids in~\cite{krob03}.

\begin{definition}
  \label{def:5}
  Let $\SS=(\M,X,\bot)$ be a concurrent system. The \emph{digraph of states-and-cliques (\DSC)} of $\SS$ is the directed graph described as follows: its nodes are all pairs $(\alpha,c)$ such that $\alpha$ is a state and $c\in\Cstar_\alpha$; and there is an arc from a node $(\alpha,c)$ to a node $(\beta,d)$ if and only if $\beta=\alpha\cdot c$ and if the relation $c\to d$ holds.

  The \emph{augmented digraph of states-and-cliques (\ADSC)} of $\SS$ has a node for each triple of the form $(\alpha,c,i)$, where $(\alpha,c)$ is a node of the \DSC, and $i=1,\dots,|c|$. There is an arc from $(\alpha,c,i)$ to $(\beta,d,j)$ if:
\begin{enumerate}
\item $(\alpha,c)=(\beta,d)$ and $j=i+1$; or
\item $i=|c|$ and $j=1$ and there is an arc from $(\alpha,c)$ to $(\beta,d)$ in the \DSC.
\end{enumerate}
\end{definition}

\begin{proposition}
  \label{prop:5}
  Let $\SS=(\M,X,\bot)$ be a concurrent system. Then there are 1-1 correspondences between:
  \begin{itemize}
  \item Executions of\/ $\SS$ of height $h$ on the one hand, and paths of length $h$ in the \DSC\ on the other hand, for every integer $h\geq1$.
  \item Executions of\/ $\SS$ of length $n$ on the one hand, and paths of length $n$ in the \ADSC\ which lead from a node of the form $(\alpha,c,1)$ to a node of the form $(\beta,d,|d|)$ on the other hand, for every integer $n\geq1$.
  \item Executions of\/ $\SS$ leading from a state $\alpha$ to a state $\beta$ and of length $n$ on the one hand, and paths of length $n$ in $\ADSC$, leading from a node of the form $(\alpha,c,1)$ to a node of the form $(\beta,d,|d|)$ on the other hand, for every integer $n\geq1$.
  \end{itemize}
\end{proposition}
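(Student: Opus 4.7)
The proof rests on the Cartier--Foata normal form and an elementary propagation property: since $\bot\cdot m=\bot$ for every $m\in\M$, a trace $x\in\M_\alpha$ in normal form $x=c_1\cdots c_h$ satisfies $\alpha\cdot(c_1\cdots c_i)\neq\bot$ for every $i\leq h$; equivalently, setting $\alpha_0=\alpha$ and $\alpha_i=\alpha_{i-1}\cdot c_i$, one has $\alpha_i\in X$ for all $i$ and $c_i\in\Cstar_{\alpha_{i-1}}$. This is the single combinatorial fact that drives everything.

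For the first correspondence, given an execution from $\alpha$ of height $h$, decompose $x=c_1\cdots c_h$ in Cartier--Foata normal form and build the sequence $\bigl((\alpha_0,c_1),\ldots,(\alpha_{h-1},c_h)\bigr)$ with the $\alpha_i$ as above. Each pair is a node of the \DSC{} by the propagation remark, and each consecutive pair is an arc since $\alpha_i=\alpha_{i-1}\cdot c_i$ (state condition) and $c_i\to c_{i+1}$ (clique condition, inherited from normality of the sequence). Conversely, from any \DSC{} path $\bigl((\beta_0,d_1),\ldots,(\beta_{h-1},d_h)\bigr)$, the product $d_1\cdots d_h$ is a trace in normal form, and the chained identities $\beta_i=\beta_{i-1}\cdot d_i$ imply $\beta_0\cdot(d_1\cdots d_h)=\beta_h\neq\bot$, yielding an execution from $\beta_0$ of height $h$. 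The two constructions are mutually inverse by uniqueness of the normal form.

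For the second correspondence, observe that the \ADSC{} replaces each node $(\alpha,c)$ of the \DSC{} by a directed chain $(\alpha,c,1)\to(\alpha,c,2)\to\cdots\to(\alpha,c,|c|)$, and each \DSC{}-arc $(\alpha,c)\to(\beta,d)$ by the single \ADSC{}-arc $(\alpha,c,|c|)\to(\beta,d,1)$. Therefore a \DSC{}-path $\bigl((\alpha_0,c_1),\ldots,(\alpha_{h-1},c_h)\bigr)$ lifts uniquely to an \ADSC{}-path starting at $(\alpha_0,c_1,1)$, ending at $(\alpha_{h-1},c_h,|c_h|)$, and of length $|c_1|+\cdots+|c_h|=|x|=n$; and conversely every \ADSC{}-path between such endpoints projects back to a unique \DSC{}-path. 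Composing with the first correspondence gives the bijection between executions of length $n$ and \ADSC{}-paths of length $n$ with the required shape of endpoints. The third correspondence is then obtained by reading off the initial state $\alpha=\alpha_0$ from the starting node and the final state $\beta=\alpha_{h-1}\cdot c_h$ from the terminal node, and by restricting both bijections accordingly.

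The only delicate point is the propagation property invoked above, which must be verified once and for all (so that nodes $(\alpha_{i-1},c_i)$ really lie in the \DSC); the rest is bookkeeping on the Cartier--Foata form. I expect no substantial obstacle beyond being careful with the $h=1$, $n=1$ boundary cases, where the path reduces to a single node or to a single gadget.
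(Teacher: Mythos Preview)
Your proof is correct and follows essentially the same approach as the paper: build the \DSC{} path from the Cartier--Foata normal form of an execution, then expand each \DSC{} node $(\alpha,c)$ into its mandatory chain $(\alpha,c,1),\ldots,(\alpha,c,|c|)$ in the \ADSC. You are in fact slightly more explicit than the paper's own proof, since you spell out the propagation property (that $\alpha\cdot x\neq\bot$ forces all intermediate $\alpha\cdot(c_1\cdots c_i)\neq\bot$) and write down the inverse map, whereas the paper leaves both implicit.
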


\begin{proof}
  Consider an initial state~$\alpha$, and an execution $x\in\M_\alpha$\,, $x\neq\vd$. Then~$x$, as a trace in~$\M$, has a unique normal form $(c_1,\dots,c_{h})$ where $h$ is the height of~$x$. Put $\alpha_0=\alpha$, and define by induction $\alpha_{i+1}=\alpha_i\cdot c_{i+1}$ for $i=0,\dots,h-1$. Then the sequence $(\alpha_i,c_{i+1})_{0\leq i<h}$ is a path of length $h$ in the \DSC. The uniqueness of the normal form of traces implies that the correspondence between executions of height $h$ and paths of length $h$ is the \DSC\ is a bijection.

  To each node $(\alpha,c)$ in the \DSC\ is associated the mandatory path
  $$
(\alpha,c,1),(\alpha,c,2),\dots,(\alpha,c,|c|)
  $$
of length $|c|$ in the \ADSC. Hence the previous correspondence gives rise to a correspondence between executions of length $n$ in the concurrent system and paths of length $n$ in the \ADSC, provided the start and end nodes are of the given form.
\end{proof}

\begin{proposition}
  \label{prop:7}
  Let $(\M,X,\bot)$ be a concurrent system of characteristic root~$r$, and let $\rho$ be the spectral radius of\/~$\ADSC$. Then  $r=1/\rho$ if $\rho\neq0$ and $r=+\infty$ if $\rho=0$.
\end{proposition}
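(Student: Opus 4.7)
My plan is to reduce Proposition~\ref{prop:7} to Proposition~\ref{prop:6} applied to the digraph~$\ADSC$: the latter already yields $r(\ADSC)=\rho^{-1}$ when $\rho\neq 0$ and $r(\ADSC)=+\infty$ when $\rho=0$. It therefore suffices to establish $r=r(\ADSC)$, where $r$ is the characteristic root of the concurrent system and $r(\ADSC)$ is the quantity defined for digraphs in Section~\ref{sec:preliminaries}. I will do so by proving the two inequalities $r\geq r(\ADSC)$ and $r(\ADSC)\geq r$ separately, using the bijection of Proposition~\ref{prop:5} in one direction and a path-extension argument inside $\ADSC$ in the other.

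For $r\geq r(\ADSC)$, the third bullet of Proposition~\ref{prop:5} provides, for every pair of states $(\alpha,\beta)$ and every $n\geq 1$, a bijection between executions of length~$n$ from $\alpha$ to $\beta$ and paths of length~$n$ in $\ADSC$ whose source and target have the prescribed form $(\alpha,c,1)$ and $(\beta,d,|d|)$. Therefore $G_{\alpha,\beta}(z)$ coincides, up to its constant coefficient, with a finite sum of path-counting series $G_{u,v}(z)$ of $\ADSC$ indexed by the corresponding pairs $(u,v)$ of nodes. Since this sum is finite and all series involved have nonnegative coefficients, its radius of convergence equals the minimum of the radii of its summands, so $r_{\alpha,\beta}=\min_{(u,v)}r_{u,v}$ for this family. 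Taking the minimum over $(\alpha,\beta)\in X\times X$ then exhibits $r$ as a minimum of $r_{u,v}$'s over a subset of pairs of nodes of~$\ADSC$, whence $r\geq r(\ADSC)$.

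For the reverse inequality, I exploit the rigid chain structure inside each layer of triples $(\alpha,c,\cdot)$ of~$\ADSC$. Given arbitrary nodes $x=(\alpha,c,i)$ and $y=(\beta,d,j)$, any path of length~$n$ from $x$ to $y$ can be extended uniquely to a path of length $n+K$ from $(\alpha,c,1)$ to $(\beta,d,|d|)$, where $K=(i-1)+(|d|-j)$, by prepending the forced segment $(\alpha,c,1)\to\cdots\to(\alpha,c,i)$ and appending $(\beta,d,j)\to\cdots\to(\beta,d,|d|)$. This assignment is injective, so
$$\lambda_{x,y}(n)\leq\lambda_{(\alpha,c,1),(\beta,d,|d|)}(n+K),$$
which translates into $z^{K}G_{x,y}(z)\leq G_{(\alpha,c,1),(\beta,d,|d|)}(z)$ on their common range of convergence. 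The right-hand side has radius of convergence at least~$r$ by the previous step, hence so does $G_{x,y}(z)$. Minimising over $(x,y)$ yields $r(\ADSC)\geq r$.

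The only care required is verifying the injectivity of the path-extension map, which is immediate from the uniqueness of the forced prefix and suffix segments, and checking that the polynomial factor $z^{K}$ does not perturb radii of convergence; no substantial obstacle arises. Once the equality $r=r(\ADSC)$ is obtained, the conclusion follows directly from Proposition~\ref{prop:6}.
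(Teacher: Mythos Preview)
Your proof is correct and follows essentially the same approach as the paper's: both reduce to showing $r=r(\ADSC)$ via Proposition~\ref{prop:6}, using the bijection of Proposition~\ref{prop:5} to identify $G_{\alpha,\beta}$ with a finite sum of path-counting series, and the forced-chain structure of $\ADSC$ to compare arbitrary $\lambda_{x,y}(n)$ with the special $\lambda_{(\alpha,c,1),(\beta,d,|d|)}(n+K)$. The paper actually proves the path-extension relation as an \emph{equality} (its~(\ref{eq:22})) rather than your inequality, and it argues the inequality $R\leq r$ by picking a divergent series rather than by your ``radius of a finite nonnegative sum equals the minimum of the radii'' observation; but these are minor organizational differences, not substantive ones.
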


\begin{proof}
Let $F$ be the adjacency matrix of $\ADSC$. By definition, $\rho=\rho(F)$, see Section~\ref{sec:preliminaries}. For any two nodes $(\alpha,c)$ and $(\beta,d)$ of~$\DSC$, and for any two integers $i\in\{1,\dots,|c|\}$ and $j\in\{1,\dots,|d|\}$, the form of $F$ shows that, for all integers $n\geq0$:
\begin{gather}
  \label{eq:22}
  F_{(\alpha,c,i),(\beta,d,j)}^n=F_{(\alpha,c,1),(\beta,d,|d|)}^{n+(i-1)+(|d|-j)}\,;
\end{gather}
this is obvious when thinking of the number of paths represented by the two members of the above equality.

Therefore, if $R_{(\alpha,c,i),(\beta,d,j)}$ denotes the radius of convergence of
$$
\sum_{n\geq0}F^n_{(\alpha,c,i),(\beta,d,j)}z^n\,,
$$
and if $R=\min\bigl\{R_{(\alpha,c,i),(\beta,d,j)}\tqs\bigl((\alpha,c,i),(\beta,d,j)\bigr)\in\ADSC\times\ADSC\bigr\}$, one has:
\begin{gather}
  \label{eq:21}
R=\min\bigl\{R_{(\alpha,c,1),(\beta,d,|d|)}\tqs\bigl((\alpha,c),(\beta,d)\bigr)\in\DSC\times\DSC\bigr\}.
\end{gather}

By Proposition~\ref{prop:6}, we know that $\rho=1/R$, and it remains thus only to show that $r=R$. For any two node $(\alpha,c)$ and $(\beta,d)$ of $\DSC$, and for any integer $n\geq0$, it follows from Proposition~\ref{prop:5} that the number of paths of length $n$ in $\ADSC$ from $(\alpha,c,1)$ to $(\beta,d,|d|)$ is at most equal to $\card\M_{\alpha,\beta}(n)$. Therefore $R_{(\alpha,c,1),(\beta,d,|d|)}\geq r_{\alpha,\beta}\geq r$, which implies $R\geq r$ in view of~(\ref{eq:21}).

For the converse inequality, let $\alpha$ be any state and let $n$ be an integer. Then, still by Proposition~\ref{prop:5}, one has:
\begin{gather*}
  \card\M_{\alpha,\alpha}(n)=
\sum_{(c,d)\in\Cstar_\alpha\times\Cstar_\alpha}(F^n)_{(\alpha,c,1),(\alpha,d,|d|)}.
\end{gather*}
Since the series $\sum\card\M_{\alpha,\alpha}(n)r^n$ is divergent, it implies that at least one of the series $\sum (F^n)_{(\alpha,c,1),(\alpha,d,|d|)}r^n$ is divergent. Hence $R\leq r$, which completes the proof.
\end{proof}

\subsubsection{Positive and null nodes of\/ $\DSC$ and of\/ $\ADSC$}
\label{sec:positive-nul-nodes}

In the following definition, we denote by $C_1(x)$ the first clique that appears in the normal form of a non empty trace~$x$.

\begin{definition}
\label{def:10}
  Let $(\M,X,\bot)$ be a concurrent system, and let\/ $(\alpha,c)$ be a node of\/ \DSC. An execution $x\in\M_{\alpha}$ is an \emph{$(\alpha,c)$-protection} if:
  \begin{gather}
    \label{eq:18}
    \forall y\in\M_{\alpha\cdot x}\quad C_1(xy)=c.
  \end{gather}

  We say that $(\alpha,c)$ is a \emph{positive node} if there exists an $(\alpha,c)$-protection; we say that $(\alpha,c)$ is a \emph{null node} otherwise. We denote by $\DSC^+$ the sub-digraph of\/ $\DSC$ with all its positive nodes.

  A node $(\alpha,c,i)$ of\/ $\ADSC$ is \emph{positive} or \emph{null} according to whether $(\alpha,c)$ is a positive or a null node of\/ $\DSC$. We denote by\/ $\ADSC^+$ the sub-digraph of\/ $\ADSC$ with all its positive nodes.
\end{definition}

A necessary condition for $x$ to be an $(\alpha,c)$-protection is that $C_1(x)=c$; this is an application of~(\ref{eq:18}) with $y=\vd$.

The reader will find examples of concurrent systems with the description of the null and of the positive nodes in Section~\ref{sec:examples}.

Some elementary properties of positive and null nodes are gathered in the following proposition. A probabilistic interpretation of positive and null nodes will be given in Section~\ref{sec:null-nodes-dsc}.

\begin{proposition}
  \label{lem:2}
  Let $(\M,X,\bot)$ be a concurrent system.
  \begin{enumerate}
  \item\label{item:15} Let $\alpha$ be a state such that $\M_\alpha\neq\{\vd\}$. Then all pairs $(\alpha,c)$, with $c$ a maximal clique of~$\Cstar_\alpha$, are positive nodes of\/~$\DSC$. 
  \item\label{item:7} If $\bigl((\alpha,c),(\alpha',c')\bigr)$ is an arc in\/ $\DSC$ and if\/ $(\alpha',c')$ is a positive node of\/ $\DSC$, then $(\alpha,c)$ is also a positive node of\/~$\DSC$.
  \item\label{item:9} Let $\alpha$ be a state and let $x\in\M_\alpha$ be an execution starting from~$\alpha$. Then there exists an execution $y\in\M_{\alpha\cdot x}$ such that:
  \begin{inparaenum}[a)]
    \item $x\cdot y$ has same height as~$x$; and
    \item the nodes of the path in $\DSC$ corresponding to $x\cdot y$ all belong to $\DSC^+$.
    \end{inparaenum}
  \item\label{item:10} Let\/ $(\alpha,c)$ be a null node of\/ $\DSC$. Then for every $x\in\M_\alpha$ such that $C_1(x)=c$, there exists a letter $a\in\Sigma$ such that $x\in\M^a_\alpha$, where $\M^a=\langle\Sigma\setminus\{a\}\rangle$.
  \end{enumerate}
\end{proposition}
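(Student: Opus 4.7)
\textbf{Part (\ref{item:15}).} The plan is to show that the trace $c$ itself is an $(\alpha,c)$-protection. For any $y\in\M_{\alpha\cdot c}$, the first clique $C_1(cy)$ automatically contains $c$. If it contained an additional letter $b\notin c$, then $b$ would commute with every letter of $c$ (to be pulled past $c$ to the leftmost position of $cy$). Since $b\notin c$, such a $b$ must appear in $y$; moving $b$ to the head of $y$ via $y=b\cdot y_0$ gives the identity $\alpha\cdot cy=(\alpha\cdot b)\cdot c\cdot y_0\neq\bot$, which implies $\alpha\cdot(cb)\neq\bot$ and hence $c\cup\{b\}\in\Cstar_\alpha$, contradicting the maximality of $c$.

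\textbf{Part (\ref{item:7}).} Given an $(\alpha',c')$-protection $z$, the plan is to verify that $w=c\cdot z$ is an $(\alpha,c)$-protection. The arc $c\to c'$ is normal, so every letter of $c'=C_1(z)$ has a dependent letter in $c$ and cannot be moved past $c$ to join the first clique of $cz$; hence $C_1(w)=c$. For any $y\in\M_{\alpha\cdot w}=\M_{\alpha'\cdot z}$, the protection property of $z$ gives $C_1(zy)=c'$, and the same normality argument yields $C_1(wy)=C_1(c\cdot zy)=c$.

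\textbf{Part (\ref{item:9}).} This is the main obstacle. The plan is to extend $x$ so that the last node of the path for $x\cdot y$ in $\DSC$ corresponds to a maximal clique, which is positive by part~(\ref{item:15}); part~(\ref{item:7}) then propagates positivity backward to all earlier nodes of the path. Pick a maximal clique $\tilde c\in\Cstar_{\alpha_{h-1}}$ containing $c_h$, and consider the candidate $y_0$ formed as a product of the letters in $\tilde c\setminus c_h$ taken in any order. Then $x\cdot y_0=c_1\cdots c_{h-1}\tilde c$ in the monoid, and a simple length count shows that $\height(x\cdot y_0)=h$. If every letter of $\tilde c\setminus c_h$ has a dependent letter in $c_{h-1}$, this decomposition is the Cartier-Foata normal form and the last node is $(\alpha_{h-1},\tilde c)$ as desired. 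Otherwise, some letters of $\tilde c\setminus c_h$ ``drift'' to earlier cliques of the normal form; to compensate, add extra copies of each drifting letter. Since no letter commutes with itself, every new copy is blocked by the previous one and therefore cannot drift as far, so with enough repetitions at least one copy of each letter of $\tilde c\setminus c_h$ remains in the last clique. The main obstacle is the delicate bookkeeping of the Cartier-Foata normal form: one must verify that the height remains $h$ throughout and that the last clique ultimately contains (the letters of) a maximal clique.

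\textbf{Part (\ref{item:10}).} The plan is a contrapositive: suppose $x\in\M_\alpha$ with $C_1(x)=c$ contains every letter of $\Sigma$; we show that $x$ is an $(\alpha,c)$-protection, which contradicts $(\alpha,c)$ being null. If there existed $y\in\M_{\alpha\cdot x}$ and $b\in C_1(xy)\setminus c$, then some occurrence of $b$ would be leftmost in $xy$. Such an occurrence cannot lie in $x$, for otherwise $b$ would already be leftmost in $x$ and hence in $C_1(x)=c$. And it cannot lie in $y$, for such an occurrence would have to be moved past the (existing) occurrence of $b$ in $x$, impossible since $b$ does not commute with itself. Hence $b$ is not a letter of $x$, contradicting the hypothesis that $x$ contains every letter of $\Sigma$.
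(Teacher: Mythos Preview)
Your arguments for Parts~\ref{item:15}, \ref{item:7}, and~\ref{item:10} are correct and follow the same lines as the paper's proof, only with more detail filled in.

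The gap is in Part~\ref{item:9}. Your plan of ``adding extra copies of each drifting letter'' does not work as stated, for two reasons. First, there is no guarantee that the extra copies yield a valid execution: after appending $y_0$, the reached state is $\alpha\cdot(xy_0)=\alpha_{h-1}\cdot\tilde c$, and you have no information ensuring that a further letter $b\in\tilde c\setminus c_h$ is enabled there, so $\alpha\cdot(xy_0b)$ may well equal~$\bot$. Second, even if the execution stays valid, the drifting letters change the intermediate states: the state just before the last clique is no longer $\alpha_{h-1}$, so the target $\tilde c$ (chosen maximal in $\Cstar_{\alpha_{h-1}}$) need not be maximal in the \emph{new} $\Cstar_\beta$. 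You recognize that the bookkeeping is ``delicate'', but it is in fact broken, not merely delicate.

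The paper avoids all of this with a one-line non-constructive choice: take $y$ to be any maximal element of the finite, non-empty set $P=\{z\in\M_{\alpha\cdot x}:\height(xz)=h\}$. If the last clique $d_h$ of the normal form of $xy$ were not maximal in $\Cstar_\beta$ (with $\beta$ the state reached just before~$d_h$), then one could append a single letter $b$ with $d_h\cup\{b\}\in\Cstar_\beta$; this keeps the height equal to~$h$ and contradicts the maximality of~$y$. Then Parts~\ref{item:15} and~\ref{item:7} finish the argument exactly as in your outline. The constructive variant closest to your idea that \emph{does} work is to iterate: extend the current last clique to a maximal one in the \emph{current} $\Cstar_\beta$, recompute the normal form, and repeat; length strictly increases and is bounded by $h\cdot|\Sigma|$, so this terminates---but this is just an effective way of reaching a maximal element of~$P$, not the ``add repeated copies'' procedure you described.
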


\begin{proof}
  Point~\ref{item:15}.\quad Any clique $c$ of $\Cstar_\alpha$ which is maximal in $\Cstar_\alpha$ with respect to the inclusion is itself an $(\alpha,c)$-protection. Hence $(\alpha,c)$ is a positive node.

  Point~\ref{item:7}.\quad Let $x'\in\M_{\alpha'}$ be an $(\alpha',c')$-protection. Put $x=c\cdot x'$. Then $x$ is an $(\alpha,c)$-protection.
  
  Point~\ref{item:9}.\quad  Let $h=\height(x)$ be the height of~$x$. Then choose $y$ as any maximal element in the finite and non empty set $P=\{z\in\M_{\alpha\cdot x}\tqs\height(x\cdot z)=h\}$. In particular the last clique of the normal form of $x\cdot y$ is maximal, hence the corresponding node of $\DSC$ is positive according to the remark stated above the proposition. If follows from point~\ref{item:7} already proved that all the previous nodes are all positive.

  Point~\ref{item:10}.\quad By contradiction, if there exists an execution $x\in\M_\alpha$ such that $C_1(x)=c$ and $x$ contains an occurrence of every letter, then for every execution $y\in\M_{\alpha\cdot x}$, the first clique of $x\cdot y$ would still be equal to~$c$. Hence $x$ would be an $(\alpha,c)$-protection, contradicting that $(\alpha,c)$ is a null node.
\end{proof}

For a trace monoid~$\M$, seen as a concurrent system \emph{via} the identification with the concurrent system~$\Mbar$ (see Definition~\ref{def:4}), positive and null nodes of $\DSC$ are easily determined, as shown by the following result (proof omitted).

\begin{proposition}
  \label{prop:8}
  Let $\M=\M(\Sigma,I)$ be a trace monoid, and let $\Mbar$ be the associated concurrent system. Let $\Sigma=\Sigma_1+\dots+\Sigma_p$ be the partition of\/ $\Sigma$ in connected components with respect to the relation $D=(\Sigma\times\Sigma)\setminus I$. Let $\M_i=\langle\Sigma_i\rangle$ for $i=1,\dots,p$, with set of cliques~$\C_i$.

  Then $\M$ identifies with the direct product $\M_1\times\dots\times\M_p$ and $\C$ identifies with the Cartesian product $\C_1\times\dots\times\C_p$.

  The positive nodes of\/ $\DSC$ are those non empty cliques $c$ of\/ $\C$ which correspond to a tuple $(c_1,\dots,c_p)\in\C_1\times\dots\times\C_p$ such that $c_i\neq\vd$ for all $i=1,\dots,p$. In particular, if $\M$ is irreducible, all the nodes of\/ $\DSC$ are positive.
\end{proposition}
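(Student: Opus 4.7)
The plan is to first establish the product decomposition, then characterize positive nodes in both directions, and finally observe that irreducibility forces all nodes to be positive.

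For the first part, the identification $\M = \M_1 \times \dots \times \M_p$ is standard: letters from distinct components of the dependence graph are independent by definition, so the universal property of the direct product is immediate. Cliques decompose by projecting onto each factor, yielding $\C \cong \C_1 \times \dots \times \C_p$. A crucial compatibility I will use repeatedly is that the Cartier-Foata first clique splits componentwise: for $x = (x_1, \dots, x_p) \in \M_1 \times \dots \times \M_p$, one has $C_1(x) = (C_1(x_1), \dots, C_1(x_p))$, with the convention $C_1(\vd) = \vd$. This holds because a letter $a \in \Sigma_i$ can be brought to the front of $x$ iff it can be brought to the front of $x_i$, letters from other components commuting freely with $a$.

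For the characterization, consider a non-empty clique $c$ identified with $(c_1, \dots, c_p)$. If some $c_i = \vd$, then any $x$ with $C_1(x) = c$ must have $x_i = \vd$; picking any $a \in \Sigma_i$ and setting $y = a$, the product $xy$ has $i$-th component $a$, so its first clique strictly contains~$c$. This contradicts~(\ref{eq:18}), so $(\ast, c)$ is null. Conversely, if every $c_i \neq \vd$, I will build an $(\ast,c)$-protection $x = (x_1, \dots, x_p)$ by choosing each $x_i \in \M_i$ with $C_1(x_i) = c_i$ and whose alphabet equals all of~$\Sigma_i$. The construction is layered: set $B_0 := c_i$ and $B_{k+1} := B_k \cup \{b \in \Sigma_i : \exists a \in B_k,\ (a,b) \in D\}$; connectedness of $\Sigma_i$ ensures $B_K = \Sigma_i$ for some $K$. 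Defining $x_i := c_i \cdot w_1 \cdots w_K$ with $w_k$ any word listing the letters of $B_k \setminus B_{k-1}$, each newly inserted letter is dependent on some earlier letter already placed, preventing it from being minimal in $x_i$; hence $C_1(x_i) = c_i$.

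To verify that $x$ is indeed a protection, I will check that for any $y = (y_1, \dots, y_p)$, the product $xy$ decomposes componentwise and so does $C_1(xy)$. Within component~$i$, every letter of $\Sigma_i$ already occurs in $x_i$; for a letter $b \notin c_i$, its first occurrence in $x_i y_i$ therefore lies in $x_i$, where it has a preceding dependent letter by definition of $C_1(x_i) = c_i$. Consequently $b \notin C_1(x_i y_i)$, so $C_1(x_i y_i) = c_i$ and $C_1(xy) = c$, as required. The last assertion of the proposition is then immediate: if $\M$ is irreducible then $p = 1$, and the condition $c_1 \neq \vd$ reduces to $c \neq \vd$, which holds for every node of~$\DSC$. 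The main technical point of the argument is the layered construction of $x_i$, where the connectedness of $\Sigma_i$ is essential to exhaust its alphabet without enlarging the prescribed first clique.
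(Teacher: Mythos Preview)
Your proof is correct. The paper itself omits the proof of this proposition (it is introduced with the parenthetical ``proof omitted''), so there is nothing to compare against; your argument supplies exactly the kind of elementary verification the authors presumably had in mind. The two directions are handled cleanly: the necessity of $c_i\neq\vd$ follows immediately from the componentwise splitting of~$C_1$, and your layered construction of the protection~$x_i$ is the natural way to exploit the connectedness of each~$\Sigma_i$. One minor remark: the verification that $C_1(x_iy_i)=c_i$ can be phrased more directly by observing that since $x_i$ contains every letter of~$\Sigma_i$ and $D$ is reflexive, every occurrence in~$y_i$ lies above some occurrence in~$x_i$ in the dependence poset of~$x_iy_i$, so the minimal elements of $x_iy_i$ coincide with those of~$x_i$.
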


\subsection{Examples}
\label{sec:examples}

We collect a few examples to illustrate the notions introduced above. The positive and null nodes can be determined ``by hand'' using Definition~\ref{def:10} and Proposition~\ref{lem:2}. Another and simpler method to determine the positive and null nodes will be to use the fothcoming results of Section~\ref{sec:appl-spectr-prop}. 

\subsubsection{Example of trace monoids}
\label{sec:example-trace-mono}


The smallest non-elementary irreducible monoid is $\M=\langle a,b,c\;|\; ab=ba\rangle$. The set of cliques is $\C=\{\vd,a,b,c,ab\}$, and the M\"obius polynomial is $\mu(z)=1-3z+z^2$, with characteristic root $r=(3-\sqrt5)/2$. 

Figure~\ref{fig:01},~$(a)$ depicts the Coxeter graph of the monoid (see Section~\ref{sec:trace-monoids}). 
Let $\Mbar=(\M,\{*\},\bot)$ be the concurrent system associated with~$\M$ as in Definition~\ref{def:4}. Figure~\ref{fig:01},~$(b)$ depicts the \DSC\ of~$\Mbar$, which is isomorphic to the digraph of cliques of~$\M$.

\begin{figure}
  \centering
  \begin{tabular}{c|c} \begin{minipage}[c]{.4\textwidth}
$$      \xymatrix{\bullet\POS!L(2)\drop{a}\POS[]\ar@{-}[r]&\bullet\POS!U(1.5)\drop{c}\POS[]\ar@{-}[r]&\bullet\POS!R(1.5)\drop{b}}
$$
 \end{minipage}
    &
\begin{minipage}[c]{.6\textwidth}
$$      \xymatrix@R=1.5em{
    &*+[F]{\strut(*,ab)}\POS!U!L(0.5)\ar@(ul,ur)!U!R(0.5)[]%
      \POS[]\POS!L\ar[dl]!U\POS[]\POS!R\ar[dr]!U
\\
*+[F]{\strut(*,a)}\POS!L!U(0.5)\ar@(ul,dl)[]!L!D(0.5)
\POS[]\POS!D\ar@{<->}[dr]!L
&&*+[F]{\strut(*,b)}\POS!R!U(0.5)\ar@(ur,dr)[]!R!D(0.5)
\POS[]\POS!D\ar@{<->}[dl]!R
\\
&    *+[F]{\strut(*,c)}%
\POS!D!L(0.5)\ar@(dl,dr)!D!R(0.5)[]
\POS[]\ar@{<->}[uu]
}
$$
\end{minipage}
    \\
    \\[1em]
$(a)$&$(b)$
  \end{tabular}
  \caption{\small$(a)$---Coxeter graph of the trace monoid $\langle a,b,c\;|\; ab=ba\rangle$.\quad$(b)$---Digraph of cliques of the same monoid (some arrows have a double tip).}
  \label{fig:01}
\end{figure}
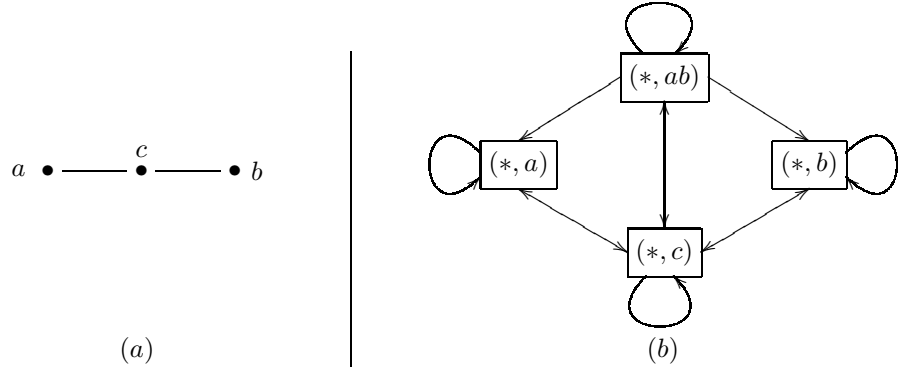

\subsubsection{Elementary example of an irreducible concurrent system}
\label{sec:an-elem-example}

The example depicted on Figure~\ref{fig:elemeytartas} can be easily analyzed. The digraph of states is depicted on Figure~\ref{fig:elemeytartas},~$(a)$. The trace monoid is represented by its Coxeter graph on Figure~\ref{fig:elemeytartas},~$(b)$. The associated concurrent system is irreducible. Its M\"obius matrix is:
\begin{gather*}
  \mu(z)=
  \begin{array}{c}
    \alpha_0\\\alpha_1
  \end{array}
  \begin{pmatrix}
    1-2z+z^2&-z+z^2\\
    -z&1-z
  \end{pmatrix}
\end{gather*}
with determinant $\theta(z)=(1-z)(1-2z)$. The characteristic root is thus $r=1/2$. The $\DSC$ is depicted in Figure~\ref{fig:elemeytartas},~$(c)$. It has one null node only. The positive nodes form a unique strongly connected component.

\begin{figure}
  \centering
  \begin{tabular}{c|c|c}
           \begin{minipage}[c]{.2\textwidth}
    $$
\xymatrix{*++[o][F-]{\alpha_0}%
\POS!L\ar@(l,ul)^{d}[]!U!L(.2)%
\POS!R\ar@(r,ur)_{a}[]!U!R(.2)%
\POS[]\ar@<1ex>^{b}[d]\ar@<1ex>^{c}[d];[]\\
  *++[o][F-]{\alpha_1}
\POS!L\ar@(l,dl)_{d}[]!D
    }
    $$
  \end{minipage}
    &
\begin{minipage}[c]{.1\textwidth}
         $$
         \xymatrix@C=1em{
           \bullet\ar@{-}[r]\ar@{-}[d]\labelu{a}&\bullet\ar@{-}[dl]\labelu{b}\\
           \bullet\ar@{-}[d]\labelr{c}\\
           \bullet\labelr{d}
}
$$
\end{minipage}
    &
         \begin{minipage}[c]{.7\textwidth}
   $$
    \xymatrix@C=1em{
     & &&*+[F]{(\alpha_0,d)}\POS!R!D(.5)\ar@(dr,ur)[]!R!U(0.5)&&{}\save[]*\txt<2em>{\itshape null node}\restore\\
 {}\POS[]+<-2.6em,1.5em>\ar@{--}[rrrrr]+<1.6em,1.5em>      &*+[F]{(\alpha_0,ad)}\POS!U\ar[urr]!L\POS[]\ar[rrr]\POS!L\ar[dl]!U\POS[]\ar[d]\POS[]\POS!L!D(0.5)\ar@(dl,dr)[]!D!L(0.5)
      &&&*+[F]{(\alpha_0,bd)}\POS!D\ar[dl]!R\ar[ddl]!R&\\
      *+[F]{(\alpha_0,a)}\ar[r]\POS!L!D(0.5)\ar@(dl,dr)[]!D!L(0.5)
      &*+[F]{(\alpha_0,b)}     &&*+[F]{(\alpha_1,c)}\POS!U!R(0.25)\ar[ur]!D!L\POS[]\ar@<1ex>[ll]\ar@<1ex>[ll];[]\POS!L!U\ar[ull]!R!D
      \POS[]\ar'[u][uu]\POS!D!L(0.5)\ar@(dl,dr)[lll]!D!R(0.5)
      &{}\save[]\POS[]+<2em,1em>*\txt<2em>{\itshape positive nodes}\restore
 \\
 &&&*+[F]{(\alpha_1,d)}\POS!L!D(0.5)\ar@(dl,dr)[]!D!L(0.5)\POS[]\ar[u]&\strut\hspace{3em}\strut
       \save "1,4"!L(1.5)."1,4"!R(1.5)*+[F.]\frm{}\restore
       \save "2,1"+<-2em,1.2em>."4,5"!R(1.15)*+[F.]\frm{}\restore
      }
     $$
   \end{minipage}
\\[9em]
    $(a)$&$(b)$&$(c)$
  \end{tabular}                                                                              
  \caption{\small $(a)$---The labelled digraph of states of an irreducible concurrent system with two states.\quad$(b)$---The Coxeter graph of the associated trace monoid.\quad $(c)$---$\DSC$ of the associated concurrent system. Light dash boxes depict the strongly connected components. The long dash line marks the frontier between positive and null nodes.}
\label{fig:elemeytartas}
\end{figure}
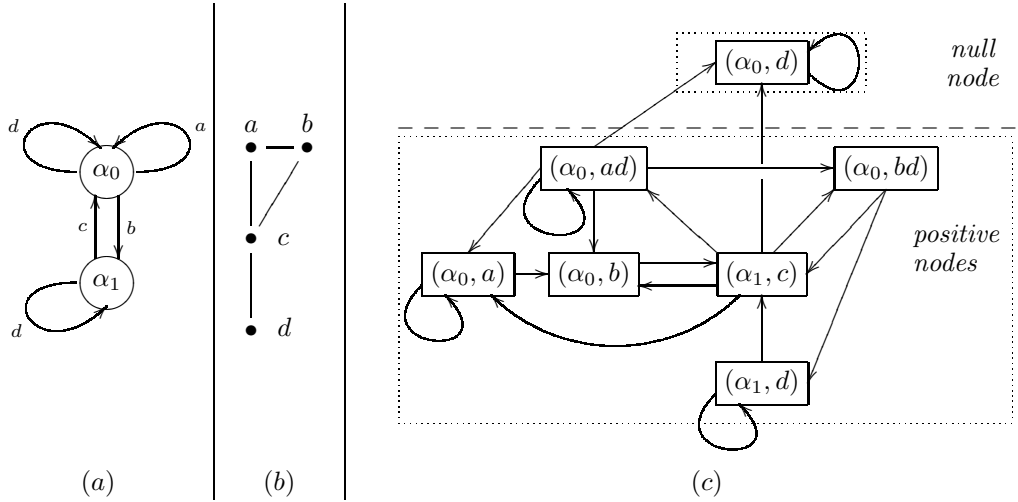

\subsubsection{Tilings of the Aztec diamond: an intersting example}
\label{sec:tilings}

\begin{figure}
  \centering
\begin{tabular}{cccc}
  \xymatrix@R=1.5em@C=1.5em@M=0pt{
{}    &{}&{}&{}&{}\\
{}    &{}&{}\drop{\bullet}\POS!L(2)\drop{a}&{}&{}\\
{}    &{}&{}&{}&{}\\
{}    &{}&{}\drop{\bullet}\POS!L(2)\drop{b}&{}&{}\\
{}    &{}&{}&{}&{}
  \ar@{-}"1,2";"3,2"
  \ar@{-}"3,2";"3,3"
  \ar@{-}"3,3";"1,3"
  \ar@{-}"1,3";"1,2"
  \ar@{-}"1,3";"1,4"
  \ar@{-}"1,4";"3,4"
  \ar@{-}"3,4";"3,3"
  \ar@{-}"3,3";"1,3"
  \ar@{-}"3,2";"3,3"
  \ar@{-}"3,3";"5,3"
  \ar@{-}"5,3";"5,2"
  \ar@{-}"5,2";"3,2"
  \ar@{-}"3,3";"3,4"
  \ar@{-}"3,4";"5,4"
  \ar@{-}"5,4";"5,3"
  \ar@{-}"5,3";"3,3"
  \ar@{-}"2,1";"2,2"
  \ar@{-}"2,2";"4,2"
  \ar@{-}"4,2";"4,1"
  \ar@{-}"4,1";"2,1"
  \ar@{-}"2,4";"2,5"
  \ar@{-}"2,5";"4,5"
  \ar@{-}"4,5";"4,4"
  \ar@{-}"4,4";"2,4"
}
&

  \xymatrix@R=1.5em@C=1.5em@M=0pt{
{}    &{}&{}&{}&{}\\
{}    &{}&{}\drop{\bullet}\POS!U(3)\drop{a}&{}&{}\\
{}    &{}&{}&{}&{}\\
{}    &{}&{}\drop{\bullet}\POS!L(2)\drop{b}&{}&{}\\
{}    &{}&{}&{}&{}
  \ar@{-}"1,2";"1,4"
  \ar@{-}"1,2";"2,2"
  \ar@{-}"2,2";"2,4"
  \ar@{-}"2,4";"1,4"
  \ar@{-}"2,2";"2,4"
  \ar@{-}"2,4";"3,4"
  \ar@{-}"3,4";"3,2"
  \ar@{-}"3,2";"2,2"
  \ar@{-}"3,2";"3,3"
  \ar@{-}"3,3";"5,3"
  \ar@{-}"5,3";"5,2"
  \ar@{-}"5,2";"3,2"
  \ar@{-}"3,3";"3,4"
  \ar@{-}"3,4";"5,4"
  \ar@{-}"5,4";"5,3"
  \ar@{-}"5,3";"3,3"
  \ar@{-}"2,1";"2,2"
  \ar@{-}"2,2";"4,2"
  \ar@{-}"4,2";"4,1"
  \ar@{-}"4,1";"2,1"
  \ar@{-}"2,4";"2,5"
  \ar@{-}"2,5";"4,5"
  \ar@{-}"4,5";"4,4"
  \ar@{-}"4,4";"2,4"
}
&
  \xymatrix@R=1.5em@C=1.5em@M=0pt{
{}    &{}&{}&{}&{}\\
{}    &{}&{}\drop{\bullet}\POS!L(2)\drop{a}&{}&{}\\
{}    &{}&{}&{}&{}\\
{}    &{}&{}\drop{\bullet}\POS!U(3)\drop{b}&{}&{}\\
{}    &{}&{}&{}&{}
  \ar@{-}"1,2";"3,2"
  \ar@{-}"3,2";"3,3"
  \ar@{-}"3,3";"1,3"
  \ar@{-}"1,3";"1,2"
  \ar@{-}"1,3";"1,4"
  \ar@{-}"1,4";"3,4"
  \ar@{-}"3,4";"3,3"
  \ar@{-}"3,3";"1,3"
  \ar@{-}"4,2";"4,4"
  \ar@{-}"4,4";"5,4"
  \ar@{-}"5,4";"5,2"
  \ar@{-}"5,2";"4,2"
  \ar@{-}"2,1";"2,2"
  \ar@{-}"2,2";"4,2"
  \ar@{-}"4,2";"4,1"
  \ar@{-}"4,1";"2,1"
  \ar@{-}"2,1";"2,2"
  \ar@{-}"2,2";"4,2"
  \ar@{-}"4,2";"4,1"
  \ar@{-}"4,1";"2,1"
  \ar@{-}"2,4";"2,5"
  \ar@{-}"2,5";"4,5"
  \ar@{-}"4,5";"4,4"
  \ar@{-}"4,4";"2,4"
}

&
  \xymatrix@R=1.5em@C=1.5em@M=0pt{
{}    &{}&{}&{}&{}\\
{}    &{}&{}\drop{\bullet}\POS!U(3)\drop{a}&{}&{}\\
{}    &{}&{}\drop{\bullet}\POS!U(3)\drop{c}&{}&{}\\
{}    &{}&{}\drop{\bullet}\POS!U(3)\drop{b}&{}&{}\\
{}    &{}&{}&{}&{}
  \ar@{-}"1,2";"1,4"
  \ar@{-}"1,2";"2,2"
  \ar@{-}"2,2";"2,4"
  \ar@{-}"2,4";"1,4"
  \ar@{-}"2,2";"2,4"
  \ar@{-}"2,4";"3,4"
  \ar@{-}"3,4";"3,2"
  \ar@{-}"3,2";"2,2"
  \ar@{-}"3,2";"3,4"
  \ar@{-}"3,4";"4,4"
  \ar@{-}"4,4";"4,2"
  \ar@{-}"4,2";"3,2"
  \ar@{-}"4,2";"4,4"
  \ar@{-}"4,4";"5,4"
  \ar@{-}"5,4";"5,2"
  \ar@{-}"5,2";"4,2"
  \ar@{-}"2,1";"2,2"
  \ar@{-}"2,2";"4,2"
  \ar@{-}"4,2";"4,1"
  \ar@{-}"4,1";"2,1"
  \ar@{-}"2,4";"2,5"
  \ar@{-}"2,5";"4,5"
  \ar@{-}"4,5";"4,4"
  \ar@{-}"4,4";"2,4"
}
  \\
  $0$&$1$&$2$&$3$\\
    \xymatrix@R=1.5em@C=1.5em@M=0pt{
{}    &{}&{}&{}&{}\\
{}    &{}&{}&{}&{}\\
{}    &{}\drop{\bullet}\POS!L(2)\drop{d}&{}\drop{\bullet}\POS!L(2)\drop{c}&{}\drop{\bullet}\POS!L(2)\drop{e}&{}\\
{}    &{}&{}&{}&{}\\
{}    &{}&{}&{}&{}
  \ar@{-}"2,2";"4,2"
  \ar@{-}"4,2";"4,3"
  \ar@{-}"4,3";"2,3"
  \ar@{-}"2,3";"2,2"
  \ar@{-}"2,3";"4,3"
  \ar@{-}"4,3";"4,4"
  \ar@{-}"4,4";"2,4"
  \ar@{-}"2,4";"2,3"
  \ar@{-}"2,1";"2,2"
  \ar@{-}"2,2";"4,2"
  \ar@{-}"4,2";"4,1"
  \ar@{-}"4,1";"2,1"
  \ar@{-}"2,4";"2,5"
  \ar@{-}"2,5";"4,5"
  \ar@{-}"4,5";"4,4"
  \ar@{-}"4,4";"2,4"
  \ar@{-}"1,2";"1,4"
  \ar@{-}"1,2";"2,2"
  \ar@{-}"2,2";"2,4"
  \ar@{-}"2,4";"1,4"
  \ar@{-}"4,2";"4,4"
  \ar@{-}"4,4";"5,4"
  \ar@{-}"5,4";"5,2"
  \ar@{-}"5,2";"4,2"
}
      &
            \xymatrix@R=1.5em@C=1.5em@M=0pt{
{}    &{}&{}&{}&{}\\
{}    &{}&{}&{}&{}\\
{}    &{}\drop{\bullet}\POS!L(2)\drop{d}&{}&{}\drop{\bullet}\POS!U(2)\drop{e}&{}\\
{}    &{}&{}&{}&{}\\
{}    &{}&{}&{}&{}
  \ar@{-}"2,3";"2,5"
  \ar@{-}"2,5";"3,5"
  \ar@{-}"3,5";"3,3"
  \ar@{-}"3,3";"2,3"
  \ar@{-}"3,3";"3,5"
  \ar@{-}"3,5";"4,5"
  \ar@{-}"4,5";"4,3"
  \ar@{-}"4,3";"3,3"
  \ar@{-}"2,2";"4,2"
  \ar@{-}"4,2";"4,3"
  \ar@{-}"4,3";"2,3"
  \ar@{-}"2,3";"2,2"
  \ar@{-}"2,1";"2,2"
  \ar@{-}"2,2";"4,2"
  \ar@{-}"4,2";"4,1"
  \ar@{-}"4,1";"2,1"
  \ar@{-}"1,2";"1,4"
  \ar@{-}"1,2";"2,2"
  \ar@{-}"2,2";"2,4"
  \ar@{-}"2,4";"1,4"
  \ar@{-}"4,2";"4,4"
  \ar@{-}"4,4";"5,4"
  \ar@{-}"5,4";"5,2"
  \ar@{-}"5,2";"4,2"
}
      &
            \xymatrix@R=1.5em@C=1.5em@M=0pt{
{}    &{}&{}&{}&{}\\
{}    &{}&{}&{}&{}\\
{}    &{}\drop{\bullet}\POS!U(3)\drop{d}&{}&{}\drop{\bullet}\POS!L(2)\drop{e}&{}\\
{}    &{}&{}&{}&{}\\
{}    &{}&{}&{}&{}
  \ar@{-}"2,1";"2,3"
  \ar@{-}"2,3";"3,3"
  \ar@{-}"3,3";"3,1"
  \ar@{-}"3,1";"2,1"
  \ar@{-}"3,1";"3,3"
  \ar@{-}"3,3";"4,3"
  \ar@{-}"4,3";"4,1"
  \ar@{-}"4,1";"3,1"
  \ar@{-}"2,3";"2,4"
  \ar@{-}"2,4";"4,4"
  \ar@{-}"4,4";"4,3"
  \ar@{-}"4,3";"2,3"
  \ar@{-}"2,4";"2,5"
  \ar@{-}"2,5";"4,5"
  \ar@{-}"4,5";"4,4"
  \ar@{-}"4,4";"2,4"
  \ar@{-}"1,2";"1,4"
  \ar@{-}"1,2";"2,2"
  \ar@{-}"2,2";"2,4"
  \ar@{-}"2,4";"1,4"
  \ar@{-}"4,2";"4,4"
  \ar@{-}"4,4";"5,4"
  \ar@{-}"5,4";"5,2"
  \ar@{-}"5,2";"4,2"
}
      &
              
            \xymatrix@R=1.5em@C=1.5em@M=0pt{
{}    &{}&{}&{}&{}\\
{}    &{}&{}&{}&{}\\
{}    &{}\drop{\bullet}\POS!U(3)\drop{d}&{}&{}\drop{\bullet}\POS!U(3)\drop{e}&{}\\
{}    &{}&{}&{}&{}\\
{}    &{}&{}&{}&{}
  \ar@{-}"2,1";"2,3"
  \ar@{-}"2,3";"3,3"
  \ar@{-}"3,3";"3,1"
  \ar@{-}"3,1";"2,1"
  \ar@{-}"3,1";"3,3"
  \ar@{-}"3,3";"4,3"
  \ar@{-}"4,3";"4,1"
  \ar@{-}"4,1";"3,1"
  \ar@{-}"3,3";"3,5"
  \ar@{-}"3,5";"4,5"
  \ar@{-}"4,5";"4,3"
  \ar@{-}"4,3";"3,3"
  \ar@{-}"2,3";"2,5"
  \ar@{-}"2,5";"3,5"
  \ar@{-}"3,5";"3,3"
  \ar@{-}"3,3";"2,3"
  \ar@{-}"1,2";"1,4"
  \ar@{-}"1,2";"2,2"
  \ar@{-}"2,2";"2,4"
  \ar@{-}"2,4";"1,4"
  \ar@{-}"4,2";"4,4"
  \ar@{-}"4,4";"5,4"
  \ar@{-}"5,4";"5,2"
  \ar@{-}"5,2";"4,2"
}
  \\
  $3'$&$2'$&$1'$&$0'$
\end{tabular}

\caption{\small The $8$ tilings of the Aztec diamond of order $2$}
  \label{fig:examples03}
\end{figure}
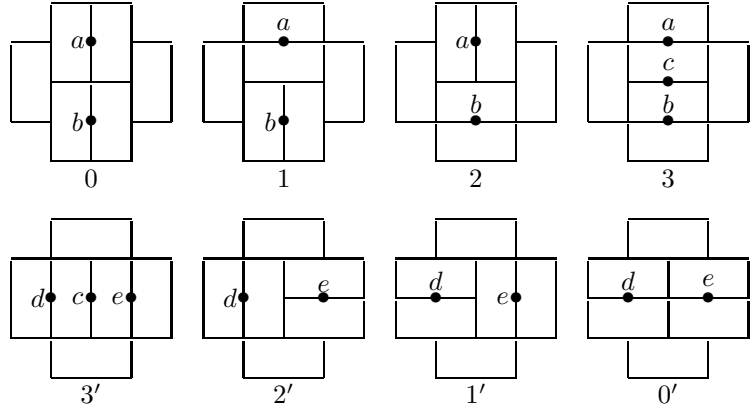

\begin{figure}
  \centering
  \begin{tabular}{c|c}
   \begin{minipage}[c]{.5\textwidth}
$$   \xymatrix@R=1.5em@C=1.2em@M=.2em{
    &\bullet\ar@{-}_{b}[dl]\ar@{-}^{a}[dr]\POS!U(2)\drop{2}&&&&\bullet\ar@{-}^{d}[dr]\POS!U(2)\drop{2'}\\
    \bullet\POS!L(2)\drop{0}&&\bullet\ar@{-}^{c}[rr]\POS!R!U(2)\drop{3}&&
\bullet\ar@{-}_{d}[dr]\ar@{-}^{e}[ur]\POS!U(2)!L\drop{3'} &&\bullet\POS!R(2)\drop{0'}\\
    &\bullet\ar@{-}^{a}[ul]\ar@{-}_{b}[ur]
\POS!D(2)\drop{1}&&&&\bullet\ar@{-}_{e}[ur]\POS!D(2)\drop{1'}
}$$
\end{minipage}
                             &
   \begin{minipage}[c]{.5\textwidth}
$$    \xy{
{\xypolygon5"A"{~:{(2,2):(3,3):}~><{@{}}~={0}{\hbox{\strut$\;\bullet\;$}}%
}},%
{\xypolygon5"B"{~:{(2,2):(4,4):}~><{@{}}~={0}{}}},
"B1"*{c},"B2"*{a},"B3"*{b},"B4"*{d},"B5"*{e},
"A1";"A2"**@{-},
"A1";"A3"**@{-},
"A1";"A4"**@{-},
"A1";"A5"**@{-},
"A3";"A4"**@{-},
"A2";"A4"**@{-},
"A2";"A5"**@{-},
"A3";"A5"**@{-},
}%
                               \endxy
                               $$
                             \end{minipage}
    \\
    $(a)$&$(b)$
  \end{tabular}
  \caption{\small $(a)$---Graph of tilings of the Aztec diamond of order~$2$. Refer to  Figure~\ref{fig:examples03} for the numbering of tilings. \quad$(b)$---Coxeter graph of the monoid acting on the tilings.\quad }
  \label{fig:pqijwqwd}
\end{figure}
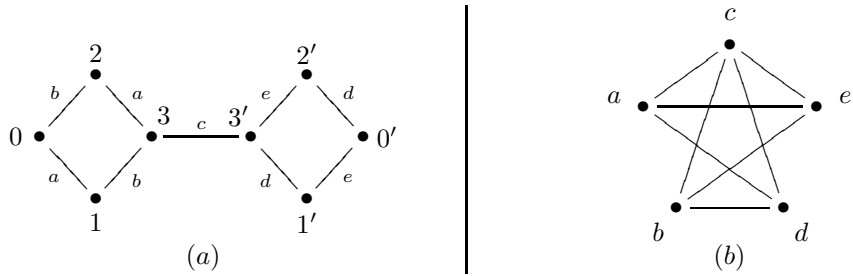

\begin{figure}
  \input{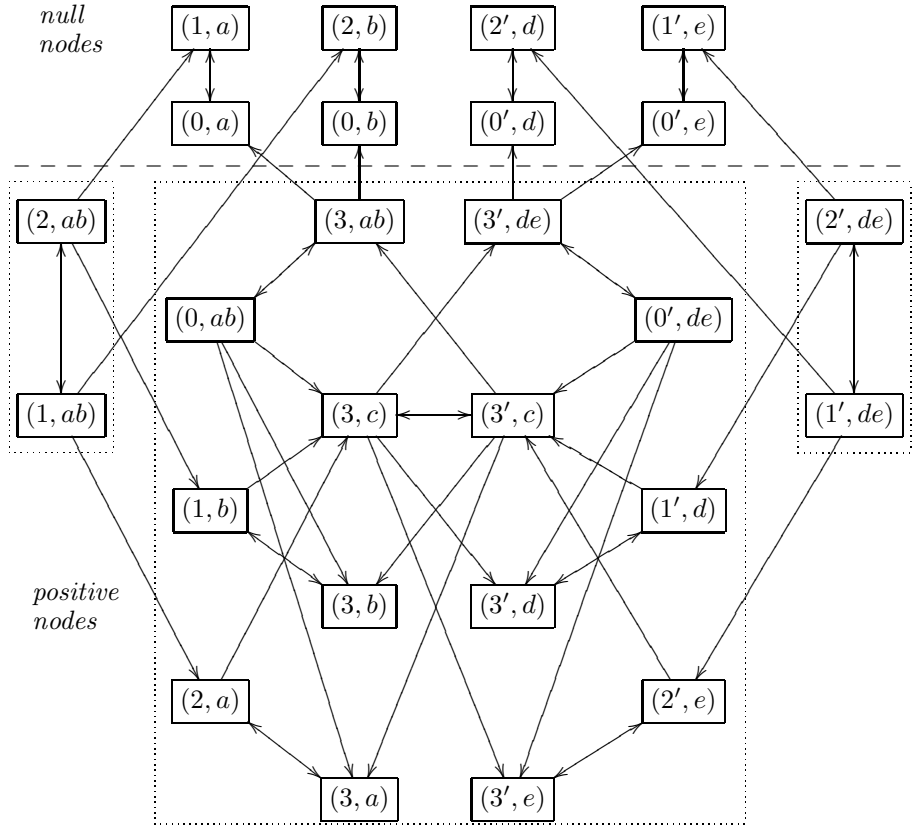}
  \caption{\small $\DSC$ of the concurrent system associated with the tilings of the Aztec diamond of order~$2$. 
 The long dash line marks the frontier between positive and null nodes. Dashed boxes show the strongly connected components of the~$\DSC^+$.}
  \label{fig:dsctiling}
\end{figure}

 Consider the set $X$ of tilings of the Aztec diamond of order $2$ by $1\times 2$ and $2\times 1$ dominoes, depicted in Figure~\ref{fig:examples03}. For each tiling, we have put a labeled bullet to denote the pair of dominoes which can be rotated. This defines an action of the free monoid generated by $a$, $b$, $c$, $d$, $e$ on $X\cup\{\bot\}$. For instance: $0\cdot a=1$, $0\cdot b=2$ and $0\cdot c=\bot$. It is a simple verification that this action factorizes through the commutativity relations associated to rotations acting on disjoint dominoes, \ie,~$(a,b)$ and~$(d,e)$; whence a concurrent system $(\M,X,\bot)$ with $\M=\langle a,b,c,d,e\;|\; ab=ba,\ de=ed\rangle$. The multigraph of states, where all arcs are doubly-oriented and represented as undirected edges, is depicted in Figure~\ref{fig:pqijwqwd},~$(a)$, and the Coxeter graph of the monoid is depicted on Figure~\ref{fig:pqijwqwd},~$(b)$. The concurrent system $(\M,X,\bot)$ is irreducible, see  Figure~\ref{fig:pqijwqwd}. 

The $\DSC$ is depicted in Figure~\ref{fig:dsctiling}. It has 26 nodes among which the $8$ following null nodes:
$(0,a),\quad(0,b),\quad(1,a),\quad(2,b),\quad(0',d),\quad(0',e),\quad(1',e),\quad(2',d).$

The $\DSC^+$ has $3$ strongly connected components and only 1 terminal strongly connected component. The characteristic root of the system will be computed in Section~\ref{sec:tiling-example}.

\subsubsection{An example with several terminal components of\/ $\DSC^+$}
\label{sec:sever-term-comp}

All previous examples have the common feature that their $\DSC^+$ has a unique terminal component. Maybe contradicting our intuition, this is actually not always the case, as shown by the following example. 

Let $(\M,X,\bot)$ be the concurrent system defined by $X=\{0,1,\ldots,11\}$, $\M=\langle a,b,c,d,e,f\;|\;ab=ba,\ ad=da,\ bf=fb,\ cd=dc,\ ce=ec,\ ef=fe\rangle$, and with the graph of states depicted in Figure~\ref{fig:pojqwpojq},~top. 

An important point is to verify that this graph defines indeed a concurrent system for the trace monoid~$\M$, \ie, that the action of $\Sigma^*$ on $X\cup\{\bot\}$ has a well defined quotient with respect to the six commutation relations; this is indeed the case. The concurrent system is clearly accesible and alive; the trace monoid is irreducible since its Coxeter graph, depicted in Figure~\ref{fig:pojqwpojq}, bottom~$(a)$, is connected. Hence the concurrent system is irreducible; yet the $\DSC^+$, shown in Figure~\ref{fig:pojqwpojq}, bottom~$(b)$, has 2 terminal strongly connected components.

\begin{figure}
  \begin{tabular}{c|c}
    \multicolumn{2}{c}{%
    \begin{minipage}{\textwidth}
      $$
  \xymatrix@M=.2ex{
    \bullet\POS!U(2.6)*++[F]{1}&\bullet\ar[l]_{b}\POS!U(2.6)*++[F]{0}&\bullet\ar[l]_{e}\labelu{11}\\
    \bullet\ar[u]^{f}\POS!U(2.2)!R(2)\drop{10}
    &\bullet\ar[l]^{b}\ar[u]^{f}\POS!U(2.2)!R(2)\drop{9}
    &\bullet\ar[l]^{e}\ar[u]^{f}\POS!U(2.2)!R(2)\drop{8}
    &\bullet\ar[l]^{d}\POS!U(2.2)\drop{7}\\
    &\bullet\ar[u]^{c}\POS!U(2.2)!R(2)\drop{6}
    &\bullet\ar[l]^{e}\ar[u]^{c}\POS!U(2.2)!R(2)\drop{5}
    &\bullet\ar[l]^{d}\ar[u]^{c}\POS!U(2.2)!R(2)\drop{4}
    &\bullet\ar[l]^{b}\POS!U(2.2)\drop{3}\\
    &&\bullet\ar[u]^{a}\POS!D(2)\drop{2}
    &\bullet\ar[l]^{d}\ar[u]^{a}\POS!D(2.2)*++[F]{1}
    &\bullet\ar[l]^{b}\ar[u]^{a}\POS!D(2.2)*++[F]{0}
    }
    $$
  \end{minipage}
      }\\
    \multicolumn{2}{c}{}\\
    \hline\\
    \begin{minipage}[c]{.5\textwidth}
\hfill      \xy{
{\xypolygon6"A"{~:{(2,2):(3,3):}~><{@{}}~={0}{\hbox{\strut$\;\bullet\;$}}%
}},%
{\xypolygon6"B"{~:{(2,2):(4,4):}~><{@{}}~={0}{}}},
"B1"*{c},"B2"*{b},"B3"*{a},"B4"*{f},"B5"*{e},"B6"*{d},
"A1";"A2"**@{-},
    "A1";"A3"**@{-},
    "A1";"A4"**@{-},
    "A2";"A5"**@{-},
    "A2";"A6"**@{-},
    "A3";"A4"**@{-},
    "A3";"A5"**@{-},
    "A4";"A6"**@{-},
    "A5";"A6"**@{-},
}%
\endxy\hfill\strut
\end{minipage}
                     &
    \begin{minipage}[c]{.5\textwidth}
\noindent\strut\hfill
                   \xymatrix{
    *+[F]{(8,ef)}\POS!L\ar@(dl,ul)[dd]!L\\
    *+[F]{(4,cd)}\ar[u]\\
    *+[F]{(0,ab)}\ar[u]
      }\hfill
                           \xymatrix{
    *+[F]{(9,bf)}\POS!R\ar@(dr,ur)[dd]!R\\
    *+[F]{(5,ec)}\ar[u]\\
    *+[F]{(1,ad)}\ar[u]
  }%
  \hfill\strut
  \end{minipage}
    \\
  $(a)$&$(b)$
  \end{tabular}
  \caption{\small \emph{Top}---Graph of states of a concurrent system. The bullets labeled by $\{0,\ldots,11\}$ represent the 12 states of the system. States labeled with the identical framed labels \usebox{\labelz}  and \usebox{\labelun} are identified.\quad\emph{Bottom,~$(a)$}---Coxeter graph of the trace monoid.\quad
    \emph{Bottom,~$(b)$}---$\DSC^+$ of the concurrent system.}
  \label{fig:pojqwpojq}
\end{figure}
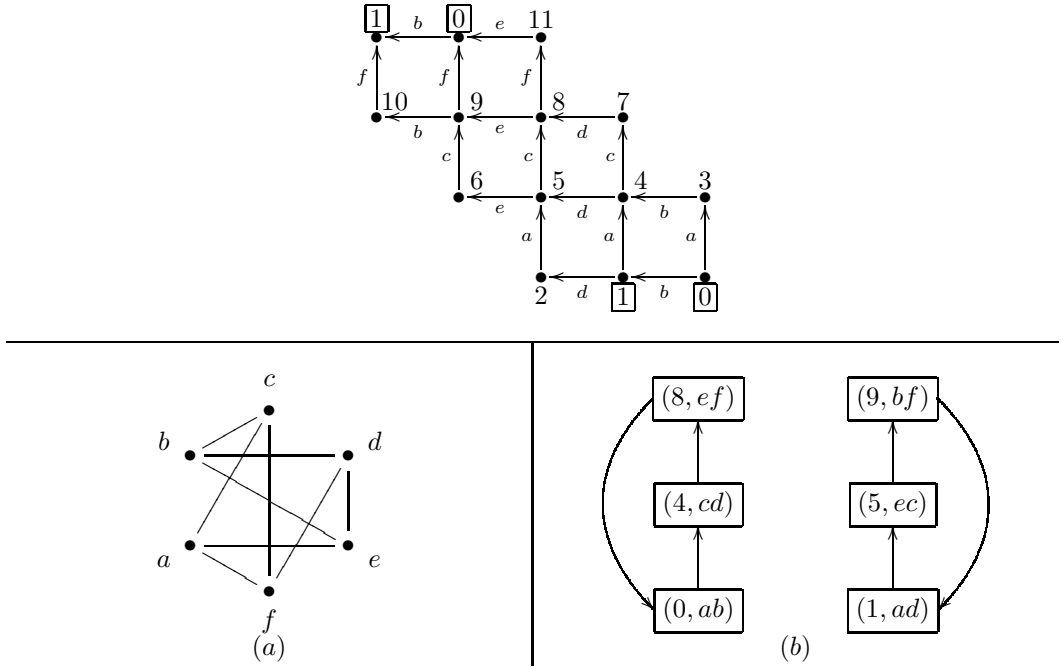

\section{The spectral property}
\label{sec:psectr-prop-conc}

\subsection{Definition of the spectral property. The case of trace monoids}
\label{sec:defin-spectr-prop}

Let $(\M,X,\bot)$ be a concurrent system. Let $a$ be a letter in the base alphabet $\Sigma$ of~$\M$. Put $\Sigma^a=\Sigma\setminus\{a\}$, and let $\M^a$ be the submonoid of $\M$ generated by~$\Sigma^a$. Then the action of $\M$ on $X\cup\{\bot\}$ induces by restriction an action of $\M^a$ on~$X\cup\{\bot\}$, such that $(\M^a,X,\bot)$ is still a concurrent system.

\begin{definition}
  \label{def:2}
  Let   $\SS=(\M,X,\bot)$ be a concurrent system of characteristic root~$r$ and with base alphabet\/~$\Sigma$. For every letter~$a$, let $r^a$ be the characteristic root of\/ $(\M^a,X,\bot)$. 
We say that $\SS$ has the \emph{spectral property} if $r^a>r$ for every $a\in\Sigma$.
\end{definition}

We append the exponent $a$ to the symbols introduced at the beginning of Section~\ref{sec:comb-conc-syst} and in Definition~\ref{def:3} to denote the objects corresponding to $(\M^a,X,\bot)$. The inclusions $\M^a_{\alpha,\beta}(n)\subseteq\M_{\alpha,\beta}(n)$, valid for all integers~$n$, imply that $r^a_{\alpha,\beta}\geq r$ holds for every letter $a$ and for all states $\alpha$ and~$\beta$. The spectral property states that these inequalities are all strict.

For a trace monoid, the equivalence between irreducibility of the trace monoid and the spectral property can be seen as a direct consequence of Perron-Frobenius theory. Yet, the proof that we give below is original, to the best of our knowledge, and does not appeal to Perron-Frobenius theory. In our opinion, it is interesting in that it does not require any knowledge on the structure of the digraph of cliques of the monoid---a feature that we wish to extend for general concurrent systems.

\begin{proposition}
  \label{prop:1}
Let $\M=\M(\Sigma,I)$ be a trace monoid with M\"obius polynomial $\mu(z)$ and characteristic root~$r$. Then:
\begin{enumerate}
\item\label{item:1} Let $\Sigmatilde$ be a subset of\/~$\Sigma$, and let $\rtilde$ be the characteristic root of $\Mtilde=\langle\Sigmatilde\rangle$. Then $r\leq \rtilde$. Furthermore, $r<\rtilde$ if and only if $\mu(\rtilde)>0$.
\item\label{item:2} $\M$ has the spectral property if and only if $\M$ is irreducible.
\end{enumerate}
\end{proposition}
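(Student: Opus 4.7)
The plan is to handle Part~\ref{item:1} directly from the definition of $\tilde r$ as the smallest positive root of the M\"obius polynomial of $\tilde\M$, and then to reduce Part~\ref{item:2} to a single polynomial identity governing the effect of removing one letter. For $r\leq\tilde r$, the inclusion $\tilde\M\subseteq\M$ yields $\tilde G(z)\leq G(z)$ coefficient-wise, so the radius of convergence of $\tilde G$ is at least that of $G$. For the sharpness clause, I would use that the M\"obius polynomial of $\tilde\M$ takes value $1$ at $0$ and has $\tilde r$ as its smallest positive root, hence is strictly positive on $[0,\tilde r)$ and vanishes at $\tilde r$; comparing the position of $r$ relative to this positivity interval gives the stated sign equivalence.

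For Part~\ref{item:2}, the forward implication ``spectral $\Rightarrow$ irreducible'' goes by contrapositive. If $\M$ is reducible, decompose $\Sigma=\Sigma_1\sqcup\Sigma_2$ with $\Sigma_1\times\Sigma_2\subseteq I$ (cf.\ Remark~\ref{rem:1}); then $\M\simeq\M_1\times\M_2$, $\mu=\mu_1\mu_2$, and $r=\min(r_1,r_2)$. Assuming $r_1\leq r_2$, I pick $a\in\Sigma_2$ when $r_1<r_2$ and $a\in\Sigma_1$ when $r_1=r_2$; in either case the factorization $\mu^a=\mu_1^{(a)}\mu_2^{(a)}$ (one factor unchanged, the other with $a$ removed) directly yields $r^a=r$, killing the spectral property.

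For the converse ``irreducible $\Rightarrow$ spectral'', the key tool is the identity
\[
\mu(z)\;=\;\mu^a(z)\;-\;z\,\mu^{\parallel a}(z),
\]
obtained by partitioning cliques of $\M$ according to whether they contain $a$: cliques avoiding $a$ contribute $\mu^a(z)$, and cliques containing $a$ biject with cliques of $\M^{\parallel a}:=\langle\Sigma^{\parallel a}\rangle$ via $c\mapsto a\cdot c$, where $\Sigma^{\parallel a}=\{b\in\Sigma\tqs(a,b)\in I\}$. Evaluating at $z=r$ and using $\mu(r)=0$ gives $\mu^a(r)=r\,\mu^{\parallel a}(r)$; combined with Part~\ref{item:1} applied to $\tilde\Sigma=\Sigma\setminus\{a\}$ and to $\tilde\Sigma=\Sigma^{\parallel a}$, this produces the chain
\[
r^a>r \;\iff\; \mu^a(r)>0 \;\iff\; \mu^{\parallel a}(r)>0 \;\iff\; r^{\parallel a}>r,
\]
which reduces the spectral property of $\M$ to the strict inequality $r<r^{\parallel a}$ for every $a\in\Sigma$.

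The main obstacle is establishing this last inequality without invoking Perron--Frobenius. When $\Sigma^{\parallel a}=\emptyset$ (in particular the base case $|\Sigma|=1$), $r^{\parallel a}=+\infty$ and there is nothing to prove; when $\Sigma^{\parallel a}\neq\emptyset$, irreducibility of $\M$ forces the existence of some $b\in\Sigma\setminus\Sigma^{\parallel a}$ with $b\neq a$. I plan to proceed by induction on $|\Sigma|$, the delicate point being that $\M^{\parallel a}$ is in general not irreducible, so the inductive hypothesis cannot be applied to it directly. The route I would follow is to iterate the identity $\mu=\mu^a-z\mu^{\parallel a}$ along a carefully chosen chain of removed letters, writing the factorizations $\mu(z)=(z-r)q(z)$, $\mu^a(z)=(z-r)^{m^a}q^a(z)$, $\mu^{\parallel a}(z)=(z-r)^{m^{\parallel a}}q^{\parallel a}(z)$, and exploiting the simple-root property of $\mu$ at $r$ (a classical consequence of irreducibility, cf.~\cite{krob03}) together with the sign constraints $q(r),q^a(r),q^{\parallel a}(r)<0$ to force a contradiction from any joint vanishing $\mu^a(r)=\mu^{\parallel a}(r)=0$. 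This replaces the strict-decrease argument for the spectral radius under proper restriction of a primitive adjacency matrix by an elementary analytic-combinatorics comparison based on the identity above.
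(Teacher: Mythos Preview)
Your treatment of Point~\ref{item:1} and of the direction ``reducible $\Rightarrow$ not spectral'' in Point~\ref{item:2} coincides with the paper's proof, down to the factorisation $\mu=\mu_1\mu_2$ (your case split on $r_1<r_2$ versus $r_1=r_2$ is unnecessary---picking $a\in\Sigma_2$ works uniformly---but it is not wrong). You also identify the same key identity $\mu(z)=\mu^a(z)-z\,\mu^{\parallel a}(z)$ and derive the same chain of equivalences $r^a>r\iff\mu^a(r)>0\iff\mu^{\parallel a}(r)>0\iff r^{\parallel a}>r$, which is exactly the pivot the paper uses (with $\M^{\parallel a}=\M^{D(a)}$ in the paper's notation).

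Where you diverge from the paper is in closing the implication ``irreducible $\Rightarrow$ spectral''. The paper argues the contrapositive: assume $\mu^a(r)=0$ for some~$a$; the identity gives $\mu^{D(a)}(r)=0$; since $\Sigma\setminus D(a)\subseteq\Sigma\setminus\{b\}$ for every $b\in D(a)$, one gets $r^b=r$ for all such~$b$, and iterating along the $D$-connected component of~$a$ one eventually shows that $\langle\widetilde\Sigma\rangle$ has characteristic root~$r$, where $\widetilde\Sigma$ is the complement of that component---hence $\widetilde\Sigma\neq\emptyset$ and $\M$ is reducible. No simple-root property, no sign analysis.

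Your proposed route instead invokes the simple-root property of $\mu$ at~$r$ (citing~\cite{krob03}) and a sign comparison among $q(r),q^a(r),q^{\parallel a}(r)$. Two problems. First, the simple-root result in~\cite{krob03} is obtained via Perron--Frobenius for the primitive clique digraph, so importing it defeats the paper's stated aim of a PF-free argument; if that import is allowed, the strict monotonicity of the Perron root under proper restriction already gives $r^a>r$ directly. Second, and more concretely, the sign argument as you sketch it does not close: in the generic case $m^a=m^{\parallel a}=1$ one gets $q(r)=q^a(r)-r\,q^{\parallel a}(r)$ with all three quantities negative, which is a constraint, not a contradiction. Your claim that all three $q$-values are negative also fails when a multiplicity is even. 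The paper's iteration avoids both issues and is the missing ingredient in your plan.
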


\begin{proof}
  Point~\ref{item:1}.  Let $\Gtilde(z)$ be the growth series of~$\Mtilde$, and let $\mutilde(z)$ be the M\"obius polynomial of~$\Mtilde$. The inequality $r\leq\rtilde$ derives from the inclusions $\Mtilde(n)\subseteq\M(n)$, true for all integers~$n$, which imply that $\Gtilde(t)\leq G(t)<\infty$ for all $t\in(0,r)$. The identity $\Gtilde(t)\mutilde(t)=1$ holds in the field of reals for all $t\in[0,\rtilde)$. Since $r\leq \rtilde$, it follows firstly that $\mutilde(r)\geq0$, and secondly that $r=\rtilde$ if and only if $\mutilde(r)=0$, whence the stated equivalence.

  Point~\ref{item:2}. Assuming that $\M$ does not have the spectral property, we prove that $\M$ is not irreducible. Pick a letter $a$ such that $r=r^a$. Then $\mu^a(r)=0$, where $\mu^a(z)$ denotes the M\"obius polynomial of~$\M^a$. Let $D(a)=\{b\in\Sigma\tqs(a,b)\notin I\}$, and let $\M^{D(a)}=\langle\Sigma\setminus D(a)\rangle$, with M\"obius polynomial $\mu^{D(a)}(z)$. The definition of the M\"obius polynomial (see Section~\ref{sec:trace-monoids}) shows that:
  \begin{gather}
    \label{eq:1}
    \mu(z)=\mu^a(z)-z\mu^{D(a)}(z).
  \end{gather}
From~(\ref{eq:1}), and from $\mu(r)=\mu^a(r)=0$, follows that $\mu^{D(a)}(r)=0$. Hence Point~\ref{item:1} already proved shows that the characteristic root of $\M^{D(a)}$ is~$r$. Let $\Sigmatilde$ be the subset of $\Sigma$ containing all letters but those in the connected component of $a$ in the graph $(\Sigma,D)$, and let $\Mtilde=\langle\Sigmatilde\rangle$. Repeating inductively the previous reasoning, we obtain that $\Mtilde$ has characteristic root~$r$, and in particular that $\Sigmatilde\neq\emptyset$. Hence $\M$ is not irreducible. 

Conversely, assuming that $\M$ is not irreducible, we prove that it does not have the spectral property. Let $\Sigma=\Sigma_1+\Sigma_2$ be a non trivial partition of $\Sigma$ such that $(a,b)\in I$ for all $(a,b)\in\Sigma_1\times\Sigma_2$, and thus $\M=\M_1\times\M_2$ with $\M_1=\langle\Sigma_1\rangle$ and $\M_2=\langle\Sigma_2\rangle$. The definition of the M\"obius polynomial shows that, with obvious notations: $\mu(z)=\mu_1(z)\mu_2(z)$ (this is a special case of Remark~\ref{rem:1}). Hence, if $r_1$ and $r_2$ denote respectively the characteristic roots of $\M_1$ and of~$\M_2$, one has $r=\min(r_1,r_2)$. Assume, say, that $r_2\geq r_1$. Pick any letter $a\in\Sigma_2$. Then $\M^a=\M_1\times\M^a_2$, and thus $r^a=\min(r_1,r^a_2)$, but $r^a_2\geq r_2\geq r_1$, hence $r^a=r$. This proves that $\M$ does not have the spectral property.
\end{proof}

\subsection{Linking sequences and linking executions}
\label{sec:compl-link-sequ}

We introduce the notion of linking execution, a technical tool for the proof of Theorem~\ref{thr:1} in the next section.

\begin{definition}
  \label{def:7}
  Let $(\M,X,\bot)$ be a concurrent system with $\M=\M(\Sigma,I)$ irreducible, let $\alpha\in X$ be a state. A \emph{linking sequence from~$\alpha$} is a sequence of letters $a_1,\dots,a_p$ satisfying, for some sequence of integers  $1\leq j_1<\dots<j_q\leq p$:
    \begin{enumerate}
    \item $\alpha\cdot (a_1\ldots a_p)\neq\bot$;
    \item $(a_{j_k},a_{j_{k+1}})\in D$ for $k=1,\dots,q-1$, where $D=(\Sigma\times\Sigma)\setminus I$;
    \item each letter of\/ $\Sigma$ has at least one occurrence in the sequence $(a_{j_1},\ldots,a_{j_q})$.
    \end{enumerate}
Let $a\in\Sigma$ be a letter. If the sequence of integers $j_1,\dots,j_q$ can be chosen such that $a_{j_1}=a$, we say that the linking sequence is \emph{$a$-rooted}.
    
    A \emph{linking execution from~$\alpha$} is an execution which is the image in $\M$ of a linking sequence from~$\alpha$. It is \emph{$a$-rooted} if it is the image in $\M$ of an $a$-rooted linking sequence.
  \end{definition}

Clearly, the existence of a linking sequence for a concurrent system $(\M,X,\bot)$ implies that the monoid $\M$ is irreducible.

\begin{proposition}
  \label{prop:2}
  Let $(\M,X,\bot)$ be an accessible concurrent system. Then the following statements are equivalent:
  \begin{enumerate}
  \item\label{item:3} $(\M,X,\bot)$ is irreducible.
  \item\label{item:4} For some state~$\alpha$ and for some letter~$a$, there exists an $a$-rooted linking execution from~$\alpha$.
  \item\label{item:5} For every state~$\alpha$ and for every letter~$a$, there exists an $a$-rooted linking execution from~$\alpha$.
  \end{enumerate}
\end{proposition}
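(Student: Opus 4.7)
The implication \ref{item:5}$\Rightarrow$\ref{item:4} is trivial, so the work reduces to \ref{item:4}$\Rightarrow$\ref{item:3} and \ref{item:3}$\Rightarrow$\ref{item:5}.

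\textbf{For \ref{item:4}$\Rightarrow$\ref{item:3}.} The system is assumed accessible, so only aliveness and the irreducibility of $\M$ remain to be checked. Let $a_1,\dots,a_p$ be the assumed $a$-rooted linking sequence from some state~$\alpha$, with witness subsequence $a_{j_1},\dots,a_{j_q}$. The fact that every letter of~$\Sigma$ occurs in $a_{j_1},\dots,a_{j_q}$ together with the dependency condition $(a_{j_k},a_{j_{k+1}})\in D$ exhibits a walk through the graph $(\Sigma,D)$ visiting every vertex, so this graph is connected and $\M$ is irreducible. For aliveness, given a state $\beta$ and a letter $b$, use accessibility to pick $y\in\M$ with $\beta\cdot y=\alpha$; then $y\cdot(a_1\cdots a_p)$ is an execution from~$\beta$ containing an occurrence of~$b$ (since $b$ appears among the $a_{j_k}$'s).

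\textbf{For \ref{item:3}$\Rightarrow$\ref{item:5}.} Fix a state $\alpha$ and a letter~$a$. Since $\M$ is irreducible the graph $(\Sigma,D)$ is connected; pick any walk $b_1,b_2,\dots,b_m$ in $(\Sigma,D)$ such that $b_1=a$ and every letter of $\Sigma$ occurs in the walk. Now we inductively build the execution using aliveness. Set $\beta_0=\alpha$ and, for $i=0,\dots,m-1$, invoke aliveness of the system at state $\beta_i$ with letter $b_{i+1}$ to produce a trace $x_i\in\M_{\beta_i}$ containing at least one occurrence of~$b_{i+1}$. Put $\beta_{i+1}=\beta_i\cdot x_i$. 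Finally, choose any word representative $w_i\in\Sigma^*$ of~$x_i$, and let $w=w_0w_1\cdots w_{m-1}=a_1\cdots a_p$. By construction $\alpha\cdot w=\beta_m\neq\bot$ in~$\M$, and for each $i$ the word $w_i$ has some position $j_{i+1}$ inside its block with $a_{j_{i+1}}=b_{i+1}$; the resulting indices satisfy $j_1<\dots<j_m$, they realize the walk $b_1,\dots,b_m$, and $a_{j_1}=a$. Thus $w$ is an $a$-rooted linking sequence from~$\alpha$.

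\textbf{Expected difficulty.} The only subtle point is the construction in \ref{item:3}$\Rightarrow$\ref{item:5}: one must separate the combinatorial datum (a walk in the Coxeter graph visiting every vertex and starting at~$a$, which is where irreducibility of $\M$ is used) from the dynamical datum (the need to thread this walk through valid executions, which is where aliveness is used). Accessibility plays no role in this direction beyond being part of the irreducibility hypothesis, while it is what enables the reduction in \ref{item:4}$\Rightarrow$\ref{item:3}.
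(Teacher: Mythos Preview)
Your proof is correct and follows essentially the same approach as the paper's: in both, \ref{item:3}$\Rightarrow$\ref{item:5} is obtained by first choosing a walk in the Coxeter graph $(\Sigma,D)$ starting at $a$ and covering~$\Sigma$, then using aliveness to interleave executions so that the walk is realized; and \ref{item:4}$\Rightarrow$\ref{item:3} uses the covering subsequence to get connectedness of $(\Sigma,D)$ and accessibility to propagate aliveness to every state. Your write-up of \ref{item:3}$\Rightarrow$\ref{item:5} is in fact more explicit than the paper's, which simply says to ``decorate'' the sequence $(b_1,\dots,b_q)$ by inserting traces~$x_i$.
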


\begin{proof}
  Assume that the concurrent system is irreducible.  Pick a state~$\alpha$ and a letter~$a$. We prove the existence of a $a$-rooted linking execution from~$\alpha$.  Let $b_1,\dots,b_q$ be a sequence of letters covering $\Sigma$ and such that $(b_j,b_{j+1})\in D$ for $j=1,\dots,q-1$ and $b_{j_1}=a$. Such a sequence exists since $\M$ is assumed to be irreducible. Then use the fact that the system is alive to decorate the sequence $(b_1,\dots,b_q)$ by inserting traces $x_i$ so that the trace $x=x_1 b_1 x_2b_2\ldots x_q b_q$ satisfies $\alpha\cdot x\neq\bot$. Then $x$ is an $a$-rooted linking execution from~$\alpha$, which proves the implication $\ref{item:3}\implies\ref{item:5}$. The implication $\ref{item:5}\implies\ref{item:4}$ is trivial. For the implication $\ref{item:4}\implies\ref{item:3}$, since the irreducibility of $\M$ is implied by the existence of a linking execution, it remains only to see that the system is alive. It   also follows from the existence of the $a$-rooted linking sequence, combined with the \accessibility{} of the system. The proof is complete.
\end{proof}

A key property of linking executions, that we will use in the proof of Theorem~\ref{thr:1} below, is the following result.

\begin{lemma}
  \label{lem:3}
  Let $a$ be a letter of a concurrent system $(\M,X,\bot)$, and let $x$ be an $a$-rooted linking execution from some state~$\alpha$. Let $\M^a=\langle\Sigma\setminus\{a\}\rangle$, fix $p$ and $q$ two integers and $\beta$ a state. Then the mapping:
  \begin{gather*}
\varphi:    \M_{\beta,\alpha}(p)\times\M^a_{\alpha\cdot x}(q)\to\M_{\beta},\qquad(u,v)\mapsto \varphi(u,v)=uxv
  \end{gather*}
  is injective.
\end{lemma}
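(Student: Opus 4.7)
The strategy is to combine Levi's lemma for trace monoids with a counting argument on the letter $a$ and the $a$-rooted linking structure of $x$. Suppose $uxv = u'xv'$ with $u, u' \in \M_{\beta,\alpha}(p)$ and $v, v' \in \M^a_{\alpha\cdot x}(q)$. Since trace monoids are left-cancellative, it suffices to show $u = u'$; the equality $v = v'$ then follows by cancelling $x$ in $xv = xv'$. I would apply Levi's lemma to the factorisation $u\cdot(xv) = u'\cdot(xv')$: there exist traces $p_0, q_0, r_0, s_0$ in $\M$ with $u = p_0 q_0$, $xv = r_0 s_0$, $u' = p_0 r_0$, $xv' = q_0 s_0$, such that every letter of $q_0$ commutes in $\M$ with every letter of $r_0$. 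Their alphabets $S_{q_0}, S_{r_0}$ are then disjoint (by irreflexivity of $I$), and the length condition $|u| = |u'| = p$ forces $|q_0| = |r_0|$. The goal reduces to showing $q_0 = r_0 = \vd$.

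The next step is a short counting argument on the letter $a$. From $xv = r_0 s_0$ together with $v \in \M^a$ we obtain $|x|_a = |r_0|_a + |s_0|_a$; symmetrically, $|x|_a = |q_0|_a + |s_0|_a$, so $|q_0|_a = |r_0|_a$. Since $a$ belongs to at most one of the disjoint alphabets $S_{q_0}, S_{r_0}$, this common value must be $0$, i.e., $q_0, r_0 \in \M^a$. Left-cancelling $p_0$ in the original identity yields $q_0\cdot x \cdot v = r_0\cdot x \cdot v'$ in $\M$, with all four traces $q_0, r_0, v, v'$ lying in $\M^a$.

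For the final step, assume for contradiction that $r_0 \neq \vd$, hence $q_0 \neq \vd$. The trace $r_0$ is then a non-trivial left factor of $xv$ in $\M^a$, i.e., a non-empty downward-closed subset of the heap of $xv$ containing no $a$-occurrence. The linking chain $a = a_{j_1}, a_{j_2}, \ldots, a_{j_q}$, with consecutive pairs dependent and covering $\Sigma$, forms a totally ordered chain $a_{j_1} < a_{j_2} < \cdots < a_{j_q}$ in the heap of $x$; consequently each chain-occurrence of $a_{j_k}$ for $k\geq 2$ lies strictly above the $a$-occurrence $a_{j_1}$ and cannot lie in $r_0$. Projecting the equation $q_0\cdot x\cdot v = r_0\cdot x\cdot v'$ through the morphism $\pi_a \colon \M \to \M^a$ that deletes $a$'s (well-defined since the commutation relations of $\M$ descend to $\M^a$) gives $q_0\cdot \pi_a(x)\cdot v = r_0\cdot \pi_a(x)\cdot v'$ in $\M^a$. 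I expect the main obstacle to be this last combinatorial step: using the covering of $\Sigma$ by the chain, together with the disjointness and $I$-independence of the non-empty alphabets $S_{q_0}, S_{r_0} \subseteq \Sigma \setminus \{a\}$, to force $q_0 = r_0$, contradicting the non-triviality of their disjoint alphabets. The combination of heap-theoretic reasoning with the action-theoretic constraint that both $u, u'$ carry $\beta$ to $\alpha$ should close the argument.
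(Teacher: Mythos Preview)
Your setup via Levi's lemma and the $a$-counting argument is sound, and you correctly identify that the linking chain $o_1 < o_2 < \cdots$ (with $o_1$ labelled~$a$) lies entirely outside the down-set~$r_0$. But the proof is not finished: you explicitly leave ``the main obstacle'' open, and the two tools you propose for closing it do not work. The projection $\pi_a$ destroys precisely the structure you need---the $a$-rooted chain---so the equation $q_0\,\pi_a(x)\,v = r_0\,\pi_a(x)\,v'$ in $\M^a$ carries no usable information about the original barrier. And the ``action-theoretic constraint'' that $u,u'$ both send $\beta$ to $\alpha$ is irrelevant: the injectivity is a purely trace-monoid statement and holds regardless of the action.

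Here is how to close the gap along your lines. First sharpen your chain observation: since the chain covers~$\Sigma$, every occurrence $e$ in the $v$-part of the heap of $xv$ has its label $b$ appearing as some chain node~$o_k$; the two $b$-occurrences are comparable and $o_k$ sits in the $x$-part, so $o_1 \leq o_k < e$. Hence every $v$-occurrence lies strictly above~$o_1$, and since the down-set $r_0$ avoids $o_1$ it avoids all of~$v$. Thus $r_0$ is already a prefix of~$x$, and symmetrically $q_0 \leq x$. Now set $y = (r_0 q_0)^{-1} x$; the chain, lying above~$o_1$, survives in~$y$. From $s_0 = r_0^{-1}(xv) = q_0\, y\, v$ and $s_0 = q_0^{-1}(xv') = r_0\, y\, v'$ you obtain $q_0\, y\, v = r_0\, y\, v'$, and since $q_0, r_0$ have disjoint alphabets a second application of Levi gives $y v = r_0 S$, $y v' = q_0 S$ for some~$S$. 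This is the original situation with $y$ replacing~$x$; repeating the argument yields $(r_0 q_0)^n \leq x$ for all~$n$, forcing $r_0 = q_0 = \vd$.

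The paper's own proof is a two-sentence heap sketch: a letter of $v$ migrating into the $u$-region would have to commute across all of~$x$, which is blocked because the linking chain in $x$ meets every letter. Your Levi-based formalisation is a legitimate route to the same conclusion; it just needed the descent argument above rather than the $\pi_a$ detour.
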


\begin{proof}
  The only possibility for two pairs $(u,v)$ and $(u',v')$ to satisfy $uxv=u'xv'$ without having $u=u'$ and $v=v'$ would be that a letter from $v$ exchanges its location with a letter from $u$ by successive commutations. But this is impossible since this letter would have to cross the occurrences of $a$ in~$x$.
\end{proof}

\subsection{The spectral property for concurrent systems}
\label{sec:spectr-prop-conc}

The main result of this section is the following.

\begin{theorem}
  \label{thr:1}
  Let $\SS$ be a concurrent system that we assume to be accessible. Then $\SS$ is irreducible if and only if $\SS$ has the spectral property.
\end{theorem}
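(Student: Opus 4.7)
The proof splits into the two implications, and I treat the easier one first.

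\emph{Forward direction: the spectral property implies irreducibility.} I argue by contraposition: assume the accessible system $\SS$ is not irreducible, and produce a letter $a$ with $r^a=r$, contradicting the spectral property. If $\SS$ fails to be alive, there exist a state $\alpha$ and a letter $a$ with $\M_\alpha\subseteq\M^a$, whence $G_\alpha(z)=G^a_\alpha(z)$; since Proposition~\ref{prop:4} gives $G_\alpha$ the radius of convergence~$r$, and $G^a_\alpha=\sum_\beta G^a_{\alpha,\beta}$ is a finite sum of series with non-negative coefficients, at least one $G^a_{\alpha,\beta}$ has radius exactly~$r$, so $r^a=r$. If instead $\M$ is not irreducible, write $\Sigma=\Sigma_1+\Sigma_2$ nontrivially with $\Sigma_1\times\Sigma_2\subseteq I$; Remark~\ref{rem:1} gives $\mu(z)=\mu^1(z)\mu^2(z)$, hence $\det\mu=(\det\mu^1)(\det\mu^2)$, so by Proposition~\ref{prop:4} the characteristic root $r$ equals $\min(\rho_1,\rho_2)$, with $\rho_i$ the smallest positive root of $\det\mu^i(z)$. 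Suppose $r=\rho_1\leq\rho_2$ and pick any $a\in\Sigma_2$; the same argument applied to $\M^a=\langle\Sigma_1\rangle\times\langle\Sigma_2\setminus\{a\}\rangle$ factors $\mu^a(z)=\mu^1(z)\mu'(z)$, where $\mu'(z)$ is the M\"obius matrix of the restricted system on $\langle\Sigma_2\setminus\{a\}\rangle$, and makes the smallest positive root of $\det\mu^a$ at most $\rho_1=r$. Combined with the general inequality $r^a\geq r$, this forces $r^a=r$.

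\emph{Backward direction: irreducibility implies the spectral property.} I argue by contradiction: suppose $\SS$ irreducible and $r^a=r$ for some letter~$a$. The plan is to use an iterated version of Lemma~\ref{lem:3} to derive an inequality on growth series that is incompatible with $G_{\alpha,\alpha}$ being finite below~$r$. For a state $\alpha$ to be chosen, Proposition~\ref{prop:2} supplies an $a$-rooted linking execution $x$ from~$\alpha$, and accessibility supplies $y\in\M_{\alpha\cdot x,\alpha}$; put $w=xy\in\M_{\alpha,\alpha}$ and $W=|w|$. The proof of Lemma~\ref{lem:3} extends to the iterated injection
\begin{gather*}
  (v_0,\dots,v_k)\in(\M^a_{\alpha,\alpha})^{k+1}\;\longmapsto\; v_0wv_1w\dotsm wv_k\in\M_{\alpha,\alpha},
\end{gather*}
the point being that no letter of any $v_i$ can cross the occurrences of $a$ planted by the linking structure in the intervening $w$'s. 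Writing $H(z)=G^a_{\alpha,\alpha}(z)$, this translates into the inequality $H(z)^{k+1}z^{kW}\leq G_{\alpha,\alpha}(z)$, valid for every $z\in[0,r)$ and every $k\geq0$. The desired contradiction will follow as soon as we exhibit a $z_0\in(0,r)$ with $H(z_0)z_0^W>1$, since then the left-hand side diverges as $k\to+\infty$ while $G_{\alpha,\alpha}(z_0)$ stays finite.

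The hard part is to produce such a $z_0$, i.e., to choose $\alpha$ so that the radius of convergence of $H$ equals~$r$; then $H(z)\to+\infty$ as $z\to r^-$ by the Pringsheim-type observation in Remark~\ref{rem:2}, and $z_0$ close enough to~$r$ does the job. The obstruction is that the restricted system $(\M^a,X,\bot)$ need not be accessible, so Proposition~\ref{prop:4} does not give the equality of all the radii $r^a_{\gamma,\delta}$ for free. I would overcome this by examining the strongly connected components of the $\M^a$-reachability graph on $X$ and locating one that contains a pair $(\gamma,\delta)$ witnessing $r^a_{\gamma,\delta}=r$; the rational character of the entries of $G^a(z)=\mu^a(z)^{-1}$ together with the non-negativity of their coefficients should then allow one to transfer the singular behaviour at $z=r$ to a diagonal entry $G^a_{\alpha,\alpha}$ for some $\alpha$ inside that component, after a suitable re-centering of the iterated injection along paths furnished by the accessibility and aliveness of~$\SS$. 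This structural analysis of $(\M^a,X,\bot)$ is precisely the point where the paper's announced use of elementary analytic combinatorics, in place of a direct Perron--Frobenius argument, enters the proof.
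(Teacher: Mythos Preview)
Your forward direction is essentially the paper's. The one wrinkle is that Proposition~\ref{prop:4} is stated for accessible systems, and the sub-systems $(\M^i,X,\bot)$ need not be; it is safer to argue directly from the factorisation $G=G^1G^2$ of growth matrices (a coefficient-level consequence of $\M=\M_1\times\M_2$) and non-negativity to get $r=\min(r_1,r_2)$, rather than via roots of $\det\mu^i$.

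In the backward direction you have the right architecture---linking loops as barriers for an injection---but two pieces are missing. The minor one: your iterated injection $(v_0,\dots,v_k)\mapsto v_0wv_1\cdots wv_k$ with \emph{unconstrained} $|v_i|$ is not a formal consequence of Lemma~\ref{lem:3}, which fixes both lengths. It is true (one shows that the maximal $\M^a$-prefix of the product equals $v_0$ times the maximal $\M^a$-prefix of~$w$, then cancels and inducts), but it needs that argument. The paper avoids the issue entirely by building sets $H_{\alpha,\beta}(n)$ inductively out of pieces of a common fixed length~$K$, so that only the fixed-length Lemma~\ref{lem:3} is ever invoked and disjointness is maintained step by step. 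The major gap is the point you flag yourself: producing a state $\alpha$ with $r^a_{\alpha,\alpha}=r^a$. Your sketch---locate an SCC of the $\M^a$-reachability graph on $X$ containing a witnessing pair $(\gamma,\delta)$---does not work as written, because such a pair need not lie in a single SCC of that graph. The paper's Lemma~\ref{lem:1} fills this by descending to the $\ADSC$ of $(\M^a,X,\bot)$: Propositions~\ref{prop:6} and~\ref{prop:7} locate a node whose loop-counting series already has the extremal radius, and a short comparison transfers this to $G^a_{\alpha,\alpha}$. Once that $\alpha$ is in hand, your contradiction argument goes through and is in fact a bit cleaner than the paper's direct bound $Q(z)\leq R(z)/(1+zR(z))$, since you would not need the period-$K$ bookkeeping; but the existence of~$\alpha$ is not optional, and it is exactly where the analytic-combinatorics input lives.
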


We first need a lemma.

\begin{lemma}
\label{lem:1}
  Let $(\M,X,\bot)$ be a concurrent system of characteristic root~$r$. Then there exists a state $\alpha$ and an integer $d$ such that, for every positive multiple $d'$ of~$d$, the series $\sum_{n\geq0}\card\M_{\alpha,\alpha}(nd')z^n$ has $r^{d'}$ as radius of convergence.
\end{lemma}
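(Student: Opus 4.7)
The idea is to invoke Proposition~\ref{prop:6} for the digraph $N=\ADSC$ and translate closed walks in $\ADSC$ to executions in $\M_{\alpha,\alpha}$ via Proposition~\ref{prop:5}. By Proposition~\ref{prop:7} one has $r(\ADSC)=r$, so Proposition~\ref{prop:6} furnishes a node $y=(\alpha,c,i_0)\in\ADSC$ and an integer $d$ such that $\sum_{n\geq0}\lambda_{y,y}(nd')z^n$ has radius $r^{d'}$ for every positive multiple $d'$ of $d$. My plan is to take the first coordinate $\alpha$ of $y$, together with (possibly a multiple of) this $d$, as the data witnessing the lemma. The degenerate case $r=+\infty$ is immediate: then $\rho(\ADSC)=0$, so $\ADSC$ is acyclic and $\card\M_{\alpha,\alpha}(n)$ vanishes for large $n$, giving a polynomial series of radius $+\infty$.

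Assume henceforth $r<+\infty$. One inequality is free: from $r_{\alpha,\alpha}\geq r$ (definition of $r$) and the standard subsequence estimate $\limsup_n a_{nd'}^{1/n}\leq (\limsup_m a_m^{1/m})^{d'}$, the series $\sum_n\card\M_{\alpha,\alpha}(nd')z^n$ has radius at least $r^{d'}$. For the matching upper bound I produce a lower bound on $\card\M_{\alpha,\alpha}(nd')$ in terms of entries of the adjacency matrix of $\ADSC$. Let $C$ be the strongly connected component of $\ADSC$ containing $y$; the internal clique arcs $(\alpha,c,i)\to(\alpha,c,i+1)$ keep the entire clique block inside one component, so $(\alpha,c,1)\in C$. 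Since $\rho(\ADSC)>0$, there is a nontrivial cycle through $(\alpha,c,1)$, and such a cycle must enter $(\alpha,c,1)$ from some node $y'=(\beta,e,|e|)\in C$ with an $\ADSC$-arc $y'\to(\alpha,c,1)$; by Definition~\ref{def:5} this forces $\beta\cdot e=\alpha$ and $e\to c$ in $\DSC$. Proposition~\ref{prop:5} then identifies paths of length $m$ from $(\alpha,c,1)$ to $y'$ in $\ADSC$ with executions of length $m$ from $\alpha$ to $\beta\cdot e=\alpha$, yielding
\begin{gather*}
\card\M_{\alpha,\alpha}(m)\geq\lambda_{(\alpha,c,1),y'}(m)\quad\text{for every }m\geq1.
\end{gather*}

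It remains to show that $\sum_n\lambda_{(\alpha,c,1),y'}(nd')z^n$ has radius $r^{d'}$ for $d'$ a suitable multiple of $d$, which is a standard consequence of Perron--Frobenius theory applied to the submatrix of $C$ (strongly connected, spectral radius $r^{-1}$): along the appropriate arithmetic progression of $m$, the entry $(F^m)_{(\alpha,c,1),y'}$ grows like $\rho(C)^m$. The principal technical obstacle is aligning the arithmetic progressions arising from Proposition~\ref{prop:6} (closed walks at $y$) and from paths $(\alpha,c,1)\to y'$; since concatenating such a path with the edge $y'\to(\alpha,c,1)$ produces a closed walk at $(\alpha,c,1)$, the two residues modulo the period of $C$ differ only by a fixed shift, and the compatibility is enforced by replacing $d$ with a common multiple of itself and the period of $C$. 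Combining the two inequalities delivers radius exactly $r^{d'}$, as required.
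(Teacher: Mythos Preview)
Your proof is correct and follows the same overall strategy as the paper's: apply Proposition~\ref{prop:6} to $\ADSC$ (whose spectral radius is $r^{-1}$ by Proposition~\ref{prop:7}), extract the state $\alpha$ from the resulting node, and bound $\card\M_{\alpha,\alpha}(m)$ from below by path counts in $\ADSC$ via Proposition~\ref{prop:5}. The difference lies only in how you execute the lower bound on $\card\M_{\alpha,\alpha}(nd')$.

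You pick a specific predecessor $y'=(\beta,e,|e|)$ of $(\alpha,c,1)$, use the inequality $\card\M_{\alpha,\alpha}(m)\geq\lambda_{(\alpha,c,1),y'}(m)$, and are then obliged to re-invoke Perron--Frobenius on the component $C$ to control the growth of $\lambda_{(\alpha,c,1),y'}(nd')$; this in turn forces you to replace $d$ by a common multiple of $d$ and the period of~$C$ so that the arithmetic progressions line up. All of this works, but the paper short-circuits it with the single inequality
\[
F^{d'n}_{u,u}\leq\card\M_{\alpha,\alpha}(d'n)\qquad(n\geq1),
\]
justified as follows: a closed walk of $m$ edges at $u=(\alpha,c,i)$ rotates cyclically to a closed walk of $m$ edges at $(\alpha,c,1)$, and dropping the final node of the latter yields a path of $m$ nodes from $(\alpha,c,1)$ to some $(\gamma,e,|e|)$ with $\gamma\cdot e=\alpha$, which by Proposition~\ref{prop:5} corresponds injectively to an element of $\M_{\alpha,\alpha}(m)$. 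Since Proposition~\ref{prop:6} already tells us that $\sum_n F^{d'n}_{u,u}\,z^n$ has radius $r^{d'}$, the divergence of $\sum_n\card\M_{\alpha,\alpha}(nd')\,r^{d'n}$ follows immediately, with no second use of Perron--Frobenius and no modification of~$d$.
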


\begin{proof}
  Let $F$ be the adjacency matrix of $\ADSC$. According to Proposition~\ref{prop:7}, its spectral radius is $\rho(F)=1/r$. According to Proposition~\ref{prop:6}, there exists a node $u=(\alpha,c,i)$ and an integer $d>0$ such that, for any positive multiple $d'$ of~$d$, the series
  $$
  \widetilde Q(z)=\sum_{n\geq0} F^{d'n}_{u,u}z^n
  $$
  has $r^{d'}$ as radius of convergence. Fix $d'$ a positive multiple of~$d$, and let $Q(z)$ be the series
  $$
  Q(z)=\sum_{n\geq0}\card\M_{\alpha,\alpha}(nd')z^n.
  $$
  Let $R$ be the radius of convergence of~$Q(z)$; we prove that $R=r^{d'}$. Let the series:
  $$
U(z)=\sum_{n\geq0}\card\M_{\alpha,\alpha}(n)z^n.
$$
Its radius of convergence, say~$s$, satisfies $s\geq r$ by definition of~$r$. Hence, for any $|z|<r^{d'}$, the series $Q(z)$ is convergent, and thus $R\geq r^{d'}$. For the converse inequality, we observe that, for every integer $n>1$, according to Proposition~\ref{prop:5}:\quad $F^{d'n}_{u,u}\leq\card\M_{\alpha,\alpha}(d'n)$. Since $\widetilde Q(z)$ is rational with non negative coefficients and of radius of convergence~$r^{d'}$, the series $\widetilde Q(r^{d'})$ is divergent, and so $Q(r^{d'})$ is divergent as well. Hence $R\leq r^{d'}$, and finally $R=r^{d'}$, which was to be proved.
\end{proof}

\begin{proof}[Proof of Theorem~\ref{thr:1}.]
  Let $\SS=(\M,X,\bot)$ be an accessible concurrent system of characteristic root~$r$. The equivalence reduces to $\textsf{True}\iff\textsf{True}$ if the system is trivial, hence we assume in the remaining of the proof that the system is non trivial. It follows from Proposition~\ref{prop:3} that $r<+\infty$. 

  Assume that the system is irreducible. Let $a$ be an arbitrary letter of the base alphabet $\Sigma$ of the monoid. Let $\M^a=\langle\Sigma\setminus\{a\}\rangle$, and let $r^a$ be the characteristic root of $(\M^a,X,\bot)$. We prove that $r^a>r$.

Consider the concurrent system $(\M^a,X,\bot)$. We apply Lemma~\ref{lem:1} to obtain a state $\alpha_0$ and an integer $d>0$ such that, for any positive multiple $K$ of~$d$, the series
  \begin{gather}
    \label{eq:13}
    Q(z)=\sum_{n\geq0}t(n)z^n,\quad\text{with $t(n)=\card\M^a_{\alpha_0,\alpha_0}(nK)$,}
  \end{gather}
has $(r^a)^{K}$ as radius of convergence.

For this integer $d$, we claim that there exists a family of executions $(y_\alpha)_{\alpha\in X}$ and a positive integer $K$ with the following properties, valid for all $\alpha\in X$:
   \begin{enumerate}
   \item\label{item:6} $y_\alpha\in\M_{\alpha,\alpha}$ and $y_\alpha$ is an $a$-rooted linking execution;
   \item\label{item:8} $K$ is a multiple of~$d$, and $|y_\alpha|=K$.
   \end{enumerate}

To construct $(y_\alpha)_{\alpha\in X}$, we use Point~\ref{item:5} of Proposition~\ref{prop:2} to introduce first an $a$-rooted linking execution $u_\alpha$ starting from $\alpha$ for every state~$\alpha$. Since the concurrent system is \accessible{}, pick $v_\alpha\in\M_{\alpha\cdot u_\alpha,\alpha}$ and put $z_\alpha=u_\alpha v_\alpha$. Then $z_\alpha\in\M_{\alpha,\alpha}$, and $z_\alpha$ is still an $a$-rooted linking execution. Let $n_\alpha=|z_\alpha|$ and $m_\alpha=\prod_{\beta\in X,\ \beta\neq\alpha}n_\alpha$. By construction, $n_\alpha\geq|u_\alpha|>0$ for all states~$\alpha$, and thus $\beta_\alpha>0$ for all states~$\alpha$. Put finally $y_\alpha=(z_\alpha)^{d m_\alpha}$. Then $y_\alpha$ is still an $a$-rooted linking execution,  $y_\alpha\in\M_{\alpha,\alpha}$ and $|y_\alpha|=|z_\alpha|dm_\alpha$, which is a positive integer independent of $\alpha$ and multiple of~$d$, as required.

With this family $(y_\alpha)_{\alpha\in X}$ at hand, we construct inductively a family $(H^{\alpha,\beta}_n)_{n\geq0,(\alpha,\beta)\in X\times X}$ of sets  of executions by setting:
\begin{align}
  \label{eq:4}
  &H_{\alpha,\beta}(0)=\M^a_{\alpha,\beta}(0),\\
  \label{eq:7}
  \forall n\geq0\quad&
                                             H_{\alpha,\beta}({n+1})=\bigl(H_{\alpha,\beta}(n)\cdot y_\beta\bigr)\cup\bigcup_{\gamma\in X}H_{\alpha,\gamma}(n)\cdot\M^a_{\gamma,\beta}(K).
\end{align}
In~(\ref{eq:7}), we use the notations $U\cdot u$ and $U\cdot V$, for $U,V\subseteq\M$ and $u\in\M$ to denote $U\cdot u=\{x\cdot u\tqs x\in U\}$ and $U\cdot V=\{x\cdot y\tqs (u,v)\in U\times V\}$.

Using the symbols $+$ and $\sum$ to denote unions of pairwise disjoint families of sets, we claim that two following properties hold for all integers $n$ and for all $(\alpha,\beta)\in X\times X$:
\begin{align}
    \label{eq:5}
&  H_{\alpha,\beta}(n)\subseteq\M_{\alpha,\beta}(nK),\\
\label{eq:6}
 &     H_{\alpha,\beta}(n)=\sum_{i=0}^{n-1}\Bigl(\bigcup_{\gamma\in X}H_{\alpha,\gamma}({n-1-i})\cdot y_\gamma\cdot\M^a_{\gamma,\beta}(iK)\Bigr)+\M^a_{\alpha,\beta}\bigl(nK\bigr).
\end{align}

Property~(\ref{eq:5}) follows easily by induction on the integer~$n$, using in particular that $y_\beta\in\M_{\beta,\beta}(K)$, so we focus on~(\ref{eq:6}). Its is trivially true for $n=0$.

Assuming as an induction hypothesis that~(\ref{eq:6}) holds for $n\geq0$, we prove it for~$n+1$. Returning to~(\ref{eq:7}), we observe that the two terms in the right hand member are disjoint subsets of~$\M_{\alpha,\beta}$,  since $y_\beta$ is a $a$-rooted linking execution of length~$K$. Henceforth, using our disjoint union notation:
\begin{align}
  \label{eq:8}
  H_{\alpha,\beta}(n+1)&=\bigl(H_{\alpha,\beta}(n)\cdot y_\beta\bigr)+\bigcup_{\gamma\in X}H_{\alpha,\gamma}(n)\cdot\M^a_{\gamma,\beta}(K).
\end{align}
We replace $H_{\alpha,\gamma}(n)$ in~(\ref{eq:8}) by its expression given by the induction hypothesis, and distribute the union to get:
  \begin{multline}
\label{eq:10}
    H_{\alpha,\beta}(n+1)=\bigl(H_{\alpha,\beta}(n)\cdot y_\beta\bigr)\\
                              +\sum_{i=0}^{n-1}\bigcup_{\delta\in X} H_{\alpha,\delta}(n-1-i)\cdot y_\delta\cdot
                            \Bigl(\bigcup_{\gamma\in X} \M^a_{\delta,\gamma}(iK)\cdot\M^a_{\gamma,\beta}(K)\Bigr)\\
                             +\bigcup_{\gamma\in X}\M^a_{\alpha,\gamma}(nK)\cdot\M^a_{\gamma,\beta}(K)
                        \end{multline}

Observe that the following equality is true in any concurrent system for every integers $p,q\geq0$ and states $\alpha,\beta$:
\begin{gather}
\label{eq:9}
  \M_{\alpha,\beta}(p+q)=\bigcup_{\gamma\in X}\M_{\alpha,\gamma}(p)\cdot\M_{\gamma,\delta}(q).
\end{gather}

We use~(\ref{eq:9}) twice to transform~(\ref{eq:10}) and then rearrange the terms in the sum and obtain:
\begin{align*}
\label{eq:11}
    H_{\alpha,\beta}(n+1)&=\bigl(H_{\alpha,\beta}(n)\cdot y_\beta\bigr)
                           +\sum_{i=1}^{n}\Bigl(\bigcup_{\delta\in X}H_{\alpha,\delta}(n-i)\cdot y_\delta\cdot\M^a_{\delta,\beta}(iK)\Bigr)\\
                         &\qquad+\M^a_{\alpha,\beta}\bigl((n+1)K\bigr)\\
&=\sum_{i=0}^n\Bigl(\bigcup_{\gamma\in X}
H_{\alpha,\gamma}(n-i)\cdot y_\gamma\cdot\M^a_{\gamma,\beta}(iK)\Bigr)+\M^a_{\alpha,\beta}\bigl((n+1)K\bigr).
\end{align*}
This completes the induction and proves~(\ref{eq:6}).

We now consider the following generating series:
\begin{gather*}
  R_{\alpha,\beta}(z)=\sum_{n\geq0}\lambda_{\alpha,\beta}(n)z^n,\quad\text{with $\lambda_{\alpha,\beta}(n)=\card H_{\alpha,\beta}(n)$,}
\end{gather*}
of radius of convergence $\rho_{\alpha,\beta}$. Recalling that $r$ denotes the minimum of all radius of convergence $r_{\alpha,\beta}$ of all growth series~$G_{\alpha,\beta}(z)$, one has for every non negative real $t<r^{K}$:
\begin{gather*}
  R_{\alpha,\beta}(t)\leq\sum_{n\geq0}\#\M_{\alpha,\beta}(nK)(t^{\frac1K})^{nK}<+\infty,
\end{gather*}
therefore:
\begin{gather}
  \label{eq:12}
\forall(\alpha,\beta)\in X\times X\quad  \rho_{\alpha,\beta}\geq r^K.
\end{gather}

We shall now relate the two sequences $(\lambda_{\alpha_0,\alpha_0}(n))_{n\geq0}$ and $(t_n)_{n\geq0}$, where the latter has been defined in~(\ref{eq:13}). Based on~(\ref{eq:6}), one has:
\begin{align}
 \label{eq:14} \lambda_{\alpha_0,\alpha_0}(n+1)&\geq\sum_{i=0}^n\Bigl(\lambda_{\alpha_0,\alpha_0}(n-i)t(i)\Bigr)+t(n+1).
\end{align}

To establish~(\ref{eq:14}), we have used the following fact: $\card\bigl(H_{\alpha,\gamma}(n)\cdot y_\gamma\cdot\M^a_{\gamma,\beta}(iK)\bigr)=\bigl(\card H_{\alpha,\gamma}(n)\bigr)\cdot\bigl(\card
\M^a_{\gamma,\beta}(iK)\bigr)$, which holds according to Lemma~\ref{lem:3} since $y_\gamma$ is an $a$-rooted linking execution. We recognize in the right hand member of~(\ref{eq:14}) the coefficient of a product series. Hence, multiplying by $z^n$ and summing over $n\geq0$ yields, for every non negative real $z$ where the series converge:
\begin{align*}
\frac1z\bigl(R_{\alpha_0,\alpha_0}(z)-1\bigr)\geq
R_{\alpha_0,\alpha_0}(z)Q(z)+\frac1z\bigl(Q(z)-1),
\end{align*}
and thus:
\begin{align*}
  Q(z)\leq\frac{R_{\alpha_0,\alpha_0}(z)}{1+zR_{\alpha_0,\alpha_0}(z)}.
\end{align*}
Henceforth, $Q(z)$ is bounded on the interval $(0,\rho_{\alpha_0,\alpha_0})$, hence on the interval $(0,r^K)$ thanks to~(\ref{eq:12}). But $Q(z)$ is a rational series with non negative coefficients. Hence, thanks to Remark~\ref{rem:2}, its radius of convergence $(r^a)^K$ is one of its pole, and thus $(r^a)^K>r^K$ and finally $r^a>r$, which was to be proved.

\medskip
For the converse part, we assume that the system is not irreducible and we prove that it does not have the spectral property. Since the system is accessible, two cases may occur.

\emph{First case: the monoid $\M$ is not irreducible}.\quad We follow the same line of proof as in the proof of Proposition~\ref{prop:1}. Let thus $\Sigma=\Sigma_1+\Sigma_2$ be a non trivial partition of~$\Sigma$, such that $\Sigma_1\times\Sigma_2\subseteq I$ and thus $\M=\M_1\times\M_2$ with $\M_1=\langle\Sigma_1\rangle$ and $\M_2=\langle\Sigma_2\rangle$. Then $\mu(z)=\mu_1(z)\mu_2(z)$ according to Remark~\ref{rem:1}, where $\mu_1(z)$ and $\mu_2(z)$ denote the M\"obius matrices of $(\M_1,X,\bot)$ and of $(\M_2,X,\bot)$, of characteristic roots $r_1$ and~$r_2$. It follows that $r=\min(r_1,r_2)$. Assume, say, that $r_2\geq r_1$. Pick any letter $a\in\Sigma_2$. Then obvious comparisons on growth series show that, with obvious notations: $r^a_2\geq r_2$. Since $\M^a=\M_1\times\M^a_2$, one also have $\mu^a(z)=\mu_1(z)\mu_2^a(z)$ and thus $r^a=\min(r_1,r^a_2)=r$. Hence $(\M,X,\bot)$ does not have the spectral property.

\emph{Second case: the system is not alive}. There exists a state $\alpha_0$ and a letter $a$ such that $\M_{\alpha_0}=\M_{\alpha_0}^a$. But since the system is \accessible, it implies that $\M_\alpha=\M^a_\alpha$ for every state~$\alpha$. Hence $r=r^a$ and $(\M,X,\bot)$ does not have the spectral property.

This completes the proof of the theorem.
\end{proof}


\begin{corollary}
\label{thr:2}
  Let $(\M,X,\bot)$ be a non trivial and irreducible concurrent system of characteristic root~$r$. Then:
 $\ADSC^+$ has the same spectral radius $r^{-1}$ as the spectral radius of\/ $\ADSC$.
\end{corollary}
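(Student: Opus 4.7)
The plan is to prove the two inequalities $\rho(\ADSC^+) \le 1/r$ and $\rho(\ADSC^+) \ge 1/r$ separately. The first is immediate: $\ADSC^+$ is an induced subgraph of $\ADSC$, so its adjacency matrix is a principal submatrix of that of $\ADSC$; monotonicity of the spectral radius on nonnegative matrices combined with Proposition~\ref{prop:7} then yields $\rho(\ADSC^+) \le \rho(\ADSC) = 1/r$ (non-triviality together with Proposition~\ref{prop:3} ensures $r<\infty$). The substance of the proof lies in the reverse inequality.

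For this reverse inequality, I plan to exhibit a pair of nodes of $\ADSC^+$ whose growth series has radius of convergence at most~$r$; the conclusion then follows from Proposition~\ref{prop:6}. Call an execution $x\in\M_{\alpha,\beta}$ \emph{positive} when its $\DSC$-path lies entirely in $\DSC^+$ (with the empty trace counting as positive), denote by $\#\M_{\alpha,\beta}^+(n)$ the count of positive executions of length $n$, and set $P_{\alpha,\beta}(z)=\sum_n \#\M_{\alpha,\beta}^+(n)z^n$ and $P_\alpha(z)=\sum_\beta P_{\alpha,\beta}(z)$. By Proposition~\ref{prop:5}, each $P_{\alpha,\beta}(z)$ is a finite sum, indexed by pairs of first-and-last positive cliques, of growth series of paths in $\ADSC^+$. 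It therefore suffices to show that $P_\alpha(z)$ has radius of convergence exactly~$r$ for some state~$\alpha$: then some $P_{\alpha,\beta}$, and in turn some $\ADSC^+$-path series, has radius at most~$r$.

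The key step is to bound the complementary bad executions via the spectral property. Point~\ref{item:7} of Proposition~\ref{lem:2} (in its contrapositive form) says null nodes of $\DSC$ are forward-closed, so a bad execution $x$ has a unique first null-node position~$j$ in its $\DSC$-path. Splitting the normal form $(c_1,\ldots,c_h)$ of $x$ at position~$j$ writes $x=x'\cdot y$ with $x'$ positive, leading from $\alpha$ to some state~$\gamma$, while $y$ satisfies $C_1(y)=c_j$ with $(\gamma,c_j)$ null. Point~\ref{item:10} of Proposition~\ref{lem:2} then yields a letter $a$ with $y\in\M_\gamma^a$; the map $x\mapsto(x',y)$ being injective (we recover $x=x'\cdot y$), this produces the generating series bound
$$
B_\alpha(z) := G_\alpha(z)-P_\alpha(z) \le P_\alpha(z)\cdot C(z),\quad C(z) := \sum_{\gamma\in X}\sum_{a\in\Sigma} G_\gamma^a(z),
$$
where $G_\gamma^a(z)=\sum_{\delta}G^a_{\gamma,\delta}(z)$. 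By Theorem~\ref{thr:1}, $r^a>r$ for every letter~$a$, so $C(z)$ has radius of convergence strictly greater than~$r$.

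The conclusion then follows: $G_\alpha(z)\le P_\alpha(z)(1+C(z))$. By Proposition~\ref{prop:4} and Remark~\ref{rem:2}, $G_\alpha(z)$ is rational with a pole at~$r$, hence $G_\alpha(z)\to\infty$ as $z\to r^-$, while $1+C(z)$ remains bounded near~$r$. Therefore $P_\alpha(z)\to\infty$ as $z\to r^-$, giving it radius of convergence at most~$r$; the reverse bound $P_\alpha\le G_\alpha$ forces equality. The main obstacle in carrying out this plan is the decomposition of bad executions: one must verify that splitting at the first null node genuinely yields an injective map $x\mapsto(x',y)$ with $y\in\M_\gamma^a$ for some letter $a$, since this is precisely the mechanism by which the spectral property (Theorem~\ref{thr:1}) transfers from growth generating series to the structure of the $\ADSC$.
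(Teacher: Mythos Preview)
Your proof is correct but takes a somewhat different route from the paper's. Both arguments rest on the same two structural facts---null nodes are forward-closed in $\DSC$ (Proposition~\ref{lem:2}, point~\ref{item:7}) and any execution whose normal form begins at a null node lies in some~$\M^a$ (point~\ref{item:10})---together with the spectral property (Theorem~\ref{thr:1}). The paper exploits these more directly at the matrix level: forward-closure of null nodes puts the adjacency matrix of $\ADSC$ in block upper-triangular form
\[
F=\begin{pmatrix}F^+&J\\0&F^0\end{pmatrix},
\]
whence $\rho(\ADSC)=\max(\rho^+,\rho^0)$; since paths confined to the null block $\ADSC^0$ correspond to executions in $\bigcup_a\M^a$, the spectral property gives $\rho^0<r^{-1}$, forcing $\rho^+=r^{-1}$. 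Your generating-function argument $G_\alpha\le P_\alpha(1+C)$ is essentially an analytic unfolding of this same block structure: rather than isolating the null block and bounding its spectral radius, you split each execution at its first null node and bound the null tails by a convergent factor. The paper's version is shorter and avoids the bookkeeping of the injective decomposition; yours is more self-contained in that it does not appeal to the $\max$ formula for the spectral radius of a block-triangular nonnegative matrix.
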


\begin{proof}
  Let $F$ be the adjacency matrix of $\ADSC$, and let $\rho=\rho(F)$. Then $\rho=r^{-1}$ according to Proposition~\ref{prop:7}. After a simultaneous permutation of the lines and columns of $F$ in order to put the positive nodes $(\alpha,c,i)$ of $\ADSC$ in first position, the matrix $F$ has the following form:
  \begin{gather*}
    F=\begin{pmatrix}
      F^+&J\\
      0&F^0
    \end{pmatrix}
  \end{gather*}
  where $F^0$ is the adjacency matrix of the digraph $\ADSC^0=\ADSC\setminus\ADSC^+$. Indeed, according to Proposition~\ref{lem:2}, point~\ref{item:7}, null nodes do not lead to positive nodes in~$\DSC$, whence the $0$-matrix on the left of~$F^0$. 

  Hence $\rho(F)=\max(\rho^+,\rho^0)$ on the one hand, where $\rho^+=\rho(F^+)$ and $\rho^0=\rho(F^0)$; and $\rho(F)=r^{-1}$ according to Proposition~\ref{prop:7} on the other hand.

    It follows from point~\ref{item:10} of Proposition~\ref{lem:2} that executions of $(\M,X,\bot)$, the normal form of which start with a null node, belong to $\bigcup_{a\in\Sigma}\M^a$. Hence $\rho^0<\rho$ since the concurrent system $(\M,X,\bot)$ satisfies the spectral property according to Theorem~\ref{thr:1}, and thus $\rho=\rho^+$, which was to be proved.
%
%
\end{proof}

\section{Probabilistic applications of the spectral property of concurrent systems}
\label{sec:appl-spectr-prop}

\subsection{Boundary at infinity and uniform measures}
\label{sec:boundary-at-infinity}

\subsubsection{Boundary at infinity of a trace monoid}
\label{sec:boundary-at-infinity-1}

Let $\M$ be a trace monoid. The \emph{boundary at infinity} of $\M$ is the topological space $\BM=\bigl\{(c_i)_{i\geq1}\tqs c_i\in\Cstar,\quad c_i\to c_{i+1}\quad\forall i\geq1\bigr\}$, with the topology induced by the product topology on~$\Cstar^{\bbZ_{\geq1}}$.

This \emph{ad hoc} construction is a short way for obtaining a compactification $\Mbar=\M\cup\BM$ of~$\M$. For each non empty trace $x\in\M$, of height $h=\height(x)$, let $(c_1,\dots,c_h)$ be the normal form of~$x$. We define $C_i:\M\to\C$ by $C_i(x)=c_i$ if $i\leq h$ and by $C_i(x)=\vd$ for $i>h$, and $c_i(\vd)=\vd$ for all integers $i\geq1$. We obtain thus a family $(C_i)_{i\geq1}$ of mappings $C_i:\Mbar\to\C$, where $(C_i)_{i\geq1}$ is defined on $\BM$ as the family of natural projections. We have in particular $\xi\in\BM$ if and only if $C_i(\xi)\neq\vd$ for all $i\geq1$.

Equip $\M$ with the partial order $\leq$ inherited from the monoid multiplication, defined by $x\leq y\iff\exists z\in\M\quad y=xz$. This partial order is extended on $\Mbar$ by setting:
\begin{gather*}
  \forall \xi,\xi'\in\Mbar\quad\xi\leq\xi'\iff\bigl(\forall i\geq1\quad C_i(\xi)\leq C_i(\xi')\bigr).
\end{gather*}

The \emph{visual cylinder} of base $x\in\M$ is $\up x=\{\xi\in\BM\tqs x\leq \xi\}$. Let $\FFF$ be the Borel \slgb\ on~$\BM$. Then the family $\{\emptyset\}\cup\{\up x\tqs x\in\M\}$ is a $\pi$-system that generates~$\FFF$. In particular, every probability measure $\nu$ on $(\BM,\FFF)$ is entirely determined by its values $\nu(\up x)$ on visual cylinders.

\medskip
Consider now a concurrent system $(\M,X,\bot)$ and an initial state~$\alpha$. Let $\overline{\M_\alpha}$ be the topological closure of $\M_\alpha$ in~$\Mbar$. We put: $\BM_\alpha=\Mbar_\alpha\cap\BM$. Elements of $\BM_\alpha$ correspond thus to ``infinite executions starting from~$\alpha$''.

When considering a probability measure $\nu$ on~$\BM_\alpha$, one might equivalently consider $\nu$ as a probability measure on $\BM$ such that $\nu(\up x)=0$ for all $x\notin\M_\alpha$. In particular, $\nu$~is entirely determined by its values $\nu(\up x)$ on elementary cylinders $\up x$ for $x\in\M_\alpha$.

\subsubsection{Existence of a uniform measure}
\label{sec:exist-unif-meas}

Following~\cite{abbes19:_markov}, we introduce the notion of uniform measure. 
%

\begin{definition}
  \label{def:8}
  
  Let $(\M,X,\bot)$ be a concurrent system. A \emph{uniform measure} is a family $(\nu_\alpha)_{\alpha\in X}$ such that $\nu_\alpha$ is a probability measure on $\BM_\alpha$ for every $\alpha\in X$, satisfying the chain condition: 
    \begin{gather}
      \label{eq:24}
    \forall\alpha\in X\quad\forall x\in\M_\alpha\quad\forall y\in\M_{\alpha\cdot x}\quad\nu_\alpha\bigl(\up(xy)\bigr)=\nu_\alpha(\up x)\nu_{\alpha\cdot x}(\up y)\:,
  \end{gather}
and such that, for some positive function $\Gamma:X\times X\to\bbR_{>0}$ and for some positive real~$t$:
  \begin{gather}
    \label{eq:17}
    \forall\alpha\in X\quad\forall x\in\M_\alpha\quad \nu_\alpha(\up x)=t^{|x|}\Gamma(\alpha,\alpha\cdot x).
  \end{gather}
If\/ $(\nu_\alpha)_{\alpha\in X}$ is a uniform measure, its \emph{induced fibred valuation} is $f=(f_\alpha)_{\alpha\in X}$ defined by $f_\alpha(x)=\nu_\alpha(\up x)$ for all $\alpha\in X$ and $x\in\M_\alpha$.
\end{definition}

Let $X$ be a set. A \emph{cocycle on $X$} is a positive function $\Gamma:X\times X\to \bbR_{>0}$ such that:\quad$\forall(\alpha,\beta,\gamma)\in X\times X\times X\quad\Gamma(\alpha,\gamma)=\Gamma(\alpha,\beta)\Gamma(\beta,\gamma)$.

Assume that $(\M,X,\bot)$ is accessible and that $f=(f_\alpha)_{\alpha\in X}$ is the induced fibred valuation of a uniform measure as in~(\ref{eq:17}). Then $\Gamma$ must be a cocycle. Indeed, let $\alpha,\beta,\gamma\in X$, and let $x\in\M_\alpha$ and $y\in\M_{\alpha\cdot x}$ be such that $\beta=\alpha\cdot x$ and $\gamma=\beta\cdot y$. Then evaluating $f_\alpha(xy)$ through~(\ref{eq:24}) on the one hand, and through~(\ref{eq:17}) on the other hand, yields: $t^{|xy|}\Gamma(\alpha,\gamma)=t^{|x|}\Gamma(\alpha,\beta)t^{|y|}\Gamma(\beta,\gamma)$, whence the sought relation $\Gamma(\alpha,\gamma)=\Gamma(\alpha,\beta)\Gamma(\beta,\gamma)$.


\medskip
The existence of a uniform measure for a concurrent system $(\M,X,\bot)$, follows from the following construction, inspired by the Patterson-Sullivan construction (see, \eg,~\cite[Th.~5.4]{coornaert93}) and detailed in~\cite{abbes19:_markov}. Let $r$ be the characteristic root of the concurrent system. For each state $\alpha\in X$ and for each real $t\in(0,r)$, let $\nu_{\alpha,t}$ be the discrete probability measure on $\M\subseteq\Mbar$ defined by:
\begin{gather}
  \label{eq:34}
  \nu_{\alpha,t}=\frac1{G_{\alpha}(t)}\sum_{x\in\M_\alpha}\delta_{\{x\}}t^{|x|}\,,
\end{gather}
where $\delta_{\{x\}}$ denotes the Dirac measure on~$x$. Then, for each $\alpha\in X$, the family $(\nu_{\alpha,t})_{t\in(0,r)}$ converges weakly, as $t\to r$, toward a probability measure $\nu_\alpha$ on $\BM_\alpha$ such that $\nu=(\nu_\alpha)_{\alpha\in X}$ is a uniform measure. The associated cocycle is the \emph{Parry cocycle}, given by:
\begin{gather}
  \label{eq:29}
    \forall(\alpha,\beta)\in X\times X\quad \Gamma(\alpha,\beta)=\lim_{\substack{t\to r\\t<r}}\frac{G_\beta(t)}{G_\alpha(t)}\in(0,+\infty),
  \end{gather}
and one has $\nu_\alpha(\up x)=r^{|x|}\Gamma(\alpha,\alpha\cdot x)$ for all $\alpha\in X$ and for all $x\in\M_\alpha$.

\medskip

The uniqueness of the uniform measure was a question left open in \cite{abbes19:_markov}. We prove it below in Section~\ref{sec:uniq-unif-meas} for irreducible concurrent systems. 

\subsubsection{Markov chain of states-and-cliques}
\label{sec:markov-chain-states}

Throughout this section~\ref{sec:markov-chain-states}, we consider a uniform measure $\nu=(\nu_\alpha)_{\alpha\in X}$ on a concurrent system $(\M,X\bot)$. For each state~$\alpha$, the probability measure $\nu_\alpha$ is entierely characterized by its values $\nu_\alpha(\up x)$ on visual cylinders~$\up x$, for $x$ ranging over~$\M_\alpha$. The construction of the boundary at infinity $\BM_\alpha$ would however appeal for the values of $\nu_\alpha$ on the ``standard cylinders'' $\{C_1=x_1,\ldots,C_k=x_k\}$ for $k\in\bbZ_{\geq1}$ and $(x_1,\ldots,x_k)\in\Cstar^k$. Put differently, the question is to determine the nature of the probabilitic process $(C_k(\xi))_{k\geq1}$ when $\xi$ is an infinite execution drawn at random according to the probability measure~$\nu_\alpha$.

A complete answer, that we recall now, is given in~\cite{abbes19:_markov} when considering the process of states-and-cliques rather than the process of cliques only. For every initial state~$\alpha$, and for every $\xi\in\BM_\alpha$, let $(\alpha_i)_{i\geq0}$ be the sequence of states encountered by the infinite execution $\xi$ at the successive stages of its normal form. That is to say, $(\alpha_i)_{i\geq0}$ is defined by $\alpha_i(\xi)=\alpha\cdot\bigl(C_1(\xi)\cdot\ldots\cdot C_i(\xi)\bigr)$ for $i\geq0$.

Then, with respect to the probability measure~$\nu_\alpha$, the sequence $(\alpha_i,C_{i+1})_{i\geq0}$ is a Markov chain~\cite[Th.~4.5]{abbes19:_markov}, called the \emph{Markov chain of states-and-cliques} (\MCSC).

Let $f=(f_\alpha)_{\alpha\in X}$ be the fibred valuation induced by the uniform measure~$\nu$ (see Definition~\ref{def:8}). The initial measure and the transition kernel (or matrix) of the $\MCSC$ can be related to $f$ through the M\"obius transform\footnote{If $\varphi:\C\to\bbR$ is a real-valued function, the \emph{M\"obius transform} \cite{rota64} of $\varphi$ is the function $h:\C\to\bbR$ defined by:
\begin{gather*}
  \forall c\in\C\quad h(c)=\sum_{c'\in\C\tqs c\leq c'}(-1)^{|c'|-|c|}\varphi (c').
\end{gather*}
Here we assume that~$\varphi$, and thus~$h$, is only defined on~$\C$. An extension of $h$ to $\M$ is also possible if $\varphi$ is defined on~$\M$, and relevant from the probabilistic point of view; see \cite{abbes19:_markov} for this extension, that we shall not need in this paper.
\\
\strut\quad Note that the restricted partial order $(\C,\leq)$ corresponds to the inclusion order on cliques seen as subsets of~$\Sigma$. Furthermore, it $\varphi:\C\to\bbR$ is $\varphi(c)=z^{|c|}$, then $h(\vd)=\mu(z)$, the M\"obius polynomial of~$\M$.} of each~$f_\alpha$, as we explain now.

For every initial state~$\alpha\in X$, the initial distribution of the \MCSC\ is $\delta_{\{\alpha\}}\otimes h_\alpha$, where $h_\alpha$ is the M\"obius transform of~$f_\alpha$. In other words, when taking at random under $\nu_\alpha$ an infinite execution $\xi$ starting from~$\alpha$, the law of its first clique $C_1(\xi)$ is given by~$h_\alpha$.

For any state $\alpha\in X$, let $g_\alpha:\Cstar\to\bbR_{\geq0}$ be the function defined by:
\begin{gather}
  \label{eq:26}
\forall c\in\Cstar\quad  g_\alpha(c)=\sum_{d\in\Cstar_\beta,\ c\to d}h_\beta(d),\quad\text{where $\beta=\alpha\cdot c$.}
\end{gather}

Then the transition matrix $M$ of the $\MCSC$ is independent of~$\alpha$, and given by~\cite[Th.~4.5]{abbes19:_markov}:
\begin{gather}
  \label{eq:30}
  M_{(\alpha,c),(\beta,d)}=\un(\beta=\alpha\cdot c)\un(c\to d)\frac{h_{\beta}(d)}{g_\alpha(c)},\quad\text{if $g_\alpha(c)\neq0$.}
\end{gather}

We shall need additional informations on the M\"obius transforms~$h_\alpha$. The two following properties, proved in~\cite[Th.~4.5]{abbes19:_markov}, can be seen as normalization conditions:
\begin{gather}
  \label{eq:27}
  \forall\alpha\in X\quad  h_\alpha(\vd)=0,\\
  \forall\alpha\in X\quad\forall c\in\Cstar_\alpha\quad h_\alpha(c)\geq0.
\end{gather}

Furthermore, assume that $f=(f_\alpha)_{\alpha\in X}$ is any family of real valed functions $f_\alpha:\M\to\bbR$, and let $(h_\alpha)_{\alpha\in X}$ be the corresponding M\"obius transforms. Assume that the chain relations $f_\alpha( xy)=f_\alpha(x)f_{\alpha\cdot x}(y)$ hold for all $x\in\M_\alpha$ and $y\in\M_{\alpha\cdot x}$, and define $(g_\alpha)_{\alpha\in X}$ as in~(\ref{eq:26}). Then~(\ref{eq:27}) only implies the following identities~\cite[Lemma~4.7]{abbes19:_markov}:
\begin{gather}
  \label{eq:25}
  \forall\alpha\in X\quad\forall c\in\Cstar_\alpha\quad
  h_\alpha(c)=f_\alpha(c)g_\alpha(c)
\end{gather}

Hence~(\ref{eq:25}) holds in particular for $f=(f_\alpha)_{\alpha\in X}$, the fibred valuation associated to the uniform measure. Therefore, the nodes $(\alpha,c)$ such that $g_\alpha(c)=0$ correspond also to those nodes such that $h_\alpha(c)=0$. In view of the form~(\ref{eq:30}) of the transition matrix~$M$ of the initial distribution of the \MCSC, they are therefore not reached by the \MCSC, hence the restriction $g_\alpha(c)\neq0$ in~(\ref{eq:30}) is of no matter.

\subsection{Uniqueness of the uniform measure}
\label{sec:uniq-unif-meas}

Let $\nu=(\nu_\alpha)_{\alpha\in X}$ be a uniform measure of a concurrent system $(\M,X,\bot)$, given by $\nu_\alpha(\up x)=s^{|x|}\Delta(\alpha,\alpha\cdot x)$ for some positive real $s$ and some cocycle $\Delta:X\times X\to\bbR_{>0}$. Our aim is to prove that $s=r$ and $\Delta=\Gamma$ where $r$ is the characteristic root of the system and $\Gamma$ is the Parry cocycle given in~(\ref{eq:29}), under the hypothesis that the system is irreducible.

The following observation relates $\Delta$ and the M\"obius matrix.

\begin{proposition}
  \label{prop:9}
  Let $(\M,X,\bot)$ be a concurrent system, and assume that $\nu_\alpha(\up x)=s^{|x|}\Delta(\alpha,\alpha\cdot x)$ defines a uniform measure.

Let $\mu=\mu(s)$ be the M\"obius matrix of the system evaluated at~$s$. Then: for any arbitrary state $\alpha_0\in X$, the positive vector $u=(u_\alpha)_{\alpha\in X}$ defined by $u_\alpha=\Delta(\alpha_0,\alpha)$ satisfies $u\in\ker \mu$.
\end{proposition}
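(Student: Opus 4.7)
The plan is to show that, for every fixed $\alpha \in X$, the identity
\begin{gather*}
  \sum_{\beta\in X}\mu_{\alpha,\beta}(s)\,\Delta(\alpha,\beta)=0
\end{gather*}
holds; multiplying by $\Delta(\alpha_0,\alpha)>0$ and invoking the cocycle relation $\Delta(\alpha_0,\alpha)\Delta(\alpha,\beta)=\Delta(\alpha_0,\beta)=u_\beta$ then yields the $\alpha$-th coordinate of $\mu(s)u$, so $\mu(s)u=0$ for all $\alpha$ gives $u\in\ker\mu(s)$. Regrouping cliques by their endpoint $\beta=\alpha\cdot c$, the identity above is equivalent to
\begin{gather*}
  \sum_{c\in\C_\alpha}(-1)^{|c|}s^{|c|}\Delta(\alpha,\alpha\cdot c)=0,
  \quad\text{i.e.,}\quad
  \sum_{c\in\C_\alpha}(-1)^{|c|}\nu_\alpha(\up c)=0,
\end{gather*}
since by~(\ref{eq:17}) one has $\nu_\alpha(\up c)=s^{|c|}\Delta(\alpha,\alpha\cdot c)$. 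So the whole proof reduces to proving this last ``M\"obius--inversion at infinity'' identity for any uniform measure.

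The identity is established by inclusion--exclusion applied to the covering $\BM_\alpha=\bigcup_{a\in\Sigma_\alpha}(\up a\cap\BM_\alpha)$, which holds because every $\xi\in\BM_\alpha$ has a non empty first clique $C_1(\xi)\in\Cstar_\alpha$, hence contains at least one letter $a$, and that letter necessarily lies in $\Sigma_\alpha$ since a prefix of an execution from $\alpha$ is itself an execution from $\alpha$. The key step in the inclusion--exclusion is the identification, for a non empty $S\subseteq\Sigma_\alpha$, of $\bigcap_{a\in S}\up a$: using the fact that $a,b\in\Sigma$ have a common upper bound in $(\M,\leq)$ if and only if $(a,b)\in I$ (in which case $a\vee b=ab$), one sees that this intersection is empty unless the letters of $S$ pairwise commute, and in the latter case equals $\up c_S$ where $c_S=\prod_{a\in S}a$ is the associated non empty clique. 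Moreover $\nu_\alpha(\up c_S)=0$ when $c_S\notin\Cstar_\alpha$, because any prefix of an element of $\BM_\alpha$ belongs to $\M_\alpha$. Thus non empty subsets $S$ contributing a non zero term are in bijection with elements of $\Cstar_\alpha$, and inclusion--exclusion gives $1=\nu_\alpha(\BM_\alpha)=\sum_{c\in\Cstar_\alpha}(-1)^{|c|+1}\nu_\alpha(\up c)$, which, combined with the trivial $c=\vd$ term $\nu_\alpha(\up\vd)=1$, rewrites as the desired alternating sum.

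The main obstacle, and the only genuine verification, is the geometric fact that intersections of visual cylinders $\bigcap_{a\in S}\up a$ collapse to $\up c_S$ exactly when $S$ forms a clique and are otherwise empty; once that is in hand the rest is formal manipulation using the cocycle property of $\Delta$ and the definition of the M\"obius matrix. The remaining routine checks are the exchange of the sum over $c$ with the sum over endpoints $\beta$, and the fact that the decomposition $\C_\alpha=\sum_{\beta\in X}\C_{\alpha,\beta}$ matches the block structure used to define $\mu_{\alpha,\beta}(s)$.
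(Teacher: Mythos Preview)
Your proof is correct and follows essentially the same route as the paper: both arguments reduce the claim $\mu(s)u=0$ via the cocycle relation to the single identity $\sum_{c\in\C_\alpha}(-1)^{|c|}s^{|c|}\Delta(\alpha,\alpha\cdot c)=0$, which is exactly the statement $h_\alpha(\vd)=0$ for the M\"obius transform of~$f_\alpha$. The only difference is that the paper simply quotes this identity from~(\ref{eq:27}) (proved in \cite{abbes19:_markov}), whereas you supply a self-contained proof by inclusion--exclusion on the covering $\BM_\alpha=\bigcup_{a\in\Sigma_\alpha}\up a$, using that $\bigcap_{a\in S}\up a=\up c_S$ when $S$ is a clique and is empty otherwise. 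Your derivation is a clean and direct way of seeing why the identity holds, and it makes the proposition independent of the machinery developed for the Markov chain of states-and-cliques; the paper's version is shorter only because it outsources that step.
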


\begin{proof}
  Let $f=(f_\alpha)_{\alpha\in X}$ be the fibred valuation induced by $\nu=(\nu_\alpha)_{\alpha\in X}$, and let $h_\alpha$ be the M\"obius transform of $f_\alpha$ for each $\alpha\in X$. We write down the identity $h_\alpha(\vd)=0$ from~(\ref{eq:27}), which yields on the one hand:
  \begin{gather}
    \label{eq:33}
    \sum_{c\in\C_\alpha}(-1)^{|c|}s^{|c|}\Delta(\alpha,\alpha\cdot c)=0.
  \end{gather}

On the other hand, the vector $v=\mu\cdot u$ evaluates as follows:
  \begin{align*}
    v_\alpha&=\sum_{\beta\in X}\Delta(\alpha_0,\beta)\Bigl(\sum_{c\in\C_{\alpha,\beta}}(-1)^{|c|}s^{ |c|}\Bigr).
  \end{align*}
  Writing $\Delta(\alpha_0,\beta)=\Delta(\alpha_0,\alpha)\Delta(\alpha,\beta)$, the above expression is thus proportional to the left member of~(\ref{eq:33}), which vanishes.
\end{proof}

Our next observation is the following simple result.

\begin{lemma}
  \label{lem:5}
  Let $(\M,X,\bot)$ be a concurrent system. Assume that $f=(f_\alpha)_{\alpha\in X}$ is the fibred valuation induced by some uniform measure~$\nu$, and let $h_\alpha$ denote the M\"obius transform of~$f_\alpha$. Then $h_\alpha(c)>0$ for every positive node $(\alpha,c)\in\DSC^+$.
\end{lemma}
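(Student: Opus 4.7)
The plan is to give $h_\alpha(c)$ its probabilistic meaning as the probability, under $\nu_\alpha$, that the first clique of a random infinite execution is exactly $c$, and then to use the $(\alpha,c)$-protection to force this event to occur with positive probability.

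First I would establish the probabilistic interpretation. For any clique $c\in\C$, the normal form of $c$ viewed as a trace is the one-term sequence $(c)$, so the visual order condition $c\leq\xi$ is equivalent to $c\leq C_1(\xi)$, which on $\C$ is inclusion of subsets of~$\Sigma$. Hence $f_\alpha(c)=\nu_\alpha(\up c)=\nu_\alpha\bigl(\{\xi\tqs c\subseteq C_1(\xi)\}\bigr)$. M\"obius inversion on the Boolean lattice of cliques then gives
\begin{gather*}
  h_\alpha(c)=\sum_{c'\geq c}(-1)^{|c'|-|c|}f_\alpha(c')=\nu_\alpha\bigl(\{\xi\tqs C_1(\xi)=c\}\bigr).
\end{gather*}
This is consistent with the normalization $h_\alpha(\vd)=0$ recalled in~(\ref{eq:27}), since $C_1(\xi)\neq\vd$ for every infinite execution~$\xi$.

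Second, fix a positive node $(\alpha,c)\in\DSC^+$ and let $x\in\M_\alpha$ be an $(\alpha,c)$-protection. The plan is then to show the set-inclusion $\up x\subseteq\{\xi\tqs C_1(\xi)=c\}$. Given $\xi\in\up x$, I would approximate $\xi$ by its finite truncations $\xi_n=C_1(\xi)\cdot\ldots\cdot C_n(\xi)\in\M$. For every $n\geq\height(x)$, the visual inequality $x\leq\xi$ yields $C_i(x)\leq C_i(\xi)=C_i(\xi_n)$ for $i\leq \height(x)$, which (using that visual order and divisibility order agree on $\M$) gives a factorization $\xi_n=x\cdot y_n$ for some $y_n\in\M$; moreover $y_n\in\M_{\alpha\cdot x}$ because $\alpha\cdot\xi_n\neq\bot$. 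The protection property applied to $y_n$ yields $C_1(xy_n)=c$, and since the Cartier--Foata normal form of $\xi_n$ is precisely the length-$n$ prefix of the normal form of~$\xi$, one has $C_1(\xi_n)=C_1(\xi)$. Combining, $C_1(\xi)=c$.

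Finally, since $\nu_\alpha$ is a uniform measure, $\nu_\alpha(\up x)=s^{|x|}\Delta(\alpha,\alpha\cdot x)>0$ by positivity of $s$ and of the cocycle~$\Delta$. Hence
\begin{gather*}
  h_\alpha(c)=\nu_\alpha\bigl(\{C_1=c\}\bigr)\geq\nu_\alpha(\up x)>0,
\end{gather*}
which is the desired conclusion. The main technical point, and the only place requiring care, is the truncation step: one must check that $x\leq\xi_n$ in $\M$ lets us write $\xi_n=xy_n$ with $y_n$ actually a valid execution from $\alpha\cdot x$, and that the first clique of $\xi_n$ coincides with the first clique of~$\xi$. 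Both are standard consequences of the Cartier--Foata normal form and of the fact that the divisibility order on traces coincides with the pointwise order on normal forms.
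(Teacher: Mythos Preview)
Your proof is correct and follows essentially the same approach as the paper's: show $\up x\subseteq\{C_1=c\}$ for an $(\alpha,c)$-protection~$x$, identify $h_\alpha(c)$ with $\nu_\alpha(C_1=c)$, and conclude from $\nu_\alpha(\up x)>0$. The only differences are expository: the paper obtains $h_\alpha(c)=\nu_\alpha(C_1=c)$ by citing the fact that the initial distribution of the \MCSC\ is $\delta_{\{\alpha\}}\otimes h_\alpha$, whereas you rederive it directly by M\"obius inversion on the Boolean lattice of cliques; and the paper asserts the inclusion $\up x\subseteq\{C_1=c\}$ in one line, whereas you spell out the truncation argument explicitly.
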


\begin{proof}
Let $(\alpha,c)$ be a positive node, and let $x\in\M_\alpha$ be an $(\alpha,c)$-protection. With the language of infinite executions, one has: $\{\xi\in\BM_\alpha\tqs C_1(\xi)=c\}\supseteq\up x$. Therefore, $\nu_\alpha(C_1=c)\geq\nu_\alpha(\up x)>0$. But $\nu_\alpha(C_1=c)=h_\alpha(c)$ since the initial measure of the $\MCSC$ with initial state $\alpha$ is $\delta_{\{\alpha\}}\otimes h_\alpha$, as recalled in Section~\ref{sec:markov-chain-states}. Therefore $h_\alpha(c)>0$.  
\end{proof}

A key lemma is now the following.

\begin{lemma}
  \label{lem:4}
  Let $(\M,X,\bot)$ be a non trivial irreducible concurrent system, and let $F$ be the adjacency matrix of the corresponding\/ $\ADSC^+$. Assume that $\nu=(\nu_\alpha)_{\alpha\in X}$ is a uniform measure with induced fibred valuation $f=(f_\alpha)_{\alpha\in X}$ given by $f_\alpha(x)=s^{|x|}\Delta(\alpha,\alpha\cdot x)$ for all $\alpha\in X$ and $x\in\M_\alpha$, for some positive real $s$ and some cocycle~$\Delta$. Let $h_\alpha$ denote the M\"obius transform of~$f_\alpha$. Fix an arbitrary state $\alpha_0\in X$, and let $u$ be the vector defined by:
  \begin{gather}
    \label{eq:35}
    u(\alpha,c,i)=\frac1{s^{i-1}}\Delta(\alpha_0,\alpha)h_\alpha(c),
  \end{gather}
for all $\alpha\in X$, $c\in\Cstar_\alpha$ and $i\in\{1,\ldots,|c|\}$ such that $(\alpha,c,i)\in\ADSC^+$. Then $s^{-1}$ is an eigenvalue of\/~$F$ for which  $u$ is a positive right eigenvector.
\end{lemma}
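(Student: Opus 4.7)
The plan is to extend $u$ to a vector $\tilde u$ on all of $\ADSC$ by the same formula $\tilde u(\alpha,c,i)=s^{-(i-1)}\Delta(\alpha_0,\alpha)h_\alpha(c)$, verify the eigenequation $F'\tilde u=s^{-1}\tilde u$ against the adjacency matrix $F'$ of $\ADSC$, and then restrict back to $\ADSC^+$ using the block upper-triangular structure of $F'$. Positivity of $\tilde u$ on $\ADSC^+$ is immediate from $\Delta>0$, $s>0$, and Lemma~\ref{lem:5}, while $\tilde u\geq 0$ elsewhere follows from the nonnegativity of $h_\alpha$ recalled in Section~\ref{sec:markov-chain-states}.

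The eigenequation for $\tilde u$ I would check coordinatewise. For $(\alpha,c,i)$ with $i<|c|$, the unique out-arc in $\ADSC$ goes to $(\alpha,c,i+1)$, and the geometric factor gives $\tilde u(\alpha,c,i+1)=s^{-1}\tilde u(\alpha,c,i)$. For $(\alpha,c,|c|)$ with $\beta=\alpha\cdot c$, the out-arcs go to $(\beta,d,1)$ for every $d\in\Cstar_\beta$ with $c\to d$; summing and applying the cocycle identity $\Delta(\alpha_0,\beta)=\Delta(\alpha_0,\alpha)\Delta(\alpha,\beta)$, formula~(\ref{eq:25}) in the form $h_\alpha(c)=s^{|c|}\Delta(\alpha,\beta)g_\alpha(c)$, and the definition $g_\alpha(c)=\sum_{d:c\to d}h_\beta(d)$, both sides collapse to $\Delta(\alpha_0,\beta)g_\alpha(c)$. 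By point~\ref{item:7} of Proposition~\ref{lem:2}, no arc of $\DSC$ leads from a null node to a positive one, so ordering positive nodes of $\ADSC$ first, the matrix $F'$ takes the block upper-triangular form
$$
F'=\begin{pmatrix} F & J \\ 0 & F^0 \end{pmatrix}
$$
with $F^0$ the adjacency of $\ADSC\setminus\ADSC^+$. Writing $\tilde u=(u,w)$ accordingly with $w\geq 0$, the eigenequation splits into $Fu+Jw=s^{-1}u$ and $F^0w=s^{-1}w$.

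The decisive step is to show $w=0$, after which $Fu=s^{-1}u$ immediately. Since $J,w\geq 0$, we get $Fu\leq s^{-1}u$ componentwise; with $u>0$, the Collatz--Wielandt inequality for the nonnegative matrix $F$ yields $\rho(F)\leq s^{-1}$, hence $s\leq 1/\rho(F)=r$ by Corollary~\ref{thr:2}. Suppose for contradiction that $w\neq 0$. Then $w$ is a nonnegative eigenvector of $F^0$ for the eigenvalue $s^{-1}$, so $s^{-1}\leq\rho(F^0)<r^{-1}$, the strict inequality being the one established in the proof of Corollary~\ref{thr:2} via the spectral property (Theorem~\ref{thr:1}). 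This forces $s>r$, contradicting $s\leq r$. Hence $w=0$, $Jw=0$, and $Fu=s^{-1}u$ with $u>0$, as required.

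The main obstacle is exactly the vanishing of $w$: the contradiction between $s\leq r$ (from Collatz--Wielandt on the positive block $F$) and the consequence $s>r$ of a nontrivial $w$ (coming from the eigenvector condition on $F^0$) rests crucially on the strict spectral gap $\rho(F^0)<r^{-1}$, which is exactly the content of the spectral property for irreducible concurrent systems. Once that gap is in hand, the remaining manipulations are routine bookkeeping with the M\"obius transform and the cocycle identity.
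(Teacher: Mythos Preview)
Your proof is correct and takes a genuinely different route from the paper. The paper works directly on $\ADSC^+$: it checks the eigenequation coordinatewise, and at a node $(\alpha,c,|c|)$ it writes
\[
(Fu)_{(\alpha,c,|c|)}=\sum_{d\in\Cstar_\beta\tqs c\to d}u_{(\beta,d,1)}=\Delta(\alpha_0,\beta)\,g_\alpha(c),
\]
then unwinds via~(\ref{eq:25}) and the cocycle identity. This is short, but the first equality silently sums over \emph{all} $d$ with $c\to d$, whereas $F$ only records arcs between \emph{positive} nodes; the identification with $g_\alpha(c)$ therefore tacitly uses $h_\beta(d)=0$ whenever $(\beta,d)$ is null---precisely the nontrivial direction of Theorem~\ref{thr:5}, which is logically independent of the present lemma but appears only later in the text. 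Your detour through the full $\ADSC$, the block decomposition of~$F'$, and the Collatz--Wielandt/spectral-gap argument forcing $w=0$ reproves exactly this vanishing without forward reference, at the cost of a longer argument: in effect you fold the implication ``null $\Rightarrow h_\alpha(c)=0$'' into the proof of the lemma itself. Both routes ultimately rest on the spectral property (Theorem~\ref{thr:1}); the paper invokes it through Theorem~\ref{thr:5}, you through the strict inequality $\rho(F^0)<r^{-1}$ extracted from the proof of Corollary~\ref{thr:2}.
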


\begin{proof}
  Let $J$ denote the set of nodes of~$\ADSC^+$. Lemma~\ref{lem:5} implies that $h_\alpha(c)>0$ for every $(\alpha,c)\in\DSC^+$. Hence $u$ is a positive vector.

  We prove that $Fu=(1/s)u$, which will prove the remaining of the statement. For any $(\alpha,c,i)\in J$ such that $i<|c|$, the row $F_{(\alpha,c,i),\bullet}$ is identically zero, except for the entry of the column indexed by $(\alpha,c,i+1)$. Therefore the identity $(Fu)_{(\alpha,c,i)}=(1/s)u(\alpha,c,i)$ is obvious.

  We consider for each $\alpha\in X$ the function $g_\alpha:\Cstar\to\bbR_{>0}$ defined as in~(\ref{eq:26}), and then the identity~(\ref{eq:25}) holds. Therefore we compute as follows for $i=|c|$, putting $\beta=\alpha\cdot c$\,:
\begin{align*}
  (Fu)_{(\alpha,c,|c|)}&=\sum_{d\in\Cstar_\beta\tqs c\to d}u_{(\beta,d,1)}\\
                      &=\Delta(\alpha_0,\beta)g_\alpha(c)\\
                      &=\Delta(\alpha_0,\beta)\frac{h_\alpha(c)}{s^{|c|}\Delta(\alpha,\beta)}&&
                                                                                               \text{using~(\ref{eq:25})}\\
                      &=\frac 1s\Delta(\alpha_0,\alpha)\frac{h_\alpha(c)}{s^{|c|-1}}&&\text{using the cocycle property of $\Delta$}\\
  &=\frac1su(\alpha,c,|c|).
\end{align*}
The proof is complete.
\end{proof}

This implies at once, \emph{via} strong results on non negative reducible matrices, the following property of~$\ADSC^+$.

\begin{proposition}
  \label{prop:10}
  Let $(\M,X,\bot)$ be an irreducible concurrent system of characteristic root~$r$. Then, among the strongly connected components of\/ $\ADSC^+$, those of spectral radius $r^{-1}$ are exactly the terminal components.
\end{proposition}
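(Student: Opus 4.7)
The plan is to feed into Lemma~\ref{lem:4} the canonical uniform measure produced in Section~\ref{sec:exist-unif-meas} by the Patterson--Sullivan construction. Since the concurrent system is irreducible (hence non trivial as soon as $\Sigma\neq\emptyset$, the trivial case being vacuous), this construction delivers a uniform measure of the form $\nu_\alpha(\up x)=r^{|x|}\Gamma(\alpha,\alpha\cdot x)$, where $\Gamma$ is the Parry cocycle of~(\ref{eq:29}). Applying Lemma~\ref{lem:4} with $s=r$ and $\Delta=\Gamma$, I would obtain a strictly positive right eigenvector $u$ of the adjacency matrix $F$ of $\ADSC^+$ attached to the eigenvalue $r^{-1}$. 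Combined with Corollary~\ref{thr:2}, which identifies $\rho(F)$ with $r^{-1}$, the proposition reduces to a spectral analysis, block by block, of the non negative matrix $F$.

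From there I would examine each strongly connected component $C$ of $\ADSC^+$ separately. In the terminal case, no arc of $\ADSC^+$ leaves $C$, so $(Fu)_i=(F|_C\,u|_C)_i$ for every $i\in C$; hence $u|_C>0$ is a positive right eigenvector of the irreducible block $F|_C$ with eigenvalue $r^{-1}$, and classical Perron--Frobenius forces $\rho(F|_C)=r^{-1}$. In the non terminal case, some $i_0\in C$ has an arc leaving $C$ toward a node $j$ with $u_j>0$; this yields $F|_C\,u|_C\leq r^{-1}u|_C$ with strict inequality at coordinate~$i_0$. Pairing this sub-invariance with the left Perron eigenvector of the irreducible matrix $F|_C$ then gives $\rho(F|_C)<r^{-1}$, completing the equivalence.

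The main nuisance I anticipate concerns strongly connected components reduced to a single node without self-loop: for those $F|_C$ is the zero matrix and the usual Perron--Frobenius arguments must be short-circuited. Such a component cannot be terminal, for otherwise the eigenvector identity would force $0=(Fu)_i=r^{-1}u_i$, contradicting positivity of $u$; and in the non terminal case $\rho(F|_C)=0<r^{-1}$ is trivial. An alternative, one-step approach in line with the remark from Section~\ref{sec:introduction} would be to cite directly the Perron--Frobenius theory for reducible non negative matrices (see, \emph{e.g.},~\cite{seneta81}): a non negative matrix admitting a strictly positive right eigenvector for its spectral radius has, among its strongly connected components, basic classes coinciding exactly with the terminal ones. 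The sketch above is nothing but a self-contained instance of this general result.
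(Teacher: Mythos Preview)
Your proposal is correct and follows essentially the same approach as the paper: both use the existence of the uniform measure together with Lemma~\ref{lem:4} to produce a strictly positive $r^{-1}$-eigenvector of the adjacency matrix of $\ADSC^+$, and then invoke Corollary~\ref{thr:2} plus Perron--Frobenius theory for reducible non negative matrices. The only difference is cosmetic: the paper cites the needed fact directly from~\cite{rothblum14} (Fact~12(b)), whereas you unpack that citation into the self-contained block-by-block argument --- which is exactly the alternative you yourself mention in your last paragraph.
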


\begin{proof}
  Let $F$ be the adjacency matrix of~$\ADSC^+$. Using the existence of a uniform measure associated with the characteristic root~$r$ on the one hand, and Lemma~\ref{lem:4} on the other hand, we obtain the existence of a positive $r^{-1}$-eigenvector of~$F$. Since the spectral radius of $\ADSC^+$ is $r^{-1}$ according to Corollary~\ref{thr:2}, the result follows according to \cite[Fact 12(b)]{rothblum14}.
\end{proof}

\begin{remark}
  The reference \cite{rothblum14} in the above proof reveals a deep connection with the structure of non negative \emph{reducible} matrices. These algebraic aspects are further discussed in Section~\ref{sec:an-alternative-point}.
\end{remark}

\begin{theorem}
  \label{thr:4}
  Let $(\M,X,\bot)$ be an irreducible concurrent system. Then there exists a unique uniform measure $\nu=(\nu_\alpha)_{\alpha\in X}$ associated to the concurrent system. This uniform measure is entirely characterized by:
  \begin{gather*}
    \forall \alpha\in X\quad\forall x\in\M_\alpha\quad\nu_\alpha(\up x)=r^{|x|}\Gamma(\alpha,\alpha\cdot x),
  \end{gather*}
where $r$ is the characteristic root of the concurrent system, and\/ $\Gamma:X\times X\to\bbR_{>0}$ is the Parry cocycle introduced in~{\normalfont(\ref{eq:29})}.
\end{theorem}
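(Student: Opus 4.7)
The plan is to prove that any uniform measure $\nu=(\nu_\alpha)_{\alpha\in X}$, written as in Definition~\ref{def:8} in the form $\nu_\alpha(\up x)=s^{|x|}\Delta(\alpha,\alpha\cdot x)$, must satisfy $s=r$ and $\Delta=\Gamma$; the existence part is already provided by the construction~(\ref{eq:34})--(\ref{eq:29}).

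First I would show that $s=r$. Applying Lemma~\ref{lem:4} to $\nu$ produces a positive right eigenvector $u$ of the adjacency matrix $F$ of $\ADSC^+$ with eigenvalue $s^{-1}$. By Corollary~\ref{thr:2}, $\rho(F)=r^{-1}$. Now invoke the classical fact that, for any nonnegative matrix, an eigenvalue admitting a positive right eigenvector necessarily coincides with the spectral radius: indeed, taking any nonzero nonnegative left Perron eigenvector $w$ of $F$ for $\rho(F)$, one has $w^\top F u=\rho(F)(w^\top u)=s^{-1}(w^\top u)$; since $u>0$ and $w\neq0$ force $w^\top u>0$, this yields $s^{-1}=\rho(F)=r^{-1}$.

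Once $s=r$ is established, both $\Delta$ and the Parry cocycle $\Gamma$ supply, through Lemma~\ref{lem:4}, positive right eigenvectors $u_\Delta$ and $u_\Gamma$ of $F$ for the eigenvalue $r^{-1}$. On any terminal strongly connected component $T$ of $\ADSC^+$, Proposition~\ref{prop:10} ensures that the block $F|_T$ is nonnegative, irreducible, and of spectral radius $r^{-1}$; the standard Perron-Frobenius theorem then yields uniqueness up to a positive scalar of a positive right eigenvector of $F|_T$. Hence $u_\Delta|_T=\lambda_T u_\Gamma|_T$ for some $\lambda_T>0$, which via the explicit form~(\ref{eq:35}) translates into the identity $\Delta(\alpha_0,\alpha)\,h_\alpha(c)=\lambda_T\,\Gamma(\alpha_0,\alpha)\,h_\alpha^\Gamma(c)$ for every node $(\alpha,c,i)$ of $T$, where $h_\alpha$ and $h_\alpha^\Gamma$ denote the M\"obius transforms induced by $\Delta$ and $\Gamma$ respectively.

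To conclude that $\Delta=\Gamma$, I would propagate this identity to all of $X$ using accessibility of the system (which relates $\Delta(\alpha_0,\alpha)$ for arbitrary $\alpha$ to values at states reached by terminal components, through the chain condition~(\ref{eq:24}) and the cocycle relation), and finish by the normalization $\Delta(\alpha_0,\alpha_0)=1=\Gamma(\alpha_0,\alpha_0)$, which forces each $\lambda_T=1$. The main obstacle lies exactly here: since $\ADSC^+$ may host several terminal strongly connected components (see the example in Section~\ref{sec:sever-term-comp}), the scalars $\lambda_T$ are a priori independent, and the rigidity imposed by the cocycle law and the chain condition must be exploited carefully. An alternative route, suggested by the remark following Proposition~\ref{prop:10} and the reference to~\cite{rothblum14}, is to work directly at the level of $\ker\mu(r)$ in view of Proposition~\ref{prop:9}, using the structure theory of reducible nonnegative matrices to show that the positive vectors in $\ker\mu(r)$ form a single ray, on which both $\Delta(\alpha_0,\cdot)$ and $\Gamma(\alpha_0,\cdot)$ lie and where the cocycle normalization at $\alpha_0$ selects a unique representative.
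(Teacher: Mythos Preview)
Your overall strategy matches the paper's: show $s=r$ via the positive eigenvector from Lemma~\ref{lem:4} and Corollary~\ref{thr:2}, then deduce $\Delta=\Gamma$ from Perron--Frobenius uniqueness on a terminal component of~$\ADSC^+$. The argument for $s=r$ is correct and essentially identical to the paper's (which cites~\cite{rothblum14} in place of your direct $w^\top F u$ computation).

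The genuine gap is exactly where you flag it, and neither of your two proposed fixes closes it. The claim that the normalization $\Delta(\alpha_0,\alpha_0)=\Gamma(\alpha_0,\alpha_0)=1$ forces $\lambda_T=1$ is not justified: substituting $\alpha=\alpha_0$ into the proportionality only yields $h_{\alpha_0}(c)=\lambda_T\,h^\Gamma_{\alpha_0}(c)$ for those cliques $c$ with $(\alpha_0,c)$ lying in the chosen terminal component~$N$, and nothing tells you these partial families coincide or have equal sums. Your alternative route through $\ker\mu(r)$ is also blocked: $\mu(r)$~is not a nonnegative matrix (its entries are alternating sums $\sum(-1)^{|c|}r^{|c|}$), so the Rothblum-type structure theory for reducible nonnegative matrices does not apply to it directly; moreover, the one-dimensionality of $\ker\mu(r)$ is Corollary~\ref{cor:1}, which the paper derives \emph{from} the present theorem, so invoking it here would be circular.

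The paper closes the gap by shifting from eigenvectors to the transition matrix of the~$\MCSC$. From the proportionality $u_\Delta\rest{T}=\lambda_T\,u_\Gamma\rest{T}$ one computes, via~(\ref{eq:30}) and~(\ref{eq:25}), that the two transition matrices agree on rows indexed by~$N$: the constant $\lambda_T$ appears once in the numerator and once in the denominator, and cancels. One then chooses an $(\alpha_0,c)$-protection $z$, maximized among executions of its height, so that its $\DSC$-path lies entirely in~$N$ and $\up z$ coincides with a single $\MCSC$-cylinder. For an arbitrary target state~$\alpha$, pick $x$ with $(\alpha_0\cdot z)\cdot x=\alpha$ and decompose $\up(zx)$ as a finite union of $\MCSC$-cylinders of height $\height(z)+\height(x)$; the agreement of transition matrices along the path then yields $\nu_{\alpha_0}\bigl(\up(zx)\bigr)=\nu'_{\alpha_0}\bigl(\up(zx)\bigr)$, hence $\Delta(\alpha_0,\alpha)=\Gamma(\alpha_0,\alpha)$. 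The key idea you are missing is that working with \emph{transition probabilities}---where the undetermined scalar $\lambda_T$ cancels---rather than with the eigenvector entries themselves is what bypasses the obstacle of pinning down~$\lambda_T$.
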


\begin{proof}
  The existence part was the topic of Section~\ref{sec:exist-unif-meas}, hence we focus on proving the uniqueness. Let $(\nu_\alpha)_{\alpha\in X}$ be a uniform measure and let $f=(f_\alpha)_{\alpha\in X}$ be the induced fibred valuation. Let $s>0$ and $\Delta:X\times X\to\bbR_{>0}$ be the cocycle such that:
  \begin{gather}
    \label{eq:28}
    \forall \alpha\in X\quad\forall x\in\M_\alpha\quad f_\alpha(x)=s^{|x|}\Delta(\alpha,\alpha\cdot x).
  \end{gather}
  For each $\alpha\in X$, let $h_\alpha:\C\to\bbR$ be the M\"obius transform of~$f_\alpha$, and let $g_\alpha$ be defined as in~(\ref{eq:26}) with respect to~$h_\alpha$.

  We first prove that $s=r$. Fix an arbitrary state $\alpha_0\in X$. Let $J$ be the set of nodes of $\ADSC$, and let $u:J\to\bbR$ be defined as in Lemma~\ref{lem:4}. Then $u$ is a positive $s^{-1}$-eigenvector of~$F$. Since $F$ has spectral radius $r^{-1}$ according to Corollary~\ref{thr:2}, it follows from \cite[Fact 6a]{rothblum14} that $s^{-1}=r^{-1}$, and thus $s=r$.

It remains only to prove that $\Delta=\Gamma$, where $\Gamma$ is the Parry cocycle. Consider a terminal component $T$ of $\ADSC^+$. It corresponds in the obvious way to a terminal component $\Ttilde$ in~$\DSC^+$.
 Let $N$ be the set of nodes of $\DSC^+$ belonging to~$\Ttilde$. Let $\nu'=(\nu'_\alpha)_{\alpha\in X}$ be the uniform measure associated with $f'_\alpha(x)=r^{|x|}\Gamma(\alpha,\alpha\cdot x)$, defined for $\alpha\in X$ and $x\in\M_\alpha$, where $\Gamma$ is the Parry cocyle. Both uniform measures, $\nu$~and~$\nu'$, give raise to a Markov chain of states-and-cliques on the nodes of $\DSC$. We claim that:
\begin{intermediate}{$(\dag)$}
  The transition matrices of these two Markov chains are equal on $N\times N$.
\end{intermediate}

Since the state $\alpha_0$ was arbitrary, we assume without loosing generality that it has be chosen in such a way that $(\alpha_0,c)\in N$ for at least some clique~$c$. Now consider the vector $u'$ defined as $u$ was defined, but relatively to the uniform measure~$\nu'$. For the same reasons as for~$u$, the restriction of $u'$ to $T$ is a Perron eigenvector of the adjacency matrix of~$T$, which is irreducible. Henceforth $u$ and $u'$ are proportional on the nodes of~$T$.

Let $h'_\alpha$ denote the M\"obius transform of~$f'_\alpha$. For some positive constant~$k$, one has thus:
\begin{gather}
  \label{eq:32}
  \forall (\alpha,c)\in N\quad \Delta(\alpha_0,\alpha)h_\alpha(c)=k\Gamma(\alpha_0,\alpha)h'_\alpha(c).
\end{gather}
It yields in particular, using the cocycle identities $\Delta(\alpha_0,\alpha_0)=\Gamma(\alpha_0,\alpha_0)=1$\,:
\begin{gather}
  \label{eq:36}
  \forall c\in\Cstar_{\alpha_0}\quad (\alpha_0,c)\in N\implies h_{\alpha_0}(c)=kh'_{\alpha_0}(c).
\end{gather}

Let $M$ and $M'$ be the transition matrices of the \MCSC\ associated with~$\nu$ and with~$\nu'$. Recalling the identity $h_\alpha=f_\alpha g_\alpha$ from~(\ref{eq:25}), we compute according to~(\ref{eq:30}):
\begin{align}
  \notag
  M_{(\alpha_0,c),(\beta,d)}&=\un(\beta=\alpha_0\cdot c)\un(c\to d)\frac{h_\beta(d)}{g_{\alpha_0(c)}}\\
\notag                            &=\un(\beta=\alpha_0\cdot c)\un(c\to d)r^{|c|}\Delta(\alpha_0,\beta)\frac{h_\beta(d)}{h_{\alpha_0}(c)}\\
\label{eq:31}\tag{*}  &=\un(\beta=\alpha_0\cdot c)\un(c\to d)r^{|c|}\Gamma(\alpha_0,\beta)\frac{h'_\beta(d)}{h'_{\alpha_0}(c)}\\
  \notag&=M'_{(\alpha_0,c),(\beta,d)}
\end{align}
where we have used both~(\ref{eq:32}) and~(\ref{eq:36}) in the line~(\ref{eq:31}). This proves that, in the transition matrices, the two lines corresponding to the node $(\alpha_0,c)$ are equal. But since $\alpha_0$ was arbitrarily chosen such that $(\alpha_0,c)\in N$, this proves the claim~$(\dag)$.

\medskip
We now complete the proof of the equality $\Delta=\Gamma$. Fix $(\alpha_0,c)\in N$. Let $z$ be an $(\alpha_0,c)$-protection, say of height $\tau=\height(z)$. Using the same technique as in the proof of Proposition~\ref{lem:2}, point~\ref{item:9}, by adding as many letters as one may while not changing the height of~$z$, we assume without loss of generality that $z$ is a maximal element among those of height~$\tau$. It implies that the path in \DSC\ corresponding to $z$ goes through positive nodes only. Since $\Ttilde$ is a terminal component of~$\DSC^+$, the path corresponding to $z$ in $\DSC$ lies within~$\Ttilde$. Put $\beta=\alpha_0\cdot z$, and let $\bigl((\alpha_0,d_1),\ldots,(\alpha_{\tau-1},d_\tau)\bigr)$ be the path in $\DSC^+$ corresponding to~$z$. Let $(Z_i)_{i\geq0}$ denote the Markov chain of states-and-cliques, with $Z_i=(\alpha_i,C_{i+1})$. Then the maximality of $z$ implies:
\begin{gather*}
  \up z=\Bigl\{\xi\in\M_{\alpha_0}\tqs \bigl(Z_0(\xi),\ldots,Z_{\tau-1}(\xi)\bigr)=
  \bigl((\alpha_0,d_1),\ldots,(\alpha_{\tau-1},d_\tau)\bigr)\Bigr\}.
\end{gather*}

Consider $\alpha\in X$ an arbitrary state, and pick $x\in\M_\beta$ such that $\beta\cdot x=\alpha$. Let $\rho=\height(x)$. Then, for any $\xi\in\BM_{\alpha_0}$, one has $zx\leq\xi$ if and only if the truncature of $\xi$ at height $\tau+\rho$, defined by $Y(\xi)=C_1(\xi)\cdots C_{\tau+\rho}(\xi)$ satisfies $zx\leq Y(\xi)$. Therefore $\up(zx)$ decomposes as the following finite disjoint union:
\begin{gather*}
  \up (zx)=\bigcup_{\substack{y\in\M_{\alpha_0}\tqs\\ (\height(y)=\rho+\tau)\wedge (zx\leq y)}}\bigl\{\xi\in\BM_{\alpha_0}\tqs Y(\xi)=y\bigr\}.
\end{gather*}

But each of the subsets $\{\xi\in\BM_{\alpha_0}\tqs Y(\xi)=y\bigr\}$ is an elementary cylinder for the Markov chain of states-and-cliques. Thanks to the result of~$(\dag)$, their probability evaluates thus identically with respect to $\nu_{\alpha_0}$ and with respect to~$\nu'_{\alpha_0}$, henceforth:\quad$\nu_{\alpha_0}\bigl(\up(zx)\bigr)=\nu'_{\alpha_0}\bigl(\up(zx)\bigr)$, which yields $\Delta(\alpha_0,\alpha)=\Gamma(\alpha_0,\alpha)$. Since the state $\alpha$ was chosen arbitrarily, the cocycle property of $\Gamma$ and of $\Delta$ implies thus $\Gamma=\Delta$, which completes the proof.
\end{proof}

\begin{corollary}
  \label{cor:1}
  Let $(\M,X,\bot)$ be a irreducible concurrent system of characteristic root~$r$, and let $\mu=\mu(r)$ be the M\"obius matrix evaluated at~$r$. Then $\dim\bigl(\ker(\mu)\bigr)=1$.
\end{corollary}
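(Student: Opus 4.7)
By Proposition \ref{prop:4} the characteristic root $r$ is a root of $\det\mu(z)$, so $\dim\ker\mu\geq 1$, with explicit generator $u=(\Gamma(\alpha_0,\alpha))_{\alpha\in X}$ (Proposition \ref{prop:9} applied to the Patterson--Sullivan uniform measure of Section \ref{sec:exist-unif-meas}). The work is in the opposite inequality, and my plan is to show that any $v\in\ker\mu$ can be perturbed by a large positive multiple of $u$ into a vector which is itself the fibred valuation of some uniform measure; the uniqueness theorem \ref{thr:4} will then force the perturbed vector, and hence $v$, to be proportional to $u$.

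Given $v\in\ker\mu$, I associate to it a vector on $\ADSC$ mirroring the construction of Lemma \ref{lem:4}: set
\begin{gather*}
W_v(\alpha,c)=\sum_{c'\geq c,\ c'\in\C_\alpha}(-1)^{|c'|-|c|}r^{|c'|}v_{\alpha\cdot c'}\qquad(\alpha\in X,\ c\in\C_\alpha),
\end{gather*}
and $w_v(\alpha,c,i)=r^{1-i}W_v(\alpha,c)$ on $\ADSC$. The key identity is $Fw_v=r^{-1}w_v$, where $F$ is the adjacency matrix of $\ADSC$. I shall verify this by adapting the computation of Lemma \ref{lem:4}: the case $i<|c|$ is trivial, while for $i=|c|$ the identity
\begin{gather*}
\sum_{d\in\Cstar_\beta,\ c\to d}W_v(\beta,d)=r^{-|c|}W_v(\alpha,c)\qquad(\beta=\alpha\cdot c)
\end{gather*}
is obtained by expanding $W_v(\beta,d)$, swapping the order of summation and then using $W_v(\beta,\vd)=(\mu\cdot v)_\beta=0$ to convert the sum over cliques of $\C_\beta$ meeting $D(c)$ into the complementary sum over cliques disjoint from $D(c)$, which after simplification is exactly $r^{-|c|}W_v(\alpha,c)$.

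Once $Fw_v=r^{-1}w_v$ is established, I apply the block decomposition of $F$ used in the proof of Corollary \ref{thr:2}: null nodes cannot lead to positive ones by Proposition \ref{lem:2}, point \ref{item:7}, so the null block $F^0$ is closed, and the null part of the eigenvector equation reads $F^0 w_v^0=r^{-1}w_v^0$. Since the spectral property (Theorem \ref{thr:1}) ensures $\rho(F^0)<r^{-1}$, this forces $w_v^0=0$; in particular $W_v$ vanishes at every null node. Lemma \ref{lem:5} gives $W_u(\alpha,c)=u_\alpha h^u_\alpha(c)>0$ on the finite set $\DSC^+$, so for $\lambda>0$ sufficiently large the perturbation $\widetilde v:=v+\lambda u$ is entrywise strictly positive and satisfies $W_{\widetilde v}=W_v+\lambda W_u>0$ on $\DSC^+$ while still vanishing on null nodes. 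Setting $\widetilde f_\alpha(x)=r^{|x|}\widetilde v_{\alpha\cdot x}/\widetilde v_\alpha$, its M\"obius transform $\widetilde h_\alpha(c)=W_{\widetilde v}(\alpha,c)/\widetilde v_\alpha$ is non-negative on $\Cstar_\alpha$ (strictly positive on $\DSC^+$, zero on null nodes) and $\widetilde h_\alpha(\vd)=0$; these are exactly the conditions for $\widetilde f$ to define a uniform measure with cocycle $\widetilde\Delta(\alpha,\beta)=\widetilde v_\beta/\widetilde v_\alpha$. Theorem \ref{thr:4} then forces $\widetilde\Delta=\Gamma$, so $\widetilde v$ is proportional to $u$, and so is $v=\widetilde v-\lambda u$.

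The main obstacle I anticipate is the verification of the eigenvector identity $Fw_v=r^{-1}w_v$ for an arbitrary (not necessarily positive) $v\in\ker\mu$: the proof of Lemma \ref{lem:4} routes through the cocycle identity \eqref{eq:25}, which implicitly requires $v_\alpha\neq 0$, and must be replaced here by a direct Möbius double-sum manipulation whose single input is $W_v(\beta,\vd)=0$. The remaining ingredients---killing the null part via the spectral property, and packaging the positive perturbation as a uniform measure to apply the uniqueness theorem---are then used essentially off the shelf.
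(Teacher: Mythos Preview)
Your argument is correct and follows essentially the same strategy as the paper's: perturb an arbitrary $v\in\ker\mu$ by the known positive kernel vector $u$ coming from the Parry cocycle, check the M\"obius-transform/normalisation conditions on the perturbed vector, and feed this into the uniqueness machinery. The paper writes $w=u+\varepsilon v$ for small $\varepsilon$ while you write $\widetilde v=v+\lambda u$ for large~$\lambda$; these are the same ray and the difference is cosmetic.

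Where you genuinely diverge is in the handling of null nodes and the endgame. The paper only argues that $h'_\alpha(c)>0$ on $\DSC^+$ for $\varepsilon$ small (via~(\ref{eq:19})), then restricts to a terminal component of $\ADSC^+$, invokes Perron uniqueness there, and replays the proof of Theorem~\ref{thr:4} by hand. You instead prove the eigenvector identity $Fw_v=r^{-1}w_v$ on the \emph{full} $\ADSC$ for every $v\in\ker\mu$ and use the block shape of $F$ together with $\rho(F^0)<r^{-1}$ (Theorem~\ref{thr:1}) to force $W_v\equiv 0$ on all null nodes. This is a clean intermediate fact the paper does not isolate, and it buys you $\widetilde h_\alpha\geq 0$ on \emph{all} of $\Cstar_\alpha$ (not just $\DSC^+$), so you can package $\widetilde f$ as a genuine uniform measure and apply Theorem~\ref{thr:4} off the shelf rather than reproving it.

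One point to tighten: your sentence ``these are exactly the conditions for $\widetilde f$ to define a uniform measure'' is doing real work. The present paper only records $h_\alpha(\vd)=0$ and $h_\alpha\geq 0$ as \emph{necessary} normalisation conditions~(\ref{eq:27}); the converse (that they let you build a probability measure on $\BM_\alpha$ via the $\MCSC$ with $\nu_\alpha(\up x)=\widetilde f_\alpha(x)$) is in the companion reference~\cite{abbes19:_markov}, so cite it explicitly. With that citation in place your route is arguably tidier than the paper's.
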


\begin{proof}
  We already know that  $\dim\bigl(\ker(\mu)\bigr)\geq1$, either from Proposition~\ref{prop:9} \emph{via} the existence of the uniform measure, or more directly \emph{via} Proposition~\ref{prop:4} which says that $\det\mu(r)=0$.

  Seeking a contradiction, assume that  $\dim\bigl(\ker(\mu)\bigr)>1$.   Let $\Gamma:X\times X\to \bbR_{>0}$ be the Parry cocycle, and let $f_\alpha(x)=r^{|x|}\Gamma(\alpha,\alpha\cdot x)$ for $\alpha\in X$ and $x\in\M_\alpha$. Fix $\alpha_0$ an arbitrary state and define the vector $u=(u_\alpha)_{\alpha\in X}$ by $u_\alpha=\Gamma(\alpha_0,\alpha)$. According to Proposition~\ref{prop:9}, $u\in\ker\mu$. Let $v$ be a non zero vector of $\ker\mu$, non proportional to~$u$. Choose $\varepsilon>0$ such that $w=u+\varepsilon v>0$, which exists since $u>0$, and let $\Delta:X\times X\to\bbR_{>0}$ be the cocycle defined by:
  \begin{gather*}
\forall (\alpha,\beta)\in X\times X\quad\Delta(\alpha,\beta)=\frac{w_\beta}{w_\alpha}.
  \end{gather*}

  Note that $(\Delta(\alpha_0,\beta))_{\beta\in X}$ is proportional to $u+\varepsilon v$, hence is not proportional to~$u$. In particular, $\Delta\neq\Gamma$.

  Let $f'=(f'_\alpha)_{\alpha\in X}$ be defined by $f'_\alpha(x)=r^{|x|}\Delta(\alpha,\alpha\cdot x)$. We prove that $f_\alpha=f'_\alpha$ for all $\alpha\in X$, which will contradict the previous observation $\Delta\neq\Gamma$.

Let $h'_\alpha$ be the M\"obius transform of~$f'_\alpha$, and let $h_\alpha$ be the M\"obius transform of~$f_\alpha$. We derive the following expression from a straightforward computation, valid for every $\alpha\in X$ and for every $c\in\C_\alpha$:
\begin{gather}
  \label{eq:19}
  h'_\alpha(c)=
  \frac1{w_\alpha}\Big(\Gamma(\alpha_0,\alpha)h_\alpha(c)+\varepsilon\sum_{c'\in\C_\alpha\tqs c'\geq c}
  (-1)^{|c'|-|c|}r^{|c'|}v_{\alpha\cdot c'}\Bigr).
\end{gather}
This yields in particular:
\begin{gather}
  \label{eq:20}
  \forall\alpha\in X\quad h'_\alpha(\vd)=\frac1{w_\alpha}\Bigl(\Gamma(\alpha_0,\alpha)h_\alpha(\vd)+
  \varepsilon(\mu v)_\alpha\Bigr).
\end{gather}

Since $h_\alpha(\vd)=0$ and since $v\in\ker\mu$, we derive $h'_\alpha(\vd)=0$ from~(\ref{eq:20}). This is enough to insure the identity $h'_\alpha=f'_\alpha g'_\alpha$ where $g'_\alpha$ is defined relatively to $h'_\alpha$ as in~(\ref{eq:26}).

For any positive node $(\alpha,c)$, one has $h_\alpha(c)>0$ according to Lemma~\ref{lem:5}. Therefore,  $h'_\alpha(c)>0$ as well according to~(\ref{eq:19}), maybe after having diminished the value of~$\varepsilon$. In particular, following the same lines as in the proof of Lemma~\ref{lem:4}, one obtains from $h'_\alpha$ a Perron eigenvector of the adjacency matrix of some terminal strongly connected component of $\ADSC^+$. Since Perron eigenvectors are unique up to proportionality, we derive that $h'_\alpha(c)=h_\alpha(c)$ for all nodes $(\alpha,c)$ of the corresponding component of $\DSC^+$.  Following now the line of proof of Theorem~\ref{thr:4}, analyzing the Markov chain of states-and-cliques starting from a given node of this terminal component, we obtain that $f_\alpha=f'_\alpha$ for all $\alpha\in X$, yielding the desired contradiction.
\end{proof}

\subsection{Positive and null nodes of $\DSC$ from a probabilistic point of view}
\label{sec:null-nodes-dsc}

Let $(\M,X,\bot)$ be an irreducible concurrent system. We consider the associated uniform measure $\nu=(\nu_\alpha)_{\alpha\in X}$. The aim of the following result is to give an alternative, probabilistic characterization of positive and of null nodes of $\DSC$. The key ingredient in the proof is the spectral property.

\begin{theorem}
  \label{thr:5}
  Let $(\M,X,\bot)$ be an irreducible concurrent system. Let   $f=(f_\alpha)_{\alpha\in X}$ be the fibred valuation induced by the uniform measure. For each state $\alpha\in X$, let $h_\alpha:\C\to\bbR$ be the M\"obius transform of the function $f_\alpha:\C\to\bbR$.  

Let $(\alpha,c)$ be a node of the \DSC. Then the following statements are equivalent:
  \begin{enumerate}
  \item\label{item:13} $(\alpha,c)$ is a positive node.
  \item\label{item:14} $h_\alpha(c)>0$.
  \end{enumerate}
Furthermore, for every $\alpha\in X$, the \MCSC\ under $\nu_\alpha$ only visits positive nodes of the \DSC.
\end{theorem}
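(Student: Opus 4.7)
The plan is to reduce the equivalence to the spectral property of Theorem~\ref{thr:1}. The implication \ref{item:13}$\Rightarrow$\ref{item:14} is exactly Lemma~\ref{lem:5}, so only the converse requires work. I would exploit the probabilistic identity $h_\alpha(c)=\nu_\alpha(\{\xi\in\BM_\alpha\tqs C_1(\xi)=c\})$, recalled in the proof of Lemma~\ref{lem:5}, and show that this probability vanishes whenever $(\alpha,c)$ is a null node.

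The first step is to establish the set-theoretic inclusion $\{C_1=c\}\subseteq\bigcup_{a\in\Sigma}Y^a_\alpha$, where $Y^a_\alpha=\{\xi\in\BM_\alpha\tqs\xi\text{ contains no occurrence of the letter }a\}$. Fix a null node $(\alpha,c)$ and any $\xi\in\BM_\alpha$ with $C_1(\xi)=c$. For each $n\geq1$ the prefix $x_n=C_1(\xi)\cdots C_n(\xi)\in\M_\alpha$ satisfies $C_1(x_n)=c$, so by Proposition~\ref{lem:2}, point~\ref{item:10}, there is a letter $a_n\in\Sigma$ with $x_n\in\M^{a_n}_\alpha$. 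Since $\Sigma$ is finite, some letter $a$ satisfies $a_n=a$ for arbitrarily large~$n$, and since $(x_n)_n$ is non-decreasing in the prefix order of~$\M$, every $x_n$ is then a prefix of some $x_{n_k}\in\M^a_\alpha$ and hence itself avoids~$a$. Therefore $\xi\in Y^a_\alpha$, as claimed.

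The key step, where the spectral property enters, is to show $\nu_\alpha(Y^a_\alpha)=0$ for every letter~$a$. For any $\xi\in Y^a_\alpha$ and any $n\geq1$, the prefix $x_n$ belongs to $\M^a_\alpha$, has $\height(x_n)=n$ and satisfies $\xi\in\up x_n$, so
$$Y^a_\alpha\subseteq\bigcup_{x\in\M^a_\alpha,\ \height(x)=n}\up x.$$
Using the formula $\nu_\alpha(\up x)=r^{|x|}\Gamma(\alpha,\alpha\cdot x)$ from Theorem~\ref{thr:4} and the bound $\Gamma_\infty=\max_{X\times X}\Gamma<\infty$ (the cocycle $\Gamma$ is defined on the finite set $X\times X$), one obtains
$$\nu_\alpha(Y^a_\alpha)\leq\Gamma_\infty\sum_{x\in\M^a_\alpha,\ \height(x)\geq n}r^{|x|}.$$
The right-hand side is a tail of the series $G^a_\alpha(r)=\sum_{x\in\M^a_\alpha}r^{|x|}$, which converges because, by the spectral property of Theorem~\ref{thr:1}, the radius of convergence $r^a$ of $G^a_\alpha$ strictly exceeds~$r$. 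Letting $n\to\infty$ yields $\nu_\alpha(Y^a_\alpha)=0$, whence $h_\alpha(c)=0$, completing the equivalence.

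The final assertion on the \MCSC\ is then immediate from the equivalence just proved together with the form~(\ref{eq:30}) of the transition matrix: the initial distribution $\delta_{\{\alpha\}}\otimes h_\alpha$ assigns zero mass to null nodes, and $M_{(\alpha,c),(\beta,d)}$ is proportional to $h_\beta(d)$, so any transition into a null node also has probability zero; thus the \MCSC\ almost surely stays in~$\DSC^+$. The main obstacle is the tail estimate in the third paragraph, which depends crucially on the strict inequality $r^a>r$ furnished by Theorem~\ref{thr:1}; everything else is bookkeeping around the probabilistic identity $h_\alpha(c)=\nu_\alpha(C_1=c)$ and the cocycle formula for the uniform measure.
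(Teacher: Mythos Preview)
Your proof is correct and follows essentially the same route as the paper: invoke Lemma~\ref{lem:5} for the forward implication, then for the converse use Proposition~\ref{lem:2}, point~\ref{item:10}, to trap $\{C_1=c\}$ inside the set of infinite executions avoiding some letter, and conclude via the spectral property (Theorem~\ref{thr:1}) that this set has $\nu_\alpha$-measure zero. The only cosmetic differences are that the paper covers $\BM^a_\alpha$ by cylinders indexed by \emph{length} and bounds $\nu_\alpha(\BM^a_\alpha)\leq K\,\card\M^a_\alpha(n)\,r^n\to0$ directly, whereas you cover by \emph{height} and phrase the estimate as a tail of the convergent series $G^a_\alpha(r)$; and your pigeonhole passage from finite prefixes to the infinite execution is spelled out more carefully than in the paper, which simply asserts the inclusion.
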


\begin{proof}
  $\ref{item:13}\implies\ref{item:14}$.\quad This is the topic of Lemma~\ref{lem:5}.
  $\ref{item:14}\implies\ref{item:13}$.\quad We proceed by contraposition. Let $(\alpha,c)$ be a null node; we prove that $h_\alpha(c)=0$. It follows from Proposition~\ref{lem:2}, point~\ref{item:10}, that:
  \begin{gather*}
    \bigl\{\xi\in\BM_\alpha\tqs C_1(\xi)=c\bigr\}\subseteq \bigcup_{a\in \Sigma}\BM_\alpha^a,
  \end{gather*}
where $\M^a=\langle\Sigma\setminus\{a\}\rangle$ and $\BM^a$ denotes the boundary at infinity of~$\M^a$. Using the property $\nu_\alpha(C_1=c)=h_\alpha(c)$, it is thus enough to prove:
\begin{gather*}
  \forall a\in\Sigma\quad\nu_\alpha\bigl(\BM_\alpha^a\bigr)=0.
\end{gather*}

For this, let $a\in\Sigma$. Let $r$ denote the characteristic root of the concurrent system $(\M,X,\bot)$. For each integer $n\geq0$, one has: $\BM^a\subseteq\bigcup_{x\in\M^a_\alpha(n)}\up x$ and therefore:
\begin{align}
  \label{eq:37}
  \nu_\alpha(\BM^a)\leq\sum_{x\in\M^a_\alpha(n)}\nu_\alpha(\up x)\leq
  K\card\M_\alpha^a(n) r^n
\end{align}
where $K$ is a bound of the Parry cocycle. According to Theorem~\ref{thr:1}, the irreducible concurrent system $(\M,X,\bot)$ satisfies the spectral property. Hence, passing to the limit~(\ref{eq:37}) when $n\to\infty$ yields $\nu_\alpha(\BM^a)=0$, which was to be proved.

The last statement of the theorem now follows from the observation already made that the \MCSC\ only visits nodes such that $h_\alpha(c)>0$.
\end{proof}

\subsection{Further investigations on the uniform measure and on null nodes}
\label{sec:an-alternative-point}

\subsubsection{Finite uniform distributions.}
\label{sec:finite-unif-distr}

Let $(\M,X\bot)$ be a concurrent system, that we assume to be non trivial and irreducible. In particular, for each state $\alpha\in X$ and for each integer $n\geq0$, the finite set $\M_\alpha(n)$ is non empty. Therefore the uniform distribution on $\M_\alpha(n)$ is well defined, let us denote it by~$\nu_\alpha(n)$. A natural question is to elucidate, for each state $\alpha\in X$, whether the sequence $(\nu_\alpha(n))_{n\geq0}$ converges as $n\to\infty$. Indeed, one sees each finite distribution $\nu_\alpha(n)$ as a probability measure on the \emph{compact} space~$\Mbar$.

Finite executions in $\M_\alpha(n)$ correspond to some of the paths of length $n$ in the digraph $\ADSC$, those with some constraints on the initial and final node---see Proposition~\ref{prop:5}. Accordingly, the finite distribution $\nu_\alpha(n)$ corresponds to a finite distribution, say~$\nutilde_\alpha(n)$, on a subset of the paths of length $n$ in~$\ADSC$.

Recall that $\ADSC$ has no reason to be strongly connected in general, nor aperiodic. Studying the convergence of $(\nutilde_\alpha(n))_{n\geq0}$ amounts thus in studying the convergence of finite uniform distributions on general finite graphs. Based on the existing literature on reducible non negative matrices, see \eg\ \cite{rothblum14} and the references therin, it is possible to prove the following result\footnote{Recall that a strongly connected component of a digraph is called \emph{basic} if its own spectral radius equals the spectral radius of the digraph.}, as we shall do in a forthcoming paper: \emph{the sequence $(\nutilde_\alpha(n))_{n\geq0}$ converges weakly as $n\to\infty$ toward a probability measure on infinite paths in $\ADSC$, corresponding to a Markov chain on $\ADSC$. Restricted to the nodes visited with positive probability by this Markov chain, the digraph has the property that its basic components coincide with its terminal components.}

The reduced digraph of the \ADSC---\ie, the digraph of its irreducible components---can thus be represented in a schematic way as in Figure~\ref{fig:pqojwdpaz}. The limit measure corresponds also to the uniform measure on infinite executions introduced earlier in the paper; this justifies \emph{a posteriori} our definition of the uniform measure. The limit Markov chain on \ADSC\ is nothing but the Markov chain of states-and-cliques, and the sub-digraph of $\ADSC$ visited with positive probability is~$\ADSC^+$.

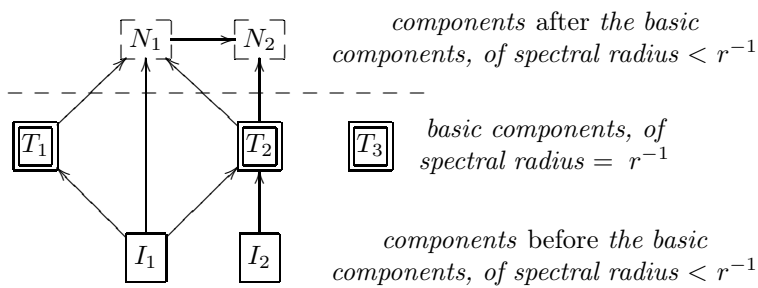
\begin{figure}
$$
  \xymatrix{
    &*+[F--]{N_1}\ar[r]&*+[F--]{N_2}&{\ }&
    &{}\save[]*\txt<18em>{\itshape components \emph{after} the basic components, of spectral radius $<r^{-1}$}\restore\\
    *+[F=]{\strut T_1}\ar[ur]
        \save\POS[]+<-1em,2em>\ar@{--}[rrr]+<2em,2em>
    \restore
&&*+[F=]{\strut T_2}\ar[u]\ar[ul]&*+[F=]{\strut T_3}&
&{}\save[]*\txt<18em>{\itshape basic components, of spectral radius $=r^{-1}$}\restore\\
&    *+[F]{\strut I_1}\ar[ul]\ar[ur]\ar[uu]&*+[F]{\strut I_2}\ar[u]
&&&{}\save[]*\txt<18em>{\itshape components \emph{before} the basic components, of spectral radius $<r^{-1}$}\restore
}
$$
  \caption{Reduced digraph of \ADSC\ and nodes reached by the limit of finite uniform distributions. The Markov chain limit of finite uniform distributions does not cross the long dashed line.}
  \label{fig:pqojwdpaz}
\end{figure}

Incidently, variants of this result in the same algebraic vein also provide a proof of uniqueness of the uniform measure. The identification with the uniform measure studied earlier in the paper provides yet another characterization of null nodes if the system is irreducible; whence the following result, completing Theorem~\ref{thr:5} in the light of the previous result (anounced without proof).

\begin{theorem}
\label{thr:3}
Let $(\M,X,\bot)$ be an irreducible and non trivial concurrent system, and let $(\alpha,c)$ be a node of the \ADSC. Then the folloing properties are equivalent:
\begin{enumerate}[i)]
\item $(\alpha,c)$ is a null node.
\item $h_\alpha(c)=0$.
\item\label{item:11} The irreducible component of\/ $(\alpha,c)$ can be reached from at least one basic component of the \ADSC.
\end{enumerate}
\end{theorem}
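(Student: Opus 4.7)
The equivalence (i)$\iff$(ii) is already the content of Theorem~\ref{thr:5}, so my plan centres on~(ii)$\iff$(iii). The backbone is the eigenvector of Lemma~\ref{lem:4} extended to the full $\ADSC$. Fix an arbitrary state $\alpha_0\in X$ and consider
\begin{gather*}
u(\alpha,c,i) = \frac{\Gamma(\alpha_0,\alpha)\,h_\alpha(c)}{r^{i-1}},
\end{gather*}
where $\Gamma$ is the Parry cocycle from Theorem~\ref{thr:4} and $h_\alpha$ is the M\"obius transform of $f_\alpha(x)=r^{|x|}\Gamma(\alpha,\alpha\cdot x)$. By Lemma~\ref{lem:5}, $u>0$ on~$\ADSC^+$, and by Theorem~\ref{thr:5}, $u=0$ on null nodes; hence the support of $u$ coincides with~$\ADSC^+$.

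Next I would verify that $u$ is a non-negative right $r^{-1}$-eigenvector of the adjacency matrix~$F$ of the \emph{full}~$\ADSC$. The case $i<|c|$ is automatic since the only successor of $(\alpha,c,i)$ is $(\alpha,c,i+1)$. For a positive $(\alpha,c,|c|)$, the eigenvector identity is exactly the content of Lemma~\ref{lem:4}. The delicate case is a null~$(\alpha,c,|c|)$: by point~\ref{item:7} of Proposition~\ref{lem:2}, every successor $(\beta,d)$ of $(\alpha,c)$ in~$\DSC$ is itself null, so $h_\beta(d)=0$ by Theorem~\ref{thr:5}; both $u(\alpha,c,|c|)$ and $(Fu)(\alpha,c,|c|)$ then vanish, and the identity $Fu=r^{-1}u$ holds throughout~$\ADSC$.

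With this eigenvector in hand, I would invoke the Rothblum theory of non-negative eigenvectors of reducible non-negative matrices (cf.\ \cite{rothblum14}): the support of such an eigenvector is precisely the set of nodes of~$\ADSC$ from which some basic SCC of~$F$ (i.e.\ an SCC of spectral radius $r^{-1}$) is reachable. Combined with the identification of the support of $u$ with~$\ADSC^+$ already obtained, this gives the dichotomy that a node $v$ is positive if and only if $v$ can reach some basic SCC, and equivalently $v$ is null if and only if $v$ reaches no basic SCC.

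It remains to translate ``reaches no basic SCC'' into ``is reached from some basic SCC''. The direction~(iii)$\Rightarrow$(ii) is the easy one: if the component of~$(\alpha,c)$ is strictly downstream of a basic~$B$ and $(\alpha,c)$ were positive, then, since null is closed under successors, the connecting path would have to remain in~$\ADSC^+$, contradicting the terminality of $B$ in $\ADSC^+$ stated by Proposition~\ref{prop:10}; hence $(\alpha,c)$ is null. The converse is the main obstacle: a null node must lie strictly downstream of some basic SCC. This amounts to excluding any null SCC from sitting ``parallel'' to the basic strata---i.e.\ to establishing the block picture announced in Figure~\ref{fig:pqojwdpaz}---and should be handled via the analysis of the weak limit of the finite uniform distributions $\nutilde_\alpha(n)$ outlined in Section~\ref{sec:finite-unif-distr}, whose support, shown there to coincide with the basic SCCs and their ancestors precisely under the irreducibility hypothesis, forces every node outside that support to be a strict descendant of some basic SCC.
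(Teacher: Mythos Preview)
The paper does not actually prove Theorem~\ref{thr:3}: it is explicitly ``announced without proof'' in Section~\ref{sec:finite-unif-distr}, as a consequence of the convergence of the finite uniform distributions~$\nutilde_\alpha(n)$, itself deferred to a forthcoming paper. So there is no in-paper proof to compare your proposal against; your write-up is in fact \emph{more} detailed than what the paper offers.

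Your treatment of (i)$\iff$(ii) via Theorem~\ref{thr:5}, the extension of the eigenvector~$u$ to all of $\ADSC$, and the argument for (iii)$\Rightarrow$(i) via Proposition~\ref{prop:10} are all correct and match the spirit of the paper's announced approach. One imprecision: the Rothblum statement you invoke---that the support of a non-negative $\rho$-eigenvector is \emph{precisely} the set of nodes reaching a basic SCC---is not true for arbitrary such eigenvectors when there are several basic classes (the support can be any suitable ``ideal'' generated by a subset of basic classes). You do not actually need that general statement: you already know $\mathrm{supp}(u)=\ADSC^+$ by construction and Theorem~\ref{thr:5}, and the identification $\ADSC^+=\{\text{nodes reaching some basic SCC}\}$ follows directly from Proposition~\ref{prop:10} together with the observation (via Proposition~\ref{lem:2}, point~\ref{item:9}) that every positive node has a positive successor in~$\DSC$.

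The genuine gap you identify---(i)$\Rightarrow$(iii), i.e.\ that every null SCC lies strictly downstream of some basic SCC---is exactly the piece the paper also leaves open and delegates to the forthcoming work on weak limits of~$\nutilde_\alpha(n)$. Nothing in the present paper suffices to rule out a null component reachable only from non-basic positive components, so your deferral is appropriate and consistent with the paper's own stance.
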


\subsubsection{Computability of null nodes.}
\label{sec:comp-null-nodes}

The mere definition of null nodes (Definition~\ref{def:10}) does not provide an obvious operational way of determining them. The characterization of $(\alpha,c)$ being a null node if and only if $h_\alpha(c)=0$ given by Theorem~\ref{thr:5} is more operational. Yet, the straightfoward application of Theorem~\ref{thr:5} to detect null nodes assumes an  exact computation of the characteristic root~$r$ of the concurrent system. For this, one can rely on the use of symbolic computation softwares.  But they are soon overtaken when the size of the system grows. Obviously, and as the expriments reveal, numerical computation softwares can handle systems of larger size, but the latter cannot be used to detect a property of the form $h_\alpha(c)=0$. Indeed, computation errors lead in general to non zero values for $h_\alpha(c)$, even if $(\alpha,c)$ is acutally a null node.

Hence symbolic computation cannot be used in practice for large systems, and numerical computations are intrinsically ruled out for the task we are trying to handle.

It is thus remarkable that the last characterization~{\itshape\ref{item:11})} provided by Theorem~\ref{thr:3} actually puts numerical computations back in the game, as follows. After having determined the spectral radius $r^{-1}$ of \ADSC, with some given precision, it is possible to determine numerically the spectral radius of all other components, so as to be sure whether they have a larger or a smaller spectral radius. By this way, all null nodes are determined. This method however  may fail if the \ADSC\ has several basic components. 

If the system has only one basic component, this can be proved by numerical computations. But if the system has several basic components, numerical computations cannot prove it in general.  This property is thus numerically half decidable. In practice, most of the systems one encounters only have one basic component. But  it would be interesting to find sufficient conditions insuring the uniqueness of the basic component of \ADSC.

\subsection{Examples continued}
\label{sec:examples-continued}

\subsubsection{Direct method}
\label{sec:safe-petri-nets-1}

Let $(\M,X,\bot)$ be the irreducible concurrent system introduced in Figure~\ref{fig:elemeytartas}. We have already obtained that the characteristic root is $r=1/2$. The M\"obius matrix evaluated at~$r$, $\mu=\mu(r)$, is thus:
\begin{gather*}
  \mu=\begin{array}{c}
        \alpha_0\\\alpha_1
      \end{array}
      \begin{pmatrix}
        1/4&-1/4\\
        -1/2&1/2
  \end{pmatrix}
\end{gather*}
with kernel generated by $\left(\begin{smallmatrix}  1\\1 \end{smallmatrix}\right)$. Thanks to Proposition~\ref{prop:9}, we deduce that the Parry cocycle $\Gamma$ is identically equal to~$1$. The values of the corresponding fibred valuation $f_\alpha$ and of the M\"obius transform on cliques are given in Table~\ref{tab:oetyriouas}. The interesting point to notice is the following: of course $h_{\alpha_0}(c)=0$ since $c$ is simply not enabled at state~$\alpha_0$. But, less trivially, although $d$ is enabled at~$\alpha_0$, the computation yields $h_{\alpha_0}(d)=0$, which shows directly that $(\alpha_0,d)$ is a null node by Theorem~\ref{thr:5}, contraposition of $\ref{item:13}\implies\ref{item:14}$. Alternatively, we knew that $h_{\alpha_0}(d)=0$ in advance by Theorem~\ref{thr:5}, contraposition of $\ref{item:14}\implies\ref{item:13}$, since it was easy to verify that $(\alpha_0,d)$ is a null node on Definition~\ref{def:10}.

The key point is to notice the difference between $f_{\alpha_0}(d)=1/2>0$ on the one hand, and $h_{\alpha_0}(d)=0$ on the other hand. The first one means that an infinite execution $\xi$ starting from $\alpha_0$ has probability $1/2$ to carry $d$ within its first clique, \ie, $d\leq C_1(\xi)$; whereas the second one means that this same execution has probability $0$ to have $C_1(\xi)=d$. Indeed, since $b$ and $d$ are concurrent, that would imply that $b$ is never used, hence $\xi=(dd\ldots)$, which meets the intuition of an event of probability~$0$.

\begin{table}
$$  \begin{array}{lcccccc}
      \text{clique}    &a&b&c&d&ad&bd\\
      \hline
      f_{\alpha_0}&r=0.5&r=0.5&0&r=0.5&r^2=0.25&r^2=0.25\\
      h_{\alpha_0}&r-r^2=0.25&r-r^2=0.25&0&\fbox{$r-2r^2=0$}&r^2=0.25&r^2=0.25\\
      f_{\alpha_1}&0&0&r=0.5&r=0.5&0&0\\
      h_{\alpha_1}&0&0&r=0.5&r=0.5&0&0
  \end{array}
  $$
  \caption{\small Fibred valuation $f_\alpha$ and its M\"obius transforms $h_\alpha$ for the example of Figure~\ref{fig:elemeytartas}. The framed entry corresponds to the computation $h_{\alpha_0}(d)=0$, showing that $(\alpha_0,d)$ is a null node.}
  \label{tab:oetyriouas}
\end{table}

The transition matrix of the Markov chain of states-and-cliques on $\DSC^+$ is the following:
\begin{gather*}
  M=
  \begin{array}{c}
    (\alpha_0,a)\\(\alpha_0,b)\\(\alpha_0,ad)\\(\alpha_0,bd)\\(\alpha_1,c)\\(\alpha_1,d)
  \end{array}
  \begin{pmatrix}
    .5&.5&0&0&0&0\\
    0&0&0&0&1&0\\
    .25&.25&.25&.25&0&0\\
    0&0&0&0&.5&.5\\
    .25&.25&.25&.25&0&0\\
    0&0&0&0&.5&.5
  \end{pmatrix}
\end{gather*}


\subsubsection{Alternative method: the tiling example}
\label{sec:tiling-example}

Consider the concurrent system $(\M,X,\bot)$ associated with the tilings of the Aztec diamond of order $2$ described in Section~\ref{sec:tilings}. Instead of directly determining the determinant of the M\"obius matrix, we use the notion of uniform measure to determine the characteristic root of the system, together with the Parry cocycle.

Let $\nu=(\nu_\alpha)_{\alpha\in X}$ denote the uniform measure, and let $f_\alpha(x)=r^{|x|}\Gamma(\alpha,\alpha\cdot x)$ be the induced fibred valuation. We first claim that $f_0(a)=1$. Indeed, if an infinite execution $\xi\in\M_0$ does not satisfy $a\leq\xi$, then $C_1(\xi)=b$ and therefore the Markov chain of states-and-cliques enters the null node $(0,b)$. But this occurs with $\nu_0$-probability~$0$, and therefore $\xi\geq a$ with $\nu_0$-probability~$1$. In other words, $f_0(a)=1$, as claimed. Hence $\Gamma(0,1)=1/r$. For the same reasons, $f_0(b)=1$ and $f_0(ab)=1$, whence $\Gamma(0,2)=1/r$ and $\Gamma(0,3)=1/r^2$. We deduce the values of $\Gamma(3,1)$, $\Gamma(3,2)$ and $\Gamma(3,0)$ by the cocycle property. Collecting the different values, we have thus:
\begin{align*}
  \Gamma(0,1)&=1/r&\Gamma(1,0)&=r&\Gamma(0,2)&=1/r&\Gamma(2,0)&=r\\
  \Gamma(3,1)&=r&\Gamma(3,2)&=r&\Gamma(3,0)&=r^2&\Gamma(1,2)&=1
\end{align*}
For symmetry reasons, one has $\Gamma(3,3')=\Gamma(3',3)$, and with the cocycle property this implies $\Gamma(3,3')=1$. Writing down the identity $h_3(\vd)=0$ (a particular instance of~(\ref{eq:27})) yields:
\begin{gather*}
  1-r\Gamma(3,1)-r\Gamma(3,2)-r\Gamma(3,3')+r^2\Gamma(3,0)=0,
\end{gather*}
and thus $r$ is a root of the polynomial $P(z)=1-z-2z^2+z^4$. This polynomial has a unique root in $(0,1)$, so $r$ is this root: $r\approx0.525 $.

So for instance, if one wants to know what is the law of the first clique of an infinite execution under the uniform measure, when the tiling starts from the state~$1$, the answer is given by $\bigl(h_1(a),h_1(b),h_1(ab)\bigr)$. We already know that $h_1(a)=0$ since $(1,a)$ is a null node, which is recovered through a direct computation, and the two other values are given below.
\begin{align*}
  h_1(a)&=f_1(a)-f_1(ab)=r\Gamma(1,0)-r^2\Gamma(1,2)=0\\
  h_1(b)&=f_1(b)-f_1(ab)=r\Gamma(1,3)-r^2\Gamma(1,2)=1-r^2\approx 0.725\\
  h_1(ab)&=f_1(ab)=r^2\Gamma(1,2)=r^2\approx0.275
\end{align*}

\bibliographystyle{plain}
\bibliography{biblio.bib}

\end{document}